\numberwithin{equation}{section}
  \theoremstyle{plain}
  \newtheorem*{conjecture*}{\protect\conjecturename}
  \theoremstyle{remark}
  \newtheorem*{rem*}{\protect\remarkname}
\theoremstyle{plain}
\newtheorem{thm}{\protect\theoremname}[section]
  \theoremstyle{remark}
  \newtheorem{rem}[thm]{\protect\remarkname}
  \theoremstyle{plain}
  \newtheorem{cor}[thm]{\protect\corollaryname}
  \theoremstyle{plain}
  \newtheorem{lem}[thm]{\protect\lemmaname}
  \theoremstyle{definition}
  \newtheorem{example}[thm]{\protect\examplename}
  \theoremstyle{definition}
  \newtheorem{defn}[thm]{\protect\definitionname}
  \theoremstyle{plain}
  \newtheorem{prop}[thm]{\protect\propositionname}
  \theoremstyle{plain}
  \newtheorem{fact}[thm]{\protect\factname}
  \theoremstyle{remark}
  \newtheorem{claim}[thm]{\protect\claimname}
\date{}
\theoremstyle{plain}
\newtheorem{mythm}{\protect\theoremname}
\newtheorem{thmrpt}{\protect\theoremname}
\renewcommand*{\thethmrpt}{\Alph{thmrpt}}
\let\stdpart\part
\renewcommand*{\part}{\clearpage\stdpart}
\DeclareMathOperator{\pf}{Pf}
\DeclareMathOperator{\lcm}{lcm}
  \providecommand{\claimname}{Claim}
  \providecommand{\conjecturename}{Conjecture}
  \providecommand{\corollaryname}{Corollary}
  \providecommand{\definitionname}{Definition}
  \providecommand{\examplename}{Example}
  \providecommand{\factname}{Fact}
  \providecommand{\lemmaname}{Lemma}
  \providecommand{\propositionname}{Proposition}
  \providecommand{\remarkname}{Remark}
\providecommand{\theoremname}{Theorem}
\begin{document}

\global\long\def\A{\mathcal{A}}
\global\long\def\B{\mathcal{B}}
\global\long\def\W{\mathcal{W}}
\global\long\def\Y{\mathcal{Y}}
\global\long\def\V{\mathcal{V}}
\global\long\def\G{\mathcal{G}}

\global\long\def\solspace{\mathcal{H}}

\global\long\def\ZZ{\mathbb{Z}}
\global\long\def\NN{\mathbb{N}}
\global\long\def\KK{\mathbb{K}}
\global\long\def\CC{\mathbb{C}}
\global\long\def\RR{\mathbb{R}}

\global\long\def\F{\RR}
\global\long\def\Af{\mathbb{A}}

\global\long\def\z{\zeta}
\global\long\def\e{\epsilon}
\global\long\def\a{\alpha}
\global\long\def\b{\beta}
\global\long\def\ph{\varphi}
\global\long\def\ga{\gamma}
\global\long\def\dl{\delta}

\global\long\def\l{\ell}

\global\long\def\perm{\mbox{perm}}
\global\long\def\lcm{\mbox{lcm}}
 \global\long\def\haf{\mbox{hf}}
\global\long\def\pitz{\mathbf{P}}

\global\long\def\var{{\cal Y}}
\global\long\def\lvar{{\cal W}}
\global\long\def\inpol{\Delta}
\global\long\def\odd#1{{#1}^{\ast}}

\global\long\def\vpf{\mathcal{F}}
\global\long\def\vdet{{\cal D}}
\global\long\def\vrec{{\cal R}}
\global\long\def\vper{{\cal P}}
\global\long\def\vhf{\mathcal{H}}
\global\long\def\vquad{{\cal Q}}
\global\long\def\vgen{\mathcal{X}}

\global\long\def\asym{\mathscr{A}}
\global\long\def\sym{\mathscr{S}}
\global\long\def\con{{\cal C}}

\global\long\def\matA{a}
 \global\long\def\matB{b}

\title{Prime Points in Orbits: 
Some Instances \\ of the Bourgain-Gamburd-Sarnak Conjecture}

\author{Tal Horesh and Amos Nevo}
\maketitle
\begin{abstract}
We use Vaughan's variation on Vinogradov's three-primes theorem to
prove Zariski-density of prime points in several infinite families
of hypersurfaces, including level sets of some quadratic forms, the Permanent
polynomial, and the defining polynomials of some pre-homogeneous vector spaces. Three of these families are instances of a conjecture
by Bourgain, Gamburd and Sarnak regarding prime points in orbits of
simple algebraic groups. Our approach is based on the formulation of a general condition on the defining polynomial of a hypersurface, which suffices to guarantee  that Zariski-density
of prime points is equivalent to the existence of an odd point.
\end{abstract}
\tableofcontents{}

\section{Introduction and statement of the main results\label{sec: Introduction and examples}}

\textit{Dirichlet's Theorem on Arithmetic Progressions} states that
if two integers $\a$ and $\b$ are co-prime, then there are infinitely
many primes in the arithmetic progression $\a,\,\a+\b,\,\a+2\b,...$.
In other words, there are infinitely many primes that are congruent
to $\a$ modulo $\b$. In his survey \cite{Sarnak_lecture}, Sarnak
has suggested a new perspective on Dirichlet's theorem: the additive
linear algebraic group $\mathbb{G}_{a}$ over $\RR$ acts on the space
$\Af^{1}=\RR^{1}$ by translations, namely $g\cdot x=x+g$ for $x\in\Af^{1}$
and $g\in\mathbb{G}_{a}$. Write $\mathbb{G}_{a}\left(\ZZ\right)$
for the lattice subgroup of integer points in $\mathbb{G}_{a}$, and
$\Af^{1}\left(\ZZ\right)$ for the set of integer points in $\Af^{1}$.
As sets, both $\mathbb{G}_{a}\left(\ZZ\right)$ and $\Af^{1}\left(\ZZ\right)$
can be identified with $\ZZ$, and both $\mathbb{G}_{a}$ and
$\Af^{1}$ can be identified with $\RR$. $\mathbb{G}_{a}\left(\ZZ\right)$
also acts on $\Af^{1}$ by translations, and so does any cyclic subgroup
$\left\langle \b\right\rangle =\b\ZZ$ of $\mathbb{G}_{a}\left(\ZZ\right)$.
For $\a\in\Af^{1}\left(\ZZ\right)$ and $\b\in\mathbb{G}_{a}\left(\ZZ\right)$,
the orbit $\left\langle \b\right\rangle \cdot\a$ is simply the arithmetic
progression $\a+\b\ZZ$. Since in $\Af^{1}$ a set is Zariski-dense
if and only if it is infinite, we see that Dirichlet's theorem can
be stated as follows: the set of prime points in an orbit $\left\langle \b\right\rangle \cdot\a$
is Zariski-dense in $\Af^{1}$ if and only if $\gcd\left(\a,\b\right)=1$.

In recent years, several questions pertaining to Zariski-density of
prime points in orbits of the group of integral points of an algebraic
group defined over $\mathbb{Q}$ have been formulated. These questions
continue the longstanding tradition of questions regarding prime values of  algebraic curves. An example for the latter is the Euler conjecture,
which concerns prime values of the quadratic function $x^{2}+1$, or the twin prime
conjecture, which concerns values of the quadratic function $x(x+2)$ with exactly two prime factors.
Concerning the case where the algebraic varieties are orbits of algebraic
groups, Bourgain, Gamburd and Sarnak have formulated some conjectures,
one of which is the following \cite[Conj. 1.4]{BGS_10} (see also
\cite{Sarnak_lecture}):
\begin{conjecture*}[Bourgain, Gamburd, Sarnak]
Let $G\subset GL_{n}\left(\F\right)$ be the group of real points of an algebraically connected, algebraically simply  connected, absolutely almost simple
linear algebraic group defined over $\mathbb{Q}$. Let $\Lambda=\mathcal{O}$ be a subgroup of $G\left(\ZZ\right)=G\cap GL_{n}\left(\ZZ\right)$
which is Zariski-dense in $G$, and $f$ be a non-zero polynomial
 in the coordinate ring $\mathbb{Q}[G]$. Assume that $f$ is not a unit, that it assumes integral values on $\mathcal{O}$, and that it factors into $k$ irreducibles in the coordinate ring $\mathbb{Q}[G]$.
Denote by $r_{0}\left(\mathcal{O},f\right)$ the least $r$ such that
the set of $x\in\mathcal{O}$ for which $f\left(x\right)$
has at most $r$ prime factors, is Zariski-dense in $G$, the Zariski-closure
of $\mathcal{O}$. Assume that for every integer $q\geq2$, there
exists $x\in\mathcal{O}$ such that $\gcd\left(f\left(x\right),q\right)=1$
(such a pair $\left(\mathcal{O},f\right)$ is called \emph{primitive}).
Then $r_{0}\left(\mathcal{O},f\right)=k$.\end{conjecture*}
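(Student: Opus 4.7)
The conjecture as stated is widely open in this generality, so I will sketch an attack on the instances the paper actually treats, which (per the abstract) reduce the primality question to Vinogradov's three primes theorem via a general condition on the defining polynomial. First I would reduce to the irreducible case $k=1$: writing $f = f_{1}\cdots f_{k}$, the lower bound $r_{0}(\mathcal{O},f)\geq k$ is essentially automatic since each $f_{i}$ is non-constant, so outside a proper subvariety $|f_{i}(x)|\geq 2$; the harder upper bound $r_{0}\leq k$ will follow once each $f_{i}$ is shown to attain prime values on a Zariski-dense set, provided the arguments can be arranged jointly. Hence the real task is: for an irreducible $f\in\mathbb{Q}[G]$ with $(\mathcal{O},f)$ primitive, show that $\{x\in\mathcal{O}:f(x)\text{ is prime}\}$ is Zariski-dense in $G$.

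The technical heart, and the paper's advertised device, is an \emph{affine linearization} condition. I would look for an algebraic family of three-parameter affine slices $L_{t}\subset\mathcal{O}$, parameterized by $t$ in some auxiliary base variety, along which $f$ restricts to an affine linear form
\[
f|_{L_{t}}(n_{1},n_{2},n_{3}) = a_{1}(t)n_{1} + a_{2}(t)n_{2} + a_{3}(t)n_{3} + c(t),
\]
with integer coefficients, and such that the primitivity of $(\mathcal{O},f)$ translates into the arithmetic coprimality hypothesis required by Vaughan's quantitative version of Vinogradov's three primes theorem. Since the abstract asserts that Zariski-density will be equivalent to the existence of a single odd point in $\mathcal{O}$, I would in fact arrange the parametrization so that an odd point yields a triple with $a_{i}(t)$ and parity admissible at the prime $2$; Vaughan's theorem then supplies primes $p_{1},p_{2},p_{3}$ with $a_{1}p_{1}+a_{2}p_{2}+a_{3}p_{3} = N - c(t)$ for every sufficiently large odd $N$ in the correct residue class, yielding points $x\in L_{t}\subset\mathcal{O}$ with $f(x) = N$ prime.

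Third, to upgrade ``infinitely many prime values on each slice'' to Zariski-density of the prime locus in $G$, I would verify that $\bigcup_{t} L_{t}$ is itself Zariski-dense in $G$ by construction, and then invoke the standard principle that a proper closed subvariety $V\subsetneq G$ intersects a generic slice $L_{t}$ in a proper closed subset, hence in only finitely many of the infinitely many prime points produced on $L_{t}$; varying $t$ therefore forces the prime locus to escape every such $V$.

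The main obstacle is step two: verifying the affine linearization condition for each specific family---level sets of certain quadratic forms, the permanent polynomial, and the defining polynomials of the pre-homogeneous vector spaces considered. Each case requires an explicit parametrization compatible with the algebraic structure of the orbit, together with a verification that the coefficients $a_{i}(t)$ and the residue class of $c(t)$ satisfy the global arithmetic hypotheses of Vaughan's theorem. Matching primitivity at $p=2$ via the ``odd point'' is elementary, but securing the hypothesis at all primes $q\geq 2$---that is, packaging the primitivity assumption of the conjecture into a uniform arithmetic input for the circle method---is where the work lies and must be done case by case.
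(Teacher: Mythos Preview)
The statement is a conjecture, not a theorem; the paper does not prove it in general but only verifies specific instances (Theorems~\ref{thm: quadratic form variety}--\ref{thm: Rectangular matrices variety}). You correctly recognize this and pivot to sketching the method for those instances, but your sketch misidentifies the core mechanism in a way that would not lead to a proof.

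The main gap is that you linearize the wrong polynomial. In the paper the affine linearity is a property of the \emph{defining polynomial} $\Delta$ of the level set $\vgen_m=\{x:\Delta(x)=m\}$, not of the polynomial $f$ whose prime factors are being counted. The paper writes $\Delta(\xi,y,z)=\sum_{i=1}^n F_i(y)\xi_i + G(z)$ with $n\ge 3$; after fixing $(y,z)$ to be a prime vector, the equation $\Delta=m$ becomes a linear Diophantine equation, and Vaughan's theorem (Theorem~\ref{thm: Vaughan}) is used to produce \emph{prime solutions} $\xi_i$. In every example the polynomial $f$ is simply $\prod_i x_i$, so making all coordinates simultaneously prime yields $r_0=\dim V$ in one stroke; there is no reduction to a single irreducible factor, and no step where one chooses a prime target $N$ and asks Vaughan to hit it. Your formulation --- ``Vaughan supplies primes $p_1,p_2,p_3$ with $a_1p_1+a_2p_2+a_3p_3=N-c(t)$ \ldots\ yielding $f(x)=N$ prime'' --- inverts the roles: the right-hand side $m-G(z)$ is fixed by the variety, and the primality constraint is on the solutions, not on the value.

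A second gap is that you underestimate where the work lies. Verifying Vaughan's gcd hypotheses~\eqref{eq: vaughan: gcd's are equal}--\eqref{eq: vaughan: sum is zero mod 2d} for a Zariski-dense set of \emph{prime} parameters $(y,z)$ is the paper's main technical contribution, handled by the inductive notion of \emph{intertwined polynomials} (Section~\ref{sec: Related polynomials}, Theorem~\ref{thm: related polynomials and prime points}) together with a careful $2$-adic analysis of the coefficients $F_i(y)$ on odd inputs (Section~\ref{sec: Powers of 2}, Theorems~\ref{thm: General Thm}--\ref{thm: Weak General Thm}). Your remark that ``matching primitivity at $p=2$ via the odd point is elementary'' has it backwards: the $2$-adic part is precisely where the case-dependent congruence conditions on $m$ emerge, while the intertwined-polynomial machinery is what disposes of all odd primes uniformly.
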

\begin{rem*}
Note that $\mathbb{Q}[G]$ is indeed a unique factorization domain so that the number of irreducible factors of $f$ is well-defined, see the discussion and references preceding \cite[Conj. 1.4]{BGS_10}. Also, note that if $\left(\mathcal{O},f\right)$ is primitive then in particular $\gcd\left\{ f\left(x\right)\mid x\in\mathcal{O}\right\} =1$.  
\end{rem*}

The Bourgain-Gamburd-Sarnak conjecture has been established in some cases (\cite{Iw},
\cite{F-I}, \cite{H-M}, \cite{Ba}, \cite{G-T2}, \cite{Sa3}),
one of which is the following \cite{NS10}. Let $G=\mbox{SL}_{n}\left(\RR\right)$
and let $\mbox{Mat}_{n}\left(\F\right)$ be the space of $n\times n$
matrices over $\RR$, which we identify with the affine space $\Af^{n^{2}}$.
Consider the action of $G$ on $\mbox{Mat}_{n}$ by left matrix multiplication
$g\cdot x=gx$ for $g\in G$ and $x\in\mbox{Mat}_{n}$. Disregarding
the variety of singular matrices, each $\mbox{SL}_{n}$-orbit is of
the form
\begin{equation}
\vdet_{m}=\left\{ x\in\mbox{Mat}_{n}\left(\RR\right):\det\left(x\right)=m\right\} \label{eq: fixed det variety}
\end{equation}
for $0\neq m\in\F$. Let $\Lambda=\mbox{SL}_{n}\left(\ZZ\right)$, and call an integral matrix prime if all of its entries are prime numbers ($\neq 1$) in $\mathbb{Z}$.
\begin{thm}[\cite{NS10}]
\label{thm: determinant variety}For $n\geq3$ and $m\neq0$, prime
matrices are Zariski-dense in $\vdet_{m}$ if and only if $m\equiv0\left(\mbox{mod }2^{n-1}\right)$.
\end{thm}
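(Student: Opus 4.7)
The plan is to prove both implications by passing through the subset of prime matrices whose entries are all \emph{odd} primes, which forms a Zariski-dense sub-family of the prime matrices in $\vdet_m$; necessity then reduces to a short 2-adic computation, while sufficiency is obtained via the paper's general Zariski-density criterion (which reduces Zariski-density of prime points on a hypersurface to the existence of a single odd integral point).

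\textbf{Necessity.} Suppose the set of prime matrices with determinant $m$ is Zariski-dense in $\vdet_{m}$. Since $\det-m$ is an irreducible polynomial for $m\neq 0$, the variety $\vdet_{m}$ is irreducible of dimension $n^{2}-1$; each subvariety $\{a_{ij}=2\}\cap\vdet_{m}$ has dimension $n^{2}-2$, so removing them preserves Zariski-density, and hence the prime matrices with all entries odd primes are themselves Zariski-dense. Any such $A$ is of the form $A=J+2B$, where $J=\mathbf{1}\mathbf{1}^{T}$ is the all-ones matrix and $B$ is integral. The matrix-determinant lemma applied to the rank-one update $J=\mathbf{1}\mathbf{1}^{T}$ gives
\[
\det A=\det(2B)+\mathbf{1}^{T}\operatorname{adj}(2B)\mathbf{1}=2^{n}\det B+2^{n-1}\mathbf{1}^{T}\operatorname{adj}(B)\mathbf{1},
\]
which is manifestly divisible by $2^{n-1}$; therefore $2^{n-1}\mid m$.

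\textbf{Sufficiency.} Assume $2^{n-1}\mid m$ and write $m=2^{n-1}m'$. The plan is to verify, for $f(x)=\det(x)-m$ on $\operatorname{Mat}_{n}$, the hypothesis of the paper's general Zariski-density criterion, which will reduce the problem to exhibiting a single integer matrix with all entries odd and determinant equal to $m$. Verification of the hypothesis uses the row-multilinearity of $\det$: fixing all but three suitably chosen coordinates reduces $\det(A)=m$ to a ternary linear equation, to which Vaughan's variation on Vinogradov's three-primes theorem applies; letting the fixed complement vary sweeps out a Zariski-dense collection of such lines. To exhibit the required odd integer point, take $B=\operatorname{diag}(m',1,\ldots,1,0)$ and $A=J+2B$: then $\det B=0$ and $\mathbf{1}^{T}\operatorname{adj}(B)\mathbf{1}=m'$ (only the last diagonal cofactor of $B$ is nonzero), so $\det A=2^{n-1}m'=m$, while every entry of $A$ lies in $\{1,3,1+2m'\}$ and is thus odd.

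\textbf{Main obstacle.} The principal technical difficulty is the verification of the general criterion for $f=\det-m$: one must ensure that, as the fixed coordinates range over a Zariski-dense family, the coefficients of the resulting ternary linear equations satisfy the coprimality and sign conditions required by Vaughan--Vinogradov, and that the prime points so produced are not trapped in a proper subvariety of $\vdet_{m}$. The 2-adic divisibility and the explicit odd-point construction above are both elementary by comparison.
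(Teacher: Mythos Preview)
Your outline is correct and tracks the paper's strategy closely: both directions pass through odd integral points and invoke the general criterion (Theorems~\ref{thm: General Thm}/\ref{thm: Weak General Thm}) built on Vaughan's theorem. A few differences are worth flagging. For necessity, the paper computes $2^{n-1}\mid\det(A)$ for odd $A$ by adding the first row to each of the remaining $n-1$ rows (producing $n-1$ even rows), rather than via the matrix--determinant lemma; your rank-one-update argument is equally short and perhaps more reusable. For sufficiency, your explicit odd matrix $A=J+2\,\mathrm{diag}(m',1,\dots,1,0)$ with $\det A=m$ is a clean construction that plugs directly into Theorem~\ref{thm: General Thm}; the paper instead applies Theorem~\ref{thm: Weak General Thm}, which avoids building a specific point by checking that $\det(x)\equiv 0\pmod{2^{n-1}}$ uniformly over \emph{all} odd $x$ and that $\varepsilon=n-1$ is sharp.

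One correction to your description of the ``main obstacle'': the paper does not fix all but three coordinates, but rather frees an \emph{entire row} of $n\ge 3$ entries, so that $\det(x)=m$ becomes the $n$-term linear equation $\sum_j D_j(y)\xi_j=m$ in the cofactors $D_j(y)$. The gcd and parity hypotheses needed for Vaughan's theorem are then obtained not ad hoc but by showing that any two cofactors $D_i(y),D_j(y)$ are \emph{intertwined} in the sense of Section~\ref{sec: Related polynomials} (this is Example~\ref{exa: det example for intertwined}); Theorem~\ref{thm: related polynomials and prime points} then supplies the Zariski-dense set of prime $y$'s on which the cofactors are pairwise $2$-coprime. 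This intertwining machinery is exactly what discharges the obstacle you identify.
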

If prime matrices are Zariski-dense in $\mathcal{D}_{m}$, then, since
$\mathcal{D}_{m}\left(\ZZ\right)$ is a union of finitely many $\mbox{SL}_{n}\left(\ZZ\right)$-orbits,
there exists at least one $\mbox{SL}_{n}\left(\ZZ\right)$-orbit $\mathcal{O}\subset\mathcal{D}_{m}$
such that $\mathcal{O}$ is Zariski-dense in $\mathcal{D}_{m}$ and
prime matrices are Zariski-dense in $\mathcal{O}$. This means that
when there are no congruence obstructions, $r_{0}\left(\mathcal{O},f\right)=n^{2}$
for $f\left(x_{1,1},\ldots,x_{n,n}\right)=\prod_{i,j=1}^{n}x_{i,j}$.

It is a natural problem to find further infinite families of examples
where the conjecture holds, and this is the goal of the present paper.
We extend the approach of \cite{NS10} to varieties given as the level
set of polynomials with a certain structure, that generalizes the
determinant polynomial. More specifically, we consider hyper-surfaces
of the form $\vgen_{m}=\left\{ x:\inpol\left(x\right)=m\right\} $
where $m\in\ZZ$, and formulate a sufficient condition on $\inpol$
such that prime points are Zariski-dense in $\vgen_{m}$ if and only
if $\vgen_{m}$ contains an odd point (Theorem \ref{thm: General Thm}).
While the condition on $\vgen_{m}$ is clearly necessary, it is interesting
that under certain conditions on the structure of $\inpol$ it is
also sufficient.

The determinant variety $\vdet_{m}$ is one instance in which our
method holds; we proceed to discuss some further examples.

In the first example, we consider the quadratic form on $\RR^{2n+k}$:
\[
Q_{n,k}\left(x_{1},\ldots,x_{n},y_{1},\ldots,y_{n},z_{1},\ldots,z_{k}\right)=\sum_{i=1}^{n}x_{i}y_{i}+\sum_{i=1}^{k}z_{i}^{2}
\]
and the algebraic variety defined as the level set of this form,
\[
\vquad_{m}=\left\{ \left(x,y,z\right)\in\RR^{2n+k}\mid Q_{n,k}\left(x,y,z\right)=m\right\} ,
\]
where $m\neq0$ is an integer. The variety $\vquad_{m}$ is an orbit
of the orthogonal group of the form $Q_{n,k}$ in its action on $\RR^{2n+k}$
by matrix multiplication. Since this group is conjugate to the orthogonal
group $\mbox{SO}_{2n,k}$, for $n\ge 3$ it is a simple (but not simply connected) algebraic group.
The following is a necessary and sufficient condition for Zariski
density of prime vectors in $\vquad_{m}$, namely vectors all of whose entries are prime numbers ($\neq 1$) in $\mathbb{Z}$:

\begin{mythm}

\label{thm: quadratic form variety}

Let $n\geq3$, $k\geq0$ and $m\neq0$ be integers. Prime vectors 
are Zariski-dense in $\vquad_{m}$ if and only if $n+k\equiv m\left(\mbox{mod }2\right)$.

\end{mythm}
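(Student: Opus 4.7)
The plan is to deduce the theorem from the general criterion (Theorem \ref{thm: General Thm}), which reduces Zariski-density of prime points on $\vgen_m$ to the existence of an odd integer point on $\vgen_m$, provided that the defining polynomial $\inpol$ satisfies the structural hypothesis formulated in that theorem. Two things must therefore be checked for $\inpol = Q_{n,k}$: (i) an odd integer point exists on $\vquad_m$ if and only if $n+k \equiv m \pmod{2}$, and (ii) $Q_{n,k}$ satisfies the hypothesis of Theorem \ref{thm: General Thm}. I expect (ii) to be where essentially all of the substantive work lies.

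Point (i) is elementary. An odd integer point makes every product $x_i y_i$ and every square $z_i^2$ odd, so $Q_{n,k}(x,y,z) \equiv n+k \pmod{2}$, forcing the congruence. For the converse, set $y_i = 1$ for all $i$, $z_j = 1$ for all $j$, $x_i = 1$ for $i \geq 2$, and $x_1 = m - k - (n-1)$; then $Q_{n,k}(x,y,z) = m$, and every entry is odd, since $m - k - (n-1) \equiv n + k - k - n + 1 \equiv 1 \pmod{2}$ by hypothesis.

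Step (ii) is the main obstacle. The precise structural hypothesis of Theorem \ref{thm: General Thm} has not yet appeared in the excerpt, but the outline in the abstract makes clear that the sufficient condition is designed so that, after freezing most variables to suitable odd primes, the residual Diophantine equation reduces to a linear form in at least three variables to which Vaughan's sharpening of Vinogradov's three-primes theorem applies. For $Q_{n,k}$ this reduction is transparent: freezing $y_1,\ldots,y_n$ and $z_1,\ldots,z_k$ to odd primes turns the defining equation into
\[
y_1 x_1 + \cdots + y_n x_n \;=\; m - \sum_{i=1}^{k} z_i^2,
\]
an inhomogeneous linear Diophantine equation in $n \geq 3$ unknowns with prime coefficients and right-hand side of the correct parity. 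The technical work is to arrange the frozen primes so that the coefficients and target satisfy the local congruence and coprimality constraints demanded by Vaughan's theorem, and then to vary the frozen primes through infinite families so that the resulting prime points on $\vquad_m$ are not confined to any proper Zariski-closed subset. The first task depends on the precise statement of the hypothesis in Theorem \ref{thm: General Thm}; the second is packaged into that theorem, and the requirement $n \geq 3$ in the statement is dictated precisely by the three-primes input.
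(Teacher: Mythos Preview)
Your plan is correct and matches the paper's approach: the paper proves this theorem twice, once directly via Theorem~\ref{thm: Prime solutions variety} (Section~\ref{sec: Quadratic forms}) and once as an instance of Theorem~\ref{thm: Weak General Thm} (the example following it). Your point~(i) is handled exactly as you describe.

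However, your expectation that step~(ii) is ``where essentially all of the substantive work lies'' is mistaken, and by leaving it as a black box you have skipped something that is in fact trivial for this particular $\inpol$. Here $F_i(y)=y_i$ are coordinate functions, so the structural hypothesis of Theorem~\ref{thm: General Thm} (that two of the $F_i$ be \emph{intertwined}) is satisfied in the simplest possible way: $F_1(y)=y_1$ and $F_2(y)=y_2$ are intertwined of depth~$1$ with $k_0=1$, $\alpha_1=1$, $\beta\equiv 0$. Moreover $\varepsilon(y^\ast)=1$ for any odd $y^\ast$, since $2^0\mid y_i^\ast$ but $2^1\nmid y_i^\ast$, so the congruence condition in Theorem~\ref{thm: General Thm} reduces exactly to $m\equiv n+k\pmod 2$. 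Alternatively, the paper's direct proof just writes down the set $\G_{\mathrm{quad}}$ explicitly: choose $y_3,\dots,y_n,z_1,\dots,z_k$ arbitrary odd primes with $m\neq\sum z_i^2$, and $y_1,y_2$ distinct odd primes coprime to $m-\sum z_i^2$; the Vaughan conditions are then immediate. Either way, the verification is a couple of lines, not a substantive obstacle---the quadratic form is deliberately presented first in the paper precisely because it is the transparent case.
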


In the second example, we consider the variety of $2n\times2n$ anti-symmetric
matrices of fixed Pfaffian $m$:
\[
\vpf_{m}=\left\{ x\in\mbox{Mat}_{2n}(\RR)\mid x^{\mbox{t}}=-x,\pf\left(x\right)=m\right\} .
\]
For $m\neq0$, this variety is an $\mbox{SL}_{2n}$-orbit under the
action: $g\cdot x=gxg^{\mbox{t}}$, $g\in\mbox{SL}_{2n}$. An anti-symmetric matrix all of whose non-diagonal entries are primes numbers $(\neq 1$) in $\mathbb{Z}$ will be called a prime matrix in $\vpf_{m}$. We prove
the following:

\begin{mythm}

\label{thm: Pfaffian variety}For $n\geq2$, prime matrices are Zariski-dense
in $\vpf_{m}$ if and only if $m$ is an odd integer.

\end{mythm}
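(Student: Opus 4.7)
The plan is to apply the general criterion of Theorem \ref{thm: General Thm} to the Pfaffian polynomial $\inpol=\pf$ on the affine space of $2n\times 2n$ antisymmetric matrices, with coordinates the strictly upper-triangular entries $x_{ij}$, $i<j$. Once $\pf$ is verified to satisfy the hypotheses of that theorem, the statement reduces to the claim that $\vpf_{m}$ contains an \emph{odd} point (a matrix all of whose above-diagonal entries are odd integers) if and only if $m$ is odd.

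For the necessity, I would use the perfect-matching expansion $\pf(x)=\sum_{\pi}\mathrm{sgn}(\pi)\prod_{\{i,j\}\in\pi}x_{ij}$, the sum being over perfect matchings $\pi$ of $\{1,\ldots,2n\}$. Reducing modulo $2$ with every $x_{ij}$ odd, each monomial contributes $1$, so $\pf(x)\equiv(2n-1)!!\pmod{2}$, which is odd; hence an odd point in $\vpf_{m}$ forces $m$ odd. For the sufficiency, given an odd $m$ I would exhibit the explicit odd point $A$ with $x_{12}=m$ and $x_{ij}=1$ for all other $i<j$. Expanding $\pf$ along the first row and noting that for every $j\geq 2$ the submatrix obtained by deleting rows and columns $1$ and $j$ is the same all-ones antisymmetric matrix $J_{2n-2}$, one gets $\pf(A)=(m-1)\,\pf(J_{2n-2})+\pf(J_{2n})$. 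A short induction on $n$ using the same row expansion together with the identity $\sum_{j=2}^{2k}(-1)^{j}=1$ yields $\pf(J_{2k})=1$ for all $k\geq 1$, and therefore $\pf(A)=m$, as required.

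The main obstacle is the first step: verifying that $\pf$ falls within the scope of Theorem \ref{thm: General Thm}. I expect this to proceed in close parallel with the treatment of the determinant polynomial underlying Theorem \ref{thm: determinant variety}, exploiting the key structural property that fixing all entries outside a single row and its transposed column makes $\pf$ an affine function of the remaining variables, whose leading coefficients are themselves Pfaffians of size $2n-2$. This multilinear structure is precisely what should allow the Vaughan--Vinogradov three-primes input packaged in the general theorem to produce prime solutions to $\pf(x)=m$ lying in all the requisite arithmetic progressions, and to upgrade their existence to Zariski-density in $\vpf_{m}$.
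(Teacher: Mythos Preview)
Your proposal is correct and follows essentially the same route as the paper: expand $\pf$ along the first row to put it in the form \eqref{eq: def of pol Delta}, observe that the coefficients are Pfaffians of order $2n-2$ (hence intertwined, exactly as for determinants in Example~\ref{exa: det example for intertwined}), and apply the general criterion. The only cosmetic differences are that the paper invokes Theorem~\ref{thm: Weak General Thm} with $\varepsilon=1$ (using the inductive Lemma~\ref{lem: Pf of odd matrix is odd} to see that every sub-Pfaffian $P_i(y)$ is odd for odd $y$), whereas you argue the parity directly via the perfect-matching expansion $\pf(x)\equiv(2n-1)!!\pmod 2$ and, for sufficiency, exhibit an explicit odd matrix $A\in\vpf_m$---which makes the congruence condition of Theorem~\ref{thm: General Thm} vacuously true and sidesteps the need to compute $\varepsilon(y^*)$ at all. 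Both arguments are short and equivalent in substance.
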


While the Bourgain-Gamburd-Sarnak conjecture has been formulated for simple groups $G$,
it is of course natural to consider the case where $G$ is semi-simple
as well. Indeed, our third infinite family of examples consists of
orbits of $G=\mbox{SP}_{\l}(\RR)\times\mbox{SL}_{2n}(\RR)$ acting on the space
of $2\l\times2n$ matrices by $\left(g_{1},g_{2}\right)\cdot x=g_{1}xg_{2}^{\mbox{t}}$
for $g_{1}\in\mbox{SP}_{\l}$, $g_{2}\in\mbox{SL}_{2n}$, and $x\in\mbox{Mat}_{2\l\times2n}(\RR)$
with $\l\geq n\geq1$. Let
\begin{equation}
\Omega_{\l}=\left[\begin{matrix}0_{\l} & I_{\l}\\
-I_{\l} & 0_{\l}
\end{matrix}\right].\label{eq: Omega def}
\end{equation}
Let $m\in\ZZ$ and define the variety
\[
\vrec_{m}=\left\{ x\in\mbox{Mat}_{2\l\times2n}(\RR)\mid\pf\left(x^{\mbox{t}}\Omega_{\l}x\right)=m\right\} ,
\]
for which we prove the following:

\begin{mythm}

\label{thm: Rectangular matrices variety}For $n\geq1$, $\l\geq2$,
$\l\geq n$, and $0\neq m\in\ZZ$, prime matrices are Zariski-dense
in $\vrec_{m}\subset\mbox{Mat}_{\,2\l\times2n}\left(\F\right)$ if
and only if $m\equiv0\left(\mbox{mod }2^{2n-1}\right)$.

\end{mythm}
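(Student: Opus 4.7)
The plan is to apply the paper's general criterion (Theorem~\ref{thm: General Thm}) to the polynomial $\inpol(x)=\pf(x^{\mbox{t}}\Omega_{\l}x)$ on $\mbox{Mat}_{2\l\times 2n}(\RR)$, whose level sets are the varieties $\vrec_{m}$. The action $(g_{1},g_{2})\cdot x=g_{1}xg_{2}^{\mbox{t}}$ of $\mbox{SP}_{\l}\times\mbox{SL}_{2n}$ preserves $\inpol$ (because $g_{1}^{\mbox{t}}\Omega_{\l}g_{1}=\Omega_{\l}$ and $\det g_{2}=1$), so $\vrec_{m}$ is $G$-invariant. Granted the general criterion, the theorem reduces to: (a) verifying the structural hypotheses on $\inpol$, and (b) showing that $\vrec_{m}(\ZZ)$ contains an odd point exactly when $m\equiv 0\pmod{2^{2n-1}}$.

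For part (a), the multi-linear structure required by the general theorem is exposed by a Cauchy--Binet-type expansion. Writing $x=\begin{pmatrix}A\\B\end{pmatrix}$ with $A,B\in\mbox{Mat}_{\l\times 2n}$, one has $x^{\mbox{t}}\Omega_{\l}x=A^{\mbox{t}}B-B^{\mbox{t}}A$, and this Pfaffian expands as a signed sum of products $\det(A_{I})\det(B_{J})$ over $n$-subsets $I,J\subset\{1,\dots,\l\}$, exhibiting $\inpol$ as a polynomial multi-linear in appropriate groups of rows of $x$. This is precisely the form to which the row-by-row application of Vaughan's three-primes theorem (used in \cite{NS10} and in the paper's general theorem) can be brought to bear.

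The bulk of the work is in (b). For necessity, each $(i,j)$-entry of $A^{\mbox{t}}B-B^{\mbox{t}}A$ is a sum of $2\times 2$ minors of $x$ and hence even for all-odd $x$; thus $x^{\mbox{t}}\Omega_{\l}x=2M$ for an integer antisymmetric $M$, and $\inpol(x)=2^{n}\pf(M)$. To extract the further $2^{n-1}$, I would write $x=J+2y$ with $J$ the all-ones matrix, use $J^{\mbox{t}}\Omega_{\l}J=0$, and a mod-$4$ expansion to deduce $M_{ij}\equiv c_{i}+c_{j}\pmod 2$ for certain integers $c_{i}=c_{i}(y)$. Thus $M\bmod 2$ is of the rank-$2$ form $c\mathbf{1}^{\mbox{t}}+\mathbf{1}c^{\mbox{t}}$; a Smith-normal-form argument for the $2$-adic valuation of $\det(M)$ (together with $\pf(M)^{2}=\det(M)$) then yields $2^{n-1}\mid\pf(M)$, giving $2^{2n-1}\mid\inpol(x)$. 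For sufficiency, I would exhibit an explicit small odd matrix realising $\inpol=\pm 2^{2n-1}$ and, by adding even perturbations $2z$ (which preserves the parity of $x$), realise every $m\equiv 0\pmod{2^{2n-1}}$ as $\inpol$ of some odd integer matrix.

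The main obstacle I anticipate is the sharp $2$-adic analysis in (b). The rank bound on $M\bmod 2$ combined with the Smith-normal-form argument gives the necessary divisibility cleanly, but verifying sharpness --- that no further power of $2$ is forced --- requires the explicit extremal construction, and guaranteeing that every multiple of $2^{2n-1}$ is actually attained as $\inpol$ of an odd matrix needs a short deformation argument exploiting that $\inpol$ is non-constant on the set of odd matrices. Taken together with the structural verification (a), these yield Zariski-density of prime matrices in $\vrec_{m}$ under precisely the stated congruence.
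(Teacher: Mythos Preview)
Your part (b) necessity argument is a pleasant alternative to the paper's. The paper writes $\pitz(x)=\sum_{t_{1}<\cdots<t_{n}}\det\bigl(Z_{t_{1},\ldots,t_{n}}(x)\bigr)$ as a sum of determinants of odd $2n\times 2n$ submatrices and gets $2^{2n-1}\mid\pitz(x)$ immediately from the row-reduction fact for odd determinants. Your route via $x^{\mathrm t}\Omega_{\l}x=2M$, the rank bound $M\bmod 2=c\mathbf 1^{\mathrm t}+\mathbf 1c^{\mathrm t}$, and Smith normal form is correct and arguably more conceptual; both arrive at the same conclusion.

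However, there is a genuine gap in part (a). Theorem~\ref{thm: General Thm} does not merely require that $\inpol$ be ``multi-linear in appropriate groups of rows''. It requires a decomposition $\inpol(x)=\sum_{i}F_{i}(y)\xi_{i}+G(z)$ together with the verification that two of the coefficient polynomials $F_{i}$ are \emph{intertwined} in the precise recursive sense of Section~\ref{sec: Related polynomials}. Your Cauchy--Binet sketch neither specifies which variables play the role of $\xi$ versus $y$, nor addresses the intertwined condition at all; and the expansion you write down (a sum of products $\det(A_{I})\det(B_{J})$) is not obviously of the required shape. The paper handles this by expanding $\pitz(x)$ along the last column of $x$ as $\sum_{i=1}^{2\l}B_{i}(y)\xi_{i}$ and then proving (Proposition~\ref{prop: A_i and its twin are maximally related}) that $B_{i}$ and $B_{\hat\imath}$ are intertwined, by an induction through the recursive Pfaffian formula for $\pitz$. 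Without this step, Theorem~\ref{thm: General Thm} simply does not apply, and your proof has no engine.

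Your sufficiency direction in (b) is also more than is needed. You propose to build, for each $m\equiv 0\pmod{2^{2n-1}}$, an odd matrix $x$ with $\pitz(x)=m$ exactly, via a base example plus even perturbations; this deformation step is not obviously easy. The paper avoids it entirely: once one knows (Lemma~\ref{lem: parity conditinos on rectangular coefficients}) that $2^{2n-2}\mid B_{i}(y)$ for every odd $y$ and that this is sharp for some odd $y^{1}$, one has $\varepsilon=2n-1$, and since $\pitz(x)\equiv 0\pmod{2^{2n-1}}$ for every odd $x$, Theorem~\ref{thm: Weak General Thm} gives the equivalence directly. So the correct target in (b) is sharpness of the $2$-adic bound on the coefficients $B_{i}$, not the construction of odd points in $\vrec_{m}$ itself.
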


The motivation for this family of varieties comes from pre-homogeneous
vector spaces, to which the families $\left\{ \vdet_{m}\right\} _{m\neq0}$,
$\left\{ \vquad_{m}\right\} _{m\neq0}$ and $\left\{ \vpf_{m}\right\} _{m\neq0}$
belong as well, as explained in Section \ref{sec: rectangular matrices}.

In section \ref{sec: non-homogenous varieties}, using the same technique,
we prove Zariski-density of prime points in some non-homogeneous varieties,
which do not support a group action at all. The main example is the
variety of $n\times n$ matrices of fixed \emph{permanent} $m$:
\[
\vper_{m}=\left\{ x\in\mbox{Mat}_{n}\left(\RR\right)\mid\perm\left(x\right)=m\right\} ,
\]
for which we prove the following.

\begin{mythm}\label{thm: permanent variety}Let $n\geq3$ and $0\neq m\in\ZZ$.
Write $2^{s}-1\leq n<2^{s+1}-1$ for a unique integer $s\geq2$ (i.e.
$s=\left\lfloor \log_{2}\left(n+1\right)\right\rfloor $). Then prime
matrices are Zariski-dense in $\vper_{m}$ if and only if
\[
m\equiv\begin{cases}
2^{n-s}\left(\mbox{mod }2^{n-s+1}\right) & \mbox{if }n=2^{s}-1\\
0\left(\mbox{mod }2^{n-s}\right) & \mbox{if }2^{s}-1<n<2^{s+1}-1.
\end{cases}
\]

\end{mythm}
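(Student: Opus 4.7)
The plan is to apply the general Theorem \ref{thm: General Thm}, which, under its hypotheses on the defining polynomial, asserts that prime points are Zariski-dense in $\vgen_{m}$ if and only if $\vgen_{m}$ contains an odd point, that is, an integer point all of whose coordinates are odd. The first step is therefore to verify that $\inpol = \perm$ satisfies the structural hypotheses of Theorem \ref{thm: General Thm}, in parallel with the verifications already carried out for the determinant, the Pfaffian, and the quadratic forms $Q_{n,k}$. The relevant features are the representation of $\perm$ as a sum of squarefree monomials indexed by $S_{n}$, each of full degree $n$, together with the bi-$S_{n}$ symmetry of $\perm$, which ensures that the Vaughan--Vinogradov machinery packaged in Theorem \ref{thm: General Thm} applies coordinatewise.

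The core arithmetic work is then to identify the set of $m$ for which $\vper_{m}$ contains an odd point with the set described in Theorem \ref{thm: permanent variety}. For necessity, I would substitute $x_{i,j} = 1 + 2 z_{i,j}$ with $z_{i,j} \in \ZZ$ and expand
\[
\perm(x) = \sum_{\sigma \in S_{n}} \prod_{i=1}^{n} (1 + 2 z_{i,\sigma(i)}) = \sum_{k=0}^{n} 2^{k} (n-k)! \, B_{k}(z),
\]
where $B_{k}(z) = \sum_{|S| = k} \sum_{\tau\colon S \hookrightarrow [n]} \prod_{i \in S} z_{i,\tau(i)}$ and in particular $B_{0} = 1$. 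By Legendre's formula $v_{2}((n-k)!) = (n-k) - s_{2}(n-k)$, where $s_{2}$ is the binary digit sum, so the $k$-th summand has $2$-adic valuation at least $n - s_{2}(n-k)$. Since $s = \lfloor \log_{2}(n+1) \rfloor$ is precisely $\max\{ s_{2}(r) : 0 \le r \le n \}$, this forces $v_{2}(\perm(x)) \ge n - s$ for every odd $x$. In the special case $n = 2^{s} - 1$, the maximum $s_{2}(r) = s$ is attained only at $r = n$, i.e.\ at $k = 0$; all other summands have $v_{2} \ge n - s + 1$, and since $n!/2^{n-s}$ is odd this upgrades to $\perm(x) \equiv n! \equiv 2^{n-s} \pmod{2^{n-s+1}}$.

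For sufficiency I would produce, for each admissible $m$, an all-odd integer matrix $x$ with $\perm(x) = m$. The strategy is to fix a base odd matrix $x_{0}$ of the form $J + 2 z_{0}$, where $J$ is the all-ones matrix, whose permanent realises the required residue modulo $2^{n-s+1}$; the existence of such an $x_{0}$ (when $n \neq 2^{s} - 1$) follows from the flexibility of those polynomials $B_{k}(z)$ whose contribution to $\perm(x)$ lies at valuation exactly $n - s$. A further perturbation of a single row $x_{1,\cdot} \mapsto x_{1,\cdot} + 2 t$ then changes $\perm(x)$ by $2 \sum_{j} t_{j}\, \perm(M_{1,j})$, with the minors $M_{1,j}$ themselves all-odd; by multilinearity and a suitable choice of the $t_{j}$ one hits every integer in the prescribed residue class.

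The main obstacle is the combinatorial identity $\max_{0 \le r \le n} s_{2}(r) = \lfloor \log_{2}(n+1) \rfloor$ together with the precise bookkeeping of which values of $k$ actually contribute to $\perm(x) \bmod 2^{n-s+1}$: this is what selects $n - s$ as the correct exponent and produces the dichotomy between $n = 2^{s} - 1$ (where the extremal $r = n$ is unique, forcing the sharper congruence $2^{n-s} \pmod{2^{n-s+1}}$) and $2^{s} - 1 < n < 2^{s+1} - 1$ (where several values of $r$ contribute and any residue divisible by $2^{n-s}$ is attained). Once the arithmetic characterization of odd points in $\vper_{m}$ is established, Theorem \ref{thm: General Thm} converts it into the Zariski-density of prime matrices claimed in Theorem \ref{thm: permanent variety}.
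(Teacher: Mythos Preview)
Your necessity computation via the substitution $x_{i,j}=1+2z_{i,j}$ and Legendre's formula is correct, and in fact cleaner than the paper's route: the paper proves the same congruence (Lemma~\ref{Lemma: perm mod powers of 2}) by induction, starting from the all-ones matrix $J$ and tracking what happens under $x_{i,j}\mapsto x_{i,j}\pm 2$. Your direct $2$-adic expansion and the identity $\max_{0\le r\le n}s_2(r)=\lfloor\log_2(n+1)\rfloor$ give the same conclusion in one stroke.

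Where you diverge from the paper, and where there is a genuine gap, is on the sufficiency side. You read Theorem~\ref{thm: General Thm} as ``iff $\vgen_m$ contains an odd point'' and then try to \emph{construct} an odd matrix with $\perm(x)=m$ exactly. That is stronger than what Theorem~\ref{thm: General Thm} actually requires (only the congruence $m\equiv\Delta(x^\ast)\pmod{2^{\varepsilon(y^\ast)}}$ for \emph{some} odd $x^\ast$ in the ambient space), and your construction sketch does not close. Perturbing one row changes $\perm(x)$ by an integer combination of $\{2\perm(M_{1,j}(x_0))\}_j$, so to hit every $m$ in the target residue class you would need the $\gcd$ of these quantities to be a power of~$2$ dividing $2^{n-s}$; for a fixed base $x_0$ there is no reason the $(n-1)\times(n-1)$ minors avoid a common odd prime factor, so ``a suitable choice of the~$t_j$'' is not yet justified. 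The paper sidesteps this entirely by invoking Theorem~\ref{thm: Weak General Thm}: once you know (from your own computation) the value of $\varepsilon$ and that $\perm(x)\bmod 2^{\varepsilon}$ is constant over odd~$x$, the equivalence with the stated congruence on $m$ is immediate, and no odd point in $\vper_m$ need be exhibited.

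Finally, you should make the verification of the intertwined hypothesis explicit rather than gesturing at ``bi-$S_n$ symmetry'': write $\perm(x)=\sum_j K_j(y)\xi_j$ with $K_j(y)=\perm(M^j(y))$ and observe, exactly as in Example~\ref{exa: det example for intertwined} for the determinant, that any two $K_i,K_j$ are permanents of $(n-1)\times(n-1)$ matrices differing in a single column, hence intertwined of depth $n-1$.
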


Note that in this case, the set of $m\in\ZZ$ where ${\cal P}_{m}$
is Zariski dense, depends on whether $n+1$ is a power of $2$ or
not!

\begin{rem}
A key ingredient of our method is that, as was noted in \cite{NS10},
establishing Zariski-density of prime points in such varieties $\vgen_{m}$
can be reduced to solving a non-homogeneous linear Diophantine equation
\[
\a_{1}\xi_{1}+\ldots+\a_{n}\xi_{n}+\a_{n+1}=0
\]
in primes. This is possible by a theorem of Vaughan \cite{Vaughan}
(based on Vinogradov's Three-Prime Theorem), when the integers $\a_{1},\ldots,\a_{n+1}$
satisfy certain congruence conditions and $n\geq3$. It will be discussed
extensively in Section \ref{sec: Prime solutions - Vaughan}.
\end{rem}

\paragraph*{Connection to  the Bourgain-Gamburd-Sarnak conjecture.}

We remark that Theorems 
\ref{thm: quadratic form variety},
\ref{thm: Pfaffian variety}, and \ref{thm: Rectangular matrices variety}
are indeed new instances of the conjecture stated above. The common setting 
of these examples (generalizing that of Theorem \ref{thm: determinant variety}) 
 is that of a semi-simple linear algebraic
group $G$ acting on a real finite dimensional linear space $V\left(\RR\right)$, realized as
a space of matrices $\mbox{Mat}_{n_{1}\times n_{2}}\left(\RR\right)$.
There exists a polynomial $\inpol$ on $V\cong\RR^{\dim V}$ which
is invariant under the action of $G$ (namely $\Delta\left(g\cdot x\right)=\Delta\left(x\right)$)
such that the level sets $\vgen_{m}=\left\{ x\in V:\inpol\left(x\right)=m\right\} $
for $m\neq0$ are orbits of $G$. (The level set for $m=0$ consists
of several $G$-orbits and will not be part of our discussion). When
restricting to integral points, $G\left(\ZZ\right)$
acts on $V\left(\ZZ\right)$, and $V\left(\ZZ\right)=\bigcup_{m\in\ZZ}\vgen_{m}\left(\ZZ\right)$,
since $\inpol$ has integer coefficients. Moreover, every $\vgen_{m}\left(\ZZ\right)$
for $0\neq m\in\ZZ$ is a finite union of $G\left(\ZZ\right)$-orbits;
hence, since $\vgen_{m}\left(\ZZ\right)$ is Zariski-dense in $\vgen_{m}$,
there exists at least one such $G\left(\ZZ\right)$-orbit ${\cal O}$ which 
is also Zariski-dense
in $\vgen_{m}$.

Theorems \ref{thm: determinant variety},
\ref{thm: quadratic form variety}, \ref{thm: Pfaffian variety},
and \ref{thm: Rectangular matrices variety} assert that when there are
no congruence obstructions, prime points are Zariski dense in $\vgen_{m}$
(the Zariski closure of ${\cal O}$); this means that for the polynomial
$f\left(x_{1},\ldots,x_{\dim V}\right)=\prod_{i=1}^{\dim V}x_{i}$,
 $r_{0}\left({\cal O},f\right)=\dim V$. 
 
 The Bourgain-Gamburd-Sarnak conjecture is formulated specifically for a simply connected simple algebraic group $\tilde{G}\subset GL_n$ defined over $\mathbb{Q}$, 
 with the orbit being a Zariski-dense subgroup $\Lambda$ of $\tilde{G}(\mathbb{Z})$. 
 To put this in perspective, note that in \cite[\S 2.2]{BGS_10}, the example of the double-cover adjoint epimomorphism  $\phi: \tilde{G}=SL_2(\mathbb{R})\to SO(F)=G$ is considered, where $F$ is the three variables form $F(x,y,z)=xz-y^2$. It is shown there that the conjecture fails for the adjoint group $SO(F)$ and the primitive polynomial $f(x,y,z)=x_{1,1}-1$ in $\mathbb{Q}[G]$. 

When $G$ is not simply connected, the covering map $ \tilde{G}\to G $, $\tilde{g}\to g$ is not injective, and to compare the conjecture to our set-up we follow the discussion in  \cite[\S 2.2]{BGS_10}. Given the $G$-orbit $G\cdot v_0=\vgen \subset \RR^d\cong V\cong G/H$ with $v_0\in \mathbb{Z}^d$ and $H=\text{St}_G(v_0)$, consider the map $\phi : \tilde{G}\to \vgen$ given by $\phi(\tilde{g})=g \cdot v_0$, so that $\mathcal{O}=\phi(\tilde{G}(\mathbb{Z}))= G\cdot v_0 \subset \vgen(\mathbb{Z})$. 
  There is an injective ring homomorphism between the coordinate rings $\phi^\ast : \mathbb{Q}[G/H]=\mathbb{Q}[\vgen]\to  \mathbb{Q}[\tilde{G}]$, given by $\phi^\ast(f)=f\circ \phi$. 
 Given a polynomial $f$ in $\mathbb{Q}[\vgen]=\mathbb{Q}[G/H]$ which is primitive on $\mathcal{O}$, clearly $\phi^\ast(f)\in \mathbb{Q}[\tilde{G}]$ is primitive on $\tilde{G}(\mathbb{Z})=\Lambda$, and then the conjecture asserts that 
 $r_0\left(\Lambda, \phi^\ast(f)\right)$ is equal to the number of irreducible factors in  the factorization of $\phi^\ast(f)$ to irreducibles in the unique factorization domain $\mathbb{Q}[\tilde{G}]$.

 Let us begin by noting that $r_0\left(\Lambda, \phi^\ast(f)\right)$ is equal to $r_0(\mathcal{O},f)$, arguing as follows. First, the set of points $x\in \mathcal{O}\subset \vgen(\mathbb{Z})\subset V\cong G/H$ where $f(x)$ has strictly less than  $r_0(\mathcal{O},f)$ prime factors has a non-trivial polynomial vanishing on it, and so the same is true of its inverse image in $\tilde{G}(\mathbb{Z})=\Lambda$, namely the set of $\tilde{x}\in \Lambda$ where $\phi^\ast(f)(\tilde{x})$ has less than $r_0(\mathcal{O},f)$ prime factors. Hence 
 $r_0\left(\Lambda, \phi^\ast(f)\right)\ge r_0(\mathcal{O},f)$. Second, to see that the set $Z^0$ of $\tilde{x}\in \Lambda$ where $\phi^\ast(f)(\tilde{x})$ has at most $ r_0(\mathcal{O},f)$ prime factors is Zariski dense in $\tilde{G}$, assume for constradiction that it is not, and let $Z\subsetneq \tilde{G}$ denote its Zariski closure. Clearly, if $\tilde{H}(\mathbb{Z})=\tilde{H}\cap \tilde{G}(\mathbb{Z})$, then $Z^0 \tilde{h}=Z^0$ for every $\tilde{h}\in \tilde{H}(\mathbb{Z})$, and hence 
  $Z\tilde{h}=Z$ for every $\tilde{h}\in \tilde{H}(\mathbb{Z})$. It follows that 
  $Z\tilde{h}=Z$ for every $\tilde{h}\in \tilde{H}$, since $\tilde{H}$ is the Zariski closure of $\tilde{H}(\mathbb{Z})$. This is a consequence of the Borel density theorem, since $\tilde{H}$ is a (semi)simple algebraic group defined over $\mathbb{Q}$ in the examples under consideration, and $\tilde{H}(\mathbb{Z})\subset \tilde{H}$ is a lattice subgroup. As a result, $\phi(Z)\subset \vgen\cong G/H$ is a proper Zariski closed subset containing the set of all $x\in \mathcal{O}$ 
  having the property that $f(x)$ is the product at most $r_0(\mathcal{O},f)$ prime factors. Since the latter set is Zariski dense by definition of $r_0(\mathcal{O},f)$, we have arrived at a contradiction, and as a result $r_0(\mathcal{O}, f)=r_0\left(\Lambda, \phi^\ast(f)\right)$.

%
%

 Theorems \ref{thm: quadratic form variety},
\ref{thm: Pfaffian variety}, and \ref{thm: Rectangular matrices variety} establish that $r_0(\mathcal{O}, f)$ is equal to the number of irreducible factors of $f$ in $\mathbb{Q}[G/H]=\mathbb{Q}[\vgen]$, namely $d=\dim V$. 
Thus the verification of the conjecture for 
the pair $\left(\Lambda, \phi^\ast(f)\right)$ will be complete upon showing that the number of irreducible factors of $\phi^\ast(f)$ in $\mathbb{Q}[\tilde{G}]$ is equal to $r_0(\mathcal{O},f)$, and no more. Let $\phi^\ast(f)=h_1\cdots h_s$ be the decomposition into non-trivial irreducibles in the unique factorization domain $\mathbb{Q}[\tilde{G}]$, and assume for contradiction $s > d= r_0(\mathcal{O},f)$.  
The group of real points $\tilde{G}(\mathbb{R})$ is contained in $M_n(\RR)$ for some $n$, and coincides with the set of common zeros of an ideal $\mathcal{J}$ in the polynomial ring $\mathbb{Q}[\{t_{i,j}\}_{i,j=1}^n]$. 
We can represent each element $h_i$ in the ring $\mathbb{Q}[\tilde{G}]$ in the form $\frac{a_i}{b_i} h_i^\prime + \mathfrak{j}_i$, where $h_i^\prime\in \mathbb{Q}[\{t_{i,j}\}_{i,j=1}^n]$ is a polynomial with integral coefficients whose greatest common divisor is $1$ (for definiteness), $a_i,b_i\in \mathbb{Z}\setminus\{0\}$, and $\mathfrak{j}_i\in \mathcal{J}$.  Then $\phi^\ast(f)=\frac{a}{b}h_1^\prime\cdots h_s^\prime +\mathfrak{j}$
and for $\tilde{x}\in \Lambda$, 
and $\phi(\tilde{x})=x=(x_1,\dots,x_d)\in \mathcal{O}\subset \vgen(\mathbb{Z})$, we have the identity (since $\mathfrak{j}(\tilde{x})=0$)  
$$\phi^\ast(f)(\tilde{x})=\frac{a}{b} h_1^\prime(\tilde{x})\cdots h^\prime_s(\tilde{x})=f(\phi(\tilde{x}))=f(x)=x_1\cdots x_d $$
For $\tilde{x}\in \tilde{G}(\mathbb{Z})$, each $h^\prime_i(\tilde{x})$ is an integer, and so the previous identity represents the integer $f(x)$ as a product of $d$ integers, and as a product of $s> d$ integers and the rational number $\frac{a}{b}$, where we assume $(a,b)=1$. 

 Consider the set $\tilde{x}\in \tilde{G}(\mathbb{Z})=\Lambda$ where the integer $f(\phi(\tilde{x}))$ is a product of {\it exactly} $d=r_0(\mathcal{O},f)$ prime factors. This set is clearly Zariski dense by definition of $r_0(\Lambda, \phi^\ast(f))$ and the fact that it is equal to $r_0(\mathcal{O}, f)$. However, $f(\phi(\tilde{x}))$ is also a product of $s> d$ integer factors $h^\prime_i(\tilde{x})$ and $\frac{a}{b}$.  For this to happen $b$ must cancel against some of the factors dividing $h_i^\prime(\tilde{x})$, and in addition possibly some of the factors $h_i^\prime(\tilde{x})$ are equal to $\pm 1$. It follows that the set in question is contained in the union of the zero sets of the polynomials $h^\prime_i\pm 1$ and $h_i^\prime\pm c$, where $c$ ranges over all the factors of $b$, and $1\le i\le 
 s$.  The latter condition is a consequence of the fact that $s > d$. This last set is not Zariski dense in $\tilde{G}(\mathbb{\RR})$, because if it were, one of the polynomials $h_i^\prime\pm c, h_i^\prime\pm 1$ would vanish identically on $\tilde{G}(\mathbb{R})$, but we have assume that $h_i=\frac{a_i}{b_i}h_i^\prime+\mathfrak{j}_i$ is a non-trivial irreducible element in the ring $\mathbb{Q}[\tilde{G}]$, namely not a constant and not a unit.
 Therefore we have arrived at a contradiction, and we can conclude that the conjecture is verified for the pair $(\tilde{G}(\mathbb{Z}),\phi^\ast(f))$. In fact, the arguments above verify the conjecture for the polynomial $\phi^\ast(f)$ and any Zariski dense subgroup $\Lambda\subset \tilde{G}(\mathbb{Z})$ which is transitive on $\mathcal{O}$, and  satisfies also that $\Lambda\cap \tilde{H}(\mathbb{Z})$ is Zariski dense in $\tilde{H}$.

\section{The method of proof: prime solutions to linear equations\label{sec: Prime solutions - Vaughan}}

The varieties that we consider in the present paper are of the form
\begin{equation}
\vgen_{m}=\left\{ x:\inpol\left(x\right)=m\right\} \label{eq: general form of variety}
\end{equation}
with $0\neq m\in\ZZ$,
\[
x=\left(\xi,y,z\right)=\left(\left(\xi_{1},\ldots,\xi_{n}\right),\left(y_{1},\ldots,y_{N}\right),\left(z_{1},\ldots,z_{k}\right)\right),
\]
and
\begin{equation}
\inpol\left(x\right)=F_{1}\left(y\right)\xi_{1}+\ldots+F_{n}\left(y\right)\xi_{n}+G\left(z\right),\label{eq: def of pol Delta}
\end{equation}
where $G$ and $F_{i}$ for all $i=1,\ldots,n$ are polynomials with
integer coefficients, and each $F_i$, $i=1,\ldots,n$ is not the zero polynomial.

For example, the determinant of a matrix $x\in\mbox{Mat}_{n}$ can
be expanded along the $i$-th row, and is therefore ``a linear combination''
of the variables $\left(\xi_{1},\ldots,\xi_{n}\right)=\left(x_{i,1},\ldots,x_{i,n}\right)$,
where the ``coefficients'' are polynomials in the remaining entries
of $x$. As for the remaining examples, we shall verify later on that
these varieties indeed share this structure.

When the variables $y$ and $z$ assume fixed integer values, the equation
$\inpol\left(x\right)=m$ becomes a non-homogeneous linear Diophantine
equation in the variables $\left\{ \xi_{i}\right\} _{i=1}^{n}$. Under
certain necessary congruence conditions on the coefficients, such
equations can be solved in primes:
\begin{thm}[Vaughan, \cite{Vaughan}]
\label{thm: Vaughan}Let $\a_{1},\ldots,\a_{n},m\in\ZZ\setminus\left\{ 0\right\} $
where $n\geq3$, and consider the equation:
\[
\a_{1}\xi_{1}+\ldots+\a_{n}\xi_{n}=m.
\]
Let $T$ be a large positive integer, and let:
\[
\solspace_{\mbox{prime}\leq T}=\left\{ \left(p_{1},\ldots,p_{n}\right)\left|\begin{split} & p_{i}\mbox{ are prime}\\
 & \left|p_{i}\right|\leq T\\
 & \a_{1}p_{1}+\ldots+\a_{n}p_{n}=m
\end{split}
\right.\right\} .
\]
Then for every fixed large $C>0$
\begin{equation}
\left|\solspace_{\mbox{prime}\leq T}\right|\geq\mathfrak{S}\cdot\frac{T^{n-1}}{\left(\log T\right)^{n}}+O_{C}\left(\frac{T^{n-1}}{\left(\log T\right)^{C}}\right)\label{eq: vaughan: H_prime}
\end{equation}
where $\mathfrak{S}>0$ if and only if for all $i=1,\ldots,n$
\begin{equation}
\gcd\left(\a_{1},\ldots,\a_{n}\right)=\gcd\left(\left\{ \a_{1},\ldots,\a_{n},m\right\} \setminus\left\{ \a_{i}\right\} \right)\label{eq: vaughan: gcd's are equal}
\end{equation}
 and
\begin{equation}
\a_{1}+\ldots+\a_{n}-m\equiv0\left(\mbox{mod }2\cdot\gcd\left(\a_{1},\ldots,\a_{n},m\right)\right).\label{eq: vaughan: sum is zero mod 2d}
\end{equation}

\end{thm}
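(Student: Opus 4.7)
The plan is to prove the theorem by the Hardy--Littlewood circle method, in the refined form devised by Vaughan for linear equations in primes. First I would introduce the exponential sum
\[
S(\theta;\alpha)=\sum_{\substack{|p|\le T\\p\text{ prime}}}(\log|p|)\,e(\alpha p\theta),\qquad e(x)=e^{2\pi ix},
\]
so that by orthogonality the weighted count $\sum(\log|p_{1}|)\cdots(\log|p_{n}|)$ over tuples in $\solspace_{\mbox{prime}\le T}$ equals
\[
\int_{0}^{1}S(\theta;\alpha_{1})\cdots S(\theta;\alpha_{n})\,e(-m\theta)\,d\theta.
\]
A partial summation at the end recovers the unweighted count with the factor $(\log T)^{-n}$ in (\ref{eq: vaughan: H_prime}). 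I would then decompose $[0,1]$ into major arcs $\mathfrak{M}$, namely Farey neighbourhoods of rationals $a/q$ with $q\le(\log T)^{B}$ and $(a,q)=1$, and the complementary minor arcs $\mathfrak{m}$.

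On the major arcs I would approximate each $S(\theta;\alpha_{i})$ near $a/q$ using Siegel--Walfisz estimates for primes in arithmetic progressions. Summing the resulting product over $a$ and $q$ factorises the main contribution as $\mathfrak{S}\cdot\mathfrak{J}$, where the archimedean factor
\[
\mathfrak{J}=\int_{-\infty}^{\infty}\prod_{i=1}^{n}\left(\int_{-T}^{T}e(\alpha_{i}u_{i}\eta)\,du_{i}\right)e(-m\eta)\,d\eta
\]
is of order $T^{n-1}$, while the singular series $\mathfrak{S}=\prod_{p}\sigma_{p}$ is a product of $p$-adic densities for solutions of $\sum_i\alpha_{i}\xi_{i}\equiv m\pmod{p^{k}}$ with each $\xi_{i}$ constrained to be a $p$-adic unit (the prime condition modulo $p$). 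On the minor arcs I would apply the Vinogradov/Vaughan-identity estimate $\sup_{\theta\in\mathfrak{m}}|S(\theta;\alpha_{i})|\ll_{A}T(\log T)^{-A}$ for any $A>0$, and combine it with a Parseval bound on the remaining factors. The assumption $n\ge 3$ is precisely what allows one minor-arc sup estimate together with two $L^{2}$ factors to beat $T^{n-1}$, thereby absorbing the minor arcs into the error $O_{C}(T^{n-1}/(\log T)^{C})$.

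The main obstacle, and really the heart of the statement, is identifying the positivity criterion for $\mathfrak{S}$. Since $\mathfrak{S}=\prod_{p}\sigma_{p}$ with each $\sigma_{p}\ge 0$, positivity is equivalent to local solvability of $\sum_i\alpha_{i}\xi_{i}\equiv m\pmod{p^{k}}$ with every $\xi_{i}\in\mathbb{Z}_{p}^{\times}$ for all $k$ and all $p$. For an odd prime $p$, any obstruction forces $p$ to divide every $\alpha_{j}$ with $j\neq i$ and to divide $m$, while not dividing $\alpha_{i}$; this contradicts (\ref{eq: vaughan: gcd's are equal}), and conversely a Hensel-lifting argument recovers $\sigma_{p}>0$ whenever (\ref{eq: vaughan: gcd's are equal}) holds. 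At $p=2$ the same gcd condition is needed, but in addition every prime other than $2$ is odd, so the sum $\sum_i\alpha_{i}\xi_{i}$ with $\xi_{i}$ odd is forced to match $m$ modulo $2\gcd(\alpha_{1},\ldots,\alpha_{n},m)$, which is exactly (\ref{eq: vaughan: sum is zero mod 2d}). A careful bookkeeping of the $2$-adic lifting, distinguishing primes equal to the relevant $2$-adic valuation from units, completes the equivalence and yields $\mathfrak{S}>0$ under (\ref{eq: vaughan: gcd's are equal}) and (\ref{eq: vaughan: sum is zero mod 2d}).
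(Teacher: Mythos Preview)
The paper does not actually prove this theorem: it is quoted as Vaughan's result, described as ``a variation on Vinogradov's three-prime theorem'' that ``is stated as an exercise in \cite{Vaughan}'', and the reader is referred to \cite{H12} for the details. So there is no in-paper proof to compare against.

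Your sketch is the standard Hardy--Littlewood circle-method argument that those references carry out, and the outline is correct: the weighted integral representation, the major/minor arc split with Siegel--Walfisz on the majors and the Vinogradov--Vaughan bound on the minors, the use of $n\ge 3$ to get one $L^\infty$ and two $L^2$ savings, and the identification of $\mathfrak{S}$ as a product of local densities whose positivity is governed by local solvability in $p$-adic units. The one place where your write-up is thin is the local analysis: for odd $p$ the obstruction is not quite ``$p$ divides every $\alpha_j$ with $j\neq i$ and $m$ but not $\alpha_i$'' --- rather, after clearing out the common $p$-power from all the $\alpha_j$, one needs that the resulting reduced equation has a solution in $(\mathbb{Z}/p)^\times$, and the failure case is exactly when all but one reduced coefficient vanish mod $p$ while the reduced right-hand side does too; unwinding this gives precisely condition~(\ref{eq: vaughan: gcd's are equal}). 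At $p=2$ the additional parity constraint on odd $\xi_i$ is what produces~(\ref{eq: vaughan: sum is zero mod 2d}), as you say. With that local computation made precise (and a check that the Euler product converges absolutely so that positivity of each factor suffices), your argument matches what is in the cited references.
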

Theorem \ref{thm: Vaughan} is actually a variation on Vinogradov's
three-prime theorem (\cite{Vin_37}; see also \cite{Vaughan} and
\cite{NathansonAdditive}) and is stated as an exercise in \cite{Vaughan};
for the details of the proof, see \cite{H12}. The following is a
consequence of Theorem \ref{thm: Vaughan}.
\begin{cor}
\label{cor: prime solutions to affine equation}Let $\solspace$ be
the affine space of solutions to the non-homogeneous linear equation:
\[
\a_{1}\xi_{1}+\ldots+\a_{n}\xi_{n}=m
\]
where $\a_{1},\ldots,\a_{n},m\in\ZZ\setminus\left\{ 0\right\} $ and
$n\geq3$. Let $\solspace_{\mathrm{prime}}\subset\solspace$ be the
set of prime points in $\solspace$, namely the set of prime vector solutions
to the given equation. Assume the integers $\a_{1},\ldots,\a_{n},m$
satisfy conditions \ref{eq: vaughan: gcd's are equal} and \ref{eq: vaughan: sum is zero mod 2d}
stated in Theorem \ref{thm: Vaughan}. Then $\solspace_{\mathrm{prime}}$
is Zariski-dense in $\solspace$.
\end{cor}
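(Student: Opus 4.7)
The plan is to derive a contradiction from Vaughan's lower bound on the number of prime solutions in a bounded box by producing an incompatible upper bound whenever the prime set fails to be Zariski-dense. Concretely, I would suppose for contradiction that $\solspace_{\mathrm{prime}}$ is contained in some proper Zariski-closed subset of the affine hyperplane $\solspace \subset \RR^n$. Since every $\a_i \neq 0$, $\solspace$ is irreducible of dimension $n-1$, and any such proper closed subset has dimension at most $n-2$.

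To make the counting concrete, I would parametrize $\solspace$ by $\Af^{n-1}$ via $(\xi_1,\ldots,\xi_{n-1}) \mapsto (\xi_1,\ldots,\xi_{n-1}, \a_n^{-1}(m - \sum_{i<n}\a_i\xi_i))$. A non-trivial polynomial on $\solspace$ vanishing on all prime solutions pulls back under this parametrization to a non-zero polynomial $Q \in \RR[\xi_1,\ldots,\xi_{n-1}]$, and each prime solution $(p_1,\ldots,p_n) \in \solspace_{\mathrm{prime}}$ with $|p_i|\leq T$ corresponds to an integer tuple $(p_1,\ldots,p_{n-1})$ with $|p_i|\leq T$ lying on the hypersurface $\{Q=0\} \subset \Af^{n-1}$.

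The heart of the argument is the elementary lattice-point bound: the number of integer tuples $(x_1,\ldots,x_{n-1}) \in \ZZ^{n-1}$ with $\|x\|_\infty \leq T$ satisfying $Q(x) = 0$ is $O_Q(T^{n-2})$. This is proved by induction on $n-1$: write $Q$ as a polynomial in a variable in which its degree is positive, and use that for all but a lower-dimensional set of the remaining variables, the resulting one-variable polynomial has at most $\deg Q$ roots, while the exceptional set contributes a lower-dimensional problem handled by induction. Combined with the previous paragraph, this yields $|\solspace_{\mathrm{prime}} \cap [-T,T]^n| = O(T^{n-2})$.

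On the other hand, under the congruence hypotheses \ref{eq: vaughan: gcd's are equal} and \ref{eq: vaughan: sum is zero mod 2d}, Theorem \ref{thm: Vaughan} yields $|\solspace_{\mathrm{prime}} \cap [-T,T]^n| \gg T^{n-1}/(\log T)^n$, which dominates $T^{n-2}$ for $T$ sufficiently large. This contradiction completes the proof. The only step with any real content is the hypersurface lattice-point bound; everything else is routine, and the hypothesis $n \geq 3$ enters solely through Theorem \ref{thm: Vaughan}.
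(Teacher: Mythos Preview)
Your proposal is correct and follows essentially the same approach as the paper: assume the prime solutions lie in a proper closed subvariety of the $(n-1)$-dimensional hyperplane $\solspace$, obtain an $O(T^{n-2})$ upper bound for the number of integer (hence prime) points in a box of side $2T$, and contradict Vaughan's lower bound $\gg T^{n-1}/(\log T)^{n}$. The only cosmetic difference is that the paper invokes \cite[Lemma~1]{Lang_Weil_54} for the $O(T^{n-2})$ bound on lattice points in a hypersurface, whereas you supply the standard elementary induction argument directly; both are equally valid here.
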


\paragraph{Notation: $Z\left(h\right)$.}

For a polynomial $h\left(\xi_{1},...,\xi_{n}\right)$, we let $Z\left(h\right)$
denote the zero-set of $h$, namely the set $Z\left(h\right)=\left\{ \left(\xi_{1},...,\xi_{n}\right)\mid h\left(\xi_{1},...,\xi_{n}\right)=0\right\} $.

\begin{proof}
$\mathcal{H}$ is a translation of a linear space of dimension $n-1$
and therefore $\solspace\cong\Af^{n-1}$. In particular, $\solspace$
is irreducible. Assume $\solspace_{\mathrm{prime}}$ is not Zariski-dense
in $\solspace$. Since $\solspace$ is irreducible, this means that
there exists a polynomial $h\left(\xi_{1},...,\xi_{n}\right)$ such
that $h\left(\xi_{1},...,\xi_{n}\right)=0$ for all $\left(\xi_{1},...,\xi_{n}\right)\in\solspace_{\mathrm{prime}}$,
and $h\not\equiv0$ on $\solspace$. We may assume $h$ to be irreducible.
$\solspace$ is an irreducible affine variety of dimension $n-1$,
hence any proper closed hyper-surface inside it is of dimension $n-2$.
In particular, the zero set $Z\left(h\right)$ is of dimension $n-2$.
 It follows (\cite[Lemma 1]{Lang_Weil_54}) that the number of integer
points inside
\begin{equation}
\solspace\cap\left\{ \left(\xi_{1},...,\xi_{n}\right):h\left(\xi_{1},...,\xi_{n}\right)=0\right\} \cap\left\{ \left(\xi_{1},...,\xi_{n}\right):|\xi_{i}|\leq T\mbox{ for all }i\right\} \label{eq: H intersection Z(h)}
\end{equation}
is bounded by a constant times $T^{n-2}$, namely it is $O\left(T^{n-2}\right)$.
As we assume $\solspace_{\mathrm{prime}\leq T}$ is contained in the
set \ref{eq: H intersection Z(h)}, this contradicts \ref{eq: vaughan: H_prime}
with $\mathfrak{S}>0$.
\end{proof}
Theorem \ref{thm: Vaughan} and specifically Corollary \ref{cor: prime solutions to affine equation}
are the key ingredients of the proof of Theorem \ref{thm: determinant variety},
as well as of the examples that we consider in this paper. We shall
use the technique presented in \cite{NS10} as follows.
\begin{thm}
\label{thm: Prime solutions variety}Let $m\in\ZZ$, $m \neq 0$ and consider a
variety $\vgen_{m}$ of the form \ref{eq: general form of variety},
with $F_{1},\ldots,F_{n}$ and $G$ as in \ref{eq: def of pol Delta}.
Assume that $G\left(z\right)$ is not identically equal to $m$, and
that there exists a Zariski-dense subset $\G\subset\RR^{N+k}$
whose elements are prime vectors $\left(y,z\right)\in\ZZ^{N+k}$
that satisfy
\[
F_{1}\left(y\right)+\ldots+F_{n}\left(y\right)+G\left(z\right)-m\equiv0\left(\mbox{mod }2\cdot\gcd\left(F_{1}\left(y\right),\ldots,F_{n}\left(y\right),G\left(z\right)-m\right)\right),
\]
\[
\gcd\left(\left\{ F_{1}\left(y\right),\ldots,F_{n}\left(y\right),G\left(z\right)-m\right\} \setminus\left\{ F_{j}\left(y\right)\right\} \right)=\gcd\left(F_{1}\left(y\right),\ldots,F_{n}\left(y\right)\right)
\]
for all $j=1,\ldots,n$. Then  prime points are Zariski-dense in
$\vgen_{m}$.\end{thm}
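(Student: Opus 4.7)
The plan is to reduce the Zariski-density of prime points in $\vgen_m$ to the Zariski-density of prime solutions of a family of non-homogeneous affine linear Diophantine equations parametrized by $(y, z)$, apply Corollary~\ref{cor: prime solutions to affine equation} fiber-by-fiber, and then globalize via a polynomial division argument.

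First, for each $(y, z) \in \G$, which I further restrict to the Zariski-dense subset where $G(z) \neq m$ (using the assumption $G \not\equiv m$), the equation $\inpol(x) = m$ specializes to the linear Diophantine equation
$$F_1(y)\xi_1 + \ldots + F_n(y)\xi_n \;=\; m - G(z),$$
whose integer coefficients satisfy conditions~(\ref{eq: vaughan: gcd's are equal}) and~(\ref{eq: vaughan: sum is zero mod 2d}) of Theorem~\ref{thm: Vaughan} by hypothesis. Hence Corollary~\ref{cor: prime solutions to affine equation} yields that prime solutions form a Zariski-dense subset of the affine hyperplane $\solspace_{y,z} \subset \RR^n$ of real solutions.

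Now suppose, for contradiction, that the prime points in $\vgen_m$ are not Zariski-dense, so that some nonzero polynomial $P \in \RR[\xi, y, z]$ vanishes on all prime points of $\vgen_m$ without vanishing on $\vgen_m$. For every $(y, z) \in \G$, the specialization $P(\,\cdot\,, y, z)$ vanishes on the (Zariski-dense) set of prime solutions in $\solspace_{y,z}$, and hence on all of $\solspace_{y,z}$. To promote this fiberwise vanishing to a polynomial identity on $\vgen_m$, I would view $\inpol - m$ as a polynomial of degree one in $\xi_1$ with leading coefficient $F_1(y) \not\equiv 0$ and perform Euclidean division in $\RR(y, z)[\xi_1, \ldots, \xi_n]$. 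Clearing the denominator $F_1(y)^M$ with $M = \deg_{\xi_1} P$ produces an identity
$$F_1(y)^M \, P(\xi, y, z) \;=\; \tilde Q(\xi, y, z)\,(\inpol - m) \;+\; R(\xi_2, \ldots, \xi_n, y, z)$$
in $\RR[\xi, y, z]$, where $R$ is free of $\xi_1$. Restricting this identity to $\solspace_{y,z}$ for each $(y, z) \in \G$ with $F_1(y) \neq 0$ forces $R(\xi_2, \ldots, \xi_n, y, z) = 0$ identically in $\xi_2, \ldots, \xi_n$; so every coefficient of $R$ (viewed as a polynomial in $\xi_2, \ldots, \xi_n$ with coefficients in $\RR[y, z]$) vanishes on a Zariski-dense subset of $\RR^{N+k}$ and is therefore identically zero. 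Hence $R \equiv 0$ and $F_1(y)^M P = \tilde Q\,(\inpol - m)$ as polynomials, so $P$ vanishes on the Zariski-open subset $\vgen_m \cap \{F_1(y) \neq 0\}$.

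To finish the contradiction, I must check that $P$ vanishes on every irreducible component of $\vgen_m$. Since $\vgen_m$ is the zero locus of a single nonzero polynomial, every component has codimension one in $\RR^{n+N+k}$. A component on which every $F_j$ ($j = 1, \ldots, n$) vanished identically would lie in $\{F_1 = \cdots = F_n = 0\} \cap \{G(z) = m\} \times \RR^n$, a locus of dimension strictly less than $n + N + k - 1$, which is impossible. Thus on every component some $F_j$ is not identically zero, and repeating the division argument with $\xi_j$ in place of $\xi_1$ yields $P \equiv 0$ on that component, and hence on $\vgen_m$, contradicting the choice of $P$. The most delicate step in this plan is exactly this globalization via polynomial division: the density hypothesis on $\G$ lives in $\RR^{N+k}$, not in $\vgen_m$ itself, and the division identity is what converts fiberwise vanishing over a Zariski-dense set of parameters $(y, z)$ into a genuine polynomial identity modulo $\inpol - m$.
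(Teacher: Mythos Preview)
Your argument is correct and takes a genuinely different route from the paper. The paper proceeds by first establishing that $\inpol - m$ is irreducible (Lemma~\ref{lem: Delta is irreducible}, applied with $G$ replaced by $G - m$), so that $\vgen_m$ is an irreducible subvariety of $\Af^n \times \Af^{N+k}$, and then invokes a general fibration lemma (Lemma~\ref{lem: for each set there is set- dense}): if $A$ is Zariski-dense in the base and each fiber over $a\in A$ contains a Zariski-dense subset $B_a$, then the union $\{(b,a):a\in A,\,b\in B_a\}$ is Zariski-dense in the total space. That lemma is proved using the fact that the image of a dominant morphism contains a nonempty open set. You replace both ingredients with an explicit Euclidean division of $P$ by $\inpol - m$ in the variable $\xi_1$: this converts fiberwise vanishing over the dense parameter set into the polynomial identity $F_1^M P = \tilde Q\,(\inpol - m)$, and your final component-by-component dimension argument substitutes for irreducibility. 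Your route is more elementary and self-contained, avoiding the algebraic-geometry input; the paper's route packages the globalization step into a reusable black box that is also invoked later (Example~\ref{exa: for each set there is a set - dense} and the proof of Theorem~\ref{thm: related polynomials and prime points}). One small omission: when applying Corollary~\ref{cor: prime solutions to affine equation} you must also restrict to the locus where every $F_j(y)\neq 0$, not only $G(z)\neq m$, since Vaughan's theorem requires all coefficients nonzero; this further restriction is still Zariski-dense because each $F_j$ is a nonzero polynomial, and the paper makes exactly this restriction in defining $\tilde{\G}$.
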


\begin{rem}
Except for the variety $\vquad_{m}$ in Theorem \ref{thm: quadratic form variety},
all the examples considered in this paper have  $G\left(z\right)$
which identically equals zero.
\end{rem}
Theorem \ref{thm: Prime solutions variety} is a consequence of Corollary
\ref{cor: prime solutions to affine equation}, along with the following
two observations --- the first is that the varieties $\vgen_{m}$
are irreducible.
\begin{lem}
\label{lem: Delta is irreducible}The polynomial
\[
\inpol\left(\xi,y,z\right)=\sum_{i=1}^{n}F_{i}\left(y\right)\xi_{i}+G\left(z\right),
\]
where $G\left(z\right)$ is not the zero polynomial, is irreducible.
\end{lem}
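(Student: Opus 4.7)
The plan is to view $\inpol$ as a polynomial of degree one in a single chosen variable, say $\xi_1$, over the polynomial ring $R = \KK[y_1,\ldots,y_N,z_1,\ldots,z_k,\xi_2,\ldots,\xi_n]$, and to apply the standard UFD criterion: a degree-one polynomial $A\xi_1+B$ over a UFD $R$ is irreducible in $R[\xi_1]$ precisely when its content $\gcd(A,B)$ is a unit. Since $F_1(y)\neq 0$ by hypothesis, we have a genuine degree-one polynomial with leading coefficient $A=F_1(y)$ and constant term $B=\sum_{i=2}^{n}F_i(y)\xi_i+G(z)$. By Gauss's lemma, irreducibility in $R[\xi_1]$ will then propagate to irreducibility in the full polynomial ring $\KK[\xi,y,z]$.

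The core step is thus to show that any common divisor $h\in R$ of $A$ and $B$ is a unit. Since $h\mid F_1(y)$ and $F_1$ involves only the $y$-variables, $h$ itself must lie in $\KK[y]$. Substituting back into $h\mid B$, and viewing $B$ as a polynomial in the variables $\xi_2,\ldots,\xi_n$ whose coefficients lie in $\KK[y,z]$, we obtain $h(y)\mid F_i(y)$ for each $i\ge 2$ and, crucially, $h(y)\mid G(z)$.

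The decisive use of the hypothesis $G\not\equiv 0$ enters here: if $h(y)\mid G(z)$ in $\KK[y,z]$, then writing $G(z)=h(y)q(y,z)$ and comparing degrees in $y$ (the left side has $y$-degree zero while the right side has $y$-degree $\ge \deg_y h$) forces $\deg_y h=0$, so $h$ is a constant, hence a unit. This is the only place where we need $G\neq 0$, and it is precisely what fails when $G\equiv 0$: in that case a common nonconstant factor of the $F_i(y)$ would yield a nontrivial factorization, consistent with the remark that those examples are handled separately.

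I do not expect any real obstacle; the argument is essentially a content computation in a UFD, with the only conceptual content being the observation in the previous paragraph that a polynomial in $y$ alone cannot nontrivially divide a nonzero polynomial in $z$ alone. The plan would be: (i) state the reduction to $R[\xi_1]$ and the UFD irreducibility criterion for linear polynomials, (ii) reduce any common divisor to an element of $\KK[y]$ using $h\mid F_1(y)$, (iii) apply the $y$-degree comparison to $h\mid G(z)$ to conclude $h\in\KK^\times$, and (iv) invoke Gauss's lemma.
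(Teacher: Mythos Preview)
Your proof is correct and follows essentially the same approach as the paper: both arguments exploit the separated variable structure to show that any nontrivial factor would have to divide both a nonzero polynomial in $y$ alone and the nonzero polynomial $G(z)$, forcing it to be constant. One minor redundancy: since $R[\xi_1]$ already \emph{is} the full polynomial ring $\KK[\xi,y,z]$, the final appeal to Gauss's lemma is unnecessary.
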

\begin{proof}
Suppose $\inpol=f\cdot h$. Assume without the loss of generality
that $\deg_{\xi_{1}}f\geq1$ (namely $\deg_{\xi_{1}}f=1$), and therefore
$\deg_{\xi_{1}}h=0$. Since monomials of the form $\xi_{1}\xi_{i}$
do not appear in $\inpol$, it follows that for every $i=i,\ldots,n$
$\deg_{\xi_{i}}f=1$ and $\deg_{\xi_{i}}h=0$. Thus,
\begin{align*}
f & =\sum_{i=1}^{n}f_{i}\left(y,z\right)\xi_{i}+g\left(y,z\right)\\
h & =h\left(y,z\right).
\end{align*}
It follows that
\[
G\left(z\right)=g\left(y,z\right)\cdot h\left(y,z\right),
\]
which implies
\begin{align*}
g\left(y,z\right) & =g\left(z\right)\\
h\left(y,z\right) & =h\left(z\right).
\end{align*}
Hence
\[
\inpol=\left(\sum_{i=1}^{n}f_{i}\left(y,z\right)\xi_{i}+g\left(z\right)\right)\cdot\left(h\left(z\right)\right).
\]
Then
\[
F_{i}\left(y\right)=f_{i}\left(y,z\right)\cdot h\left(z\right),
\]
namely $f_{i}\left(y,z\right)=F_{i}\left(y\right)$ and $h\left(z\right)$
is a scalar.
\end{proof}

We note that the homogeneous varieties we consider each constitutes an orbit of a connected algebraic group, and hence are clearly irreducible. But for the non-homogeneous varieties we consider the previous argument is necessary.

Let us now formulate the context in which Vaughan's criterion will be applied. In what follows, we use the term "algebraic variety" as an abbreviation for the term "the set of real points of an algebraic variety defined over $\mathbb{R}$", which describes all the varieties we will consider in the present paper.  

\global\long\def\denset{{\cal T}}
\global\long\def\var{X}

\begin{lem}
\label{lem: for each set there is set- dense}Let $\Xi$ and $Y$
be algebraic varieties, and let $\var\subset\Xi\times Y$ be an
irreducible subvariety. Let $A$ be a Zariski-dense subset of $Y$
such that for every $y\in A$ the fiber $\var_{y}:=\left\{ \xi:\left(\xi,y\right)\in\var\right\} $
is non-empty. Assume that for every $y\in A$ there exists a Zariski-dense
subset $B_{y}$ in $\var_{y}$. Then, the set
\[
\denset=\left\{ \left(\xi,y\right):y\in A,\xi\in B_{y}\right\}
\]
is Zariski-dense in $\var$.
\end{lem}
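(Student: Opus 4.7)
The plan is to argue by contradiction: suppose $\denset$ is not Zariski-dense in $\var$, and let $W$ denote its Zariski closure inside $\var$, a proper closed subvariety since $\var$ is irreducible. The key reduction step is to show that this closure must already contain the full fiber $\var_{y}$ for every $y\in A$. Indeed, for each such $y$ the inclusion $B_{y}\times\{y\}\subseteq\denset\subseteq W$ holds, and since $W_{y}:=\{\xi:(\xi,y)\in W\}$ is Zariski-closed in $\var_{y}$ and contains the Zariski-dense subset $B_{y}$, it must equal $\var_{y}$. The remainder of the argument establishes that no proper closed subvariety of $\var$ can have this property, via a fiber-dimension count.

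Next I would set up the dimension machinery. Let $\pi:\Xi\times Y\to Y$ denote the second projection. The hypothesis $A\subseteq\pi(\var)$ together with $\overline{A}=Y$ forces $\pi\vert_{\var}$ to be dominant, and since $\var$ is irreducible, its image's closure $\overline{\pi(\var)}=Y$ is irreducible as well. Set $d=\dim\var$ and $e=\dim Y$, and decompose $W=W_{1}\cup\cdots\cup W_{r}$ into irreducible components, each of dimension at most $\dim W<d$. The generic-fiber theorem supplies a Zariski-dense open $U_{0}\subseteq Y$ on which $\dim\var_{y}=d-e$; for each component $W_{i}$ with $\overline{\pi(W_{i})}=Y$ there is a dense open $U_{i}\subseteq Y$ on which $\dim(W_{i})_{y}=\dim W_{i}-e<d-e$, while for the remaining components $\pi(W_{i})$ lies in a proper closed subvariety $C_{i}\subsetneq Y$, so $(W_{i})_{y}$ is empty for $y\notin C_{i}$.

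Intersecting $U_{0}$, the sets $U_{i}$, and the complements $Y\setminus C_{i}$ yields a Zariski-dense open $U^{\ast}\subseteq Y$ on which $\dim W_{y}<d-e=\dim\var_{y}$. Because $A$ is Zariski-dense in the irreducible variety $Y$, the intersection $A\cap U^{\ast}$ is non-empty, and any $y\in A\cap U^{\ast}$ contradicts the equality $W_{y}=\var_{y}$ established in the first step. The step I expect to be most delicate is the bookkeeping in this final dimension count: one has to handle the two cases (dominant versus non-dominant components $W_{i}$) uniformly to guarantee that $U^{\ast}$ is actually dense and that every $y\in A\cap U^{\ast}$ satisfies the strict inequality on fiber dimensions. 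Once that organization is in place, the contradiction is immediate and the lemma follows.
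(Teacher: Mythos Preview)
Your argument is correct but follows a genuinely different route from the paper's. The paper proceeds directly: given a nonempty open $U \subseteq X$, it invokes the fact that the image of a morphism contains a nonempty open subset of its closure (after passing to the complexification $X(\CC) \to Y(\CC)$, citing Springer) to show that the projection of $U$ contains a nonempty open subset $W$ of $Y$; it then picks $y \in A \cap W$ and observes that the nonempty open subset $U \cap X_y$ of the fiber must meet the dense set $B_y$. You instead argue by contradiction through dimension theory: the closure $W$ of $\denset$, if proper in $X$, has $\dim W < \dim X$, yet contains the full fiber $X_y$ for every $y \in A$; the generic-fiber-dimension theorem then produces a point $y \in A$ with $\dim W_y < \dim X_y$, which is a contradiction. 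The paper's approach is a little lighter in that it needs only constructibility of images rather than the full fiber-dimension machinery and the irreducible decomposition of $W$; your approach, on the other hand, makes the underlying obstruction (a strictly lower-dimensional closed set cannot contain full generic fibers over a dense set in the base) very explicit. One small caveat worth flagging: since the paper's standing convention is that ``algebraic variety'' means the set of real points of a variety defined over $\RR$, your appeal to the generic-fiber-dimension theorem should, like the paper's use of Chevalley, pass through the complexification to be fully rigorous; this is a routine adaptation and not a gap in the argument.
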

\begin{proof}
We let $U\subset\var$ be a non-empty open set, and show that $U$
contains a point from $\denset$. Let $\phi:\var\to Y$ be the natural
projection, which is defined over $\mathbb{R}$ and has a Zariski-dense image, by assumption.


Since $\var(\CC)$ is irreducible and $U(\CC)$ is open and non-empty,
$U(\CC)$ is irreducible and dense in $\var(\CC)$. Therefore, $\phi\left(U(\CC)\right)$
contains a non-empty open set $W(\CC)$ (see e.g. 
\cite[Theorem. 1.9.5]{SpringerLinearAlgGrps}) of $\overline{\phi\left(U(\CC)\right)}=Y(\CC)$.
Then $W=W(\CC)\cap Y$ is a non-empty open set of $Y$, since the set of real points $Y$ of the variety $Y(\CC)$ which is defined over $\RR$ are Zariski dense in $Y(\CC)$ and thus intersect every non-empty open set (see e.g.  \cite[Chap. AG, Cor. 13.3]{Bo}). 
Since $A$ is dense in $Y$ it intersects $W$; let
\[
y\in A\cap W\subseteq A\cap\phi\left(U\right).
\]
Then $\phi^{-1}\left(y\right)\cap U$ is non-empty, and clearly open
in $\phi^{-1}\left(y\right)$. By projecting to $\Xi$, we may identify
$\phi^{-1}\left(y\right)$ with $X_{y}$, and $\phi^{-1}\left(y\right)\cap U$
with an open subset of $X_{y}$; this open subset intersects $B_{y}$,
which is assumed to be Zariski-dense in $X_{y}$. Let $\xi$ be a
point in this intersection; then $\left(\xi,y\right)$ is contained
in $U\cap\denset$.
\end{proof}
The following special case of Lemma \ref{lem: for each set there is set- dense},
where all the fibers $X_{y}$ coincide,
will be used later on.
\begin{example}
\label{exa: for each set there is a set - dense}Let $A\subset\Af^{m}$
be a Zariski-dense subset such that that for every $a\in A$ there
exists a subset $B_{a}\subset\Af^{n}$ which is Zariski-dense in $\mbox{\ensuremath{\Af}}^{n}$.
Then the set
\[
\denset=\left\{ \left(a,b_{a}\right)\mid a\in A,b_{a}\in B_{a}\right\}
\]
is Zariski-dense in $\Af^{m+n}$.
\end{example}
We conclude this section with a proof of
Theorem \ref{thm: Prime solutions variety}.

\begin{proof}
For every $\left(y,z\right)$, let $\solspace\left(y,z\right)$ denote
the space of solutions to the non-homogeneous linear equation:
\[
F_{1}\left(y\right)\xi_{1}+\ldots+F_{n}\left(y\right)\xi_{n}+G\left(z\right)=m,
\]
and let $\solspace_{\mbox{prime}}\left(y,z\right)\subset\solspace\left(y,z\right)$
denote the subset of prime solutions to this equation. Define
\[
\tilde{\G}=\left\{ \left(y,z\right)\in\G\mid G\left(z\right)\neq m\mbox{ and }\forall j:\, F_{j}\left(y\right)\neq0\right\} \subset\G,
\]
and note that $\tilde{\G}$ is Zariski-dense in $\RR^{N+k}$,
since it is obtained from $\G$ by removing its intersection with
the two Zariski-closed subsets defined by $\left\{ G\left(z\right)=m\right\} $
and $\cup_{j=1}^{n}\left\{ F_{j}\left(y\right)=0\right\} $, which are proper subsets since we assume each $F_{j}\left(y\right)$ is not the zero polynomial and $G(z)$ is not the constant $m$. Moreover, by Corollary \ref{cor: prime solutions to affine equation}, for every $\left(y,z\right)\in\tilde{\G}$, the fiber $\solspace\left(y,z\right)$
is non-empty,  and 
$\solspace_{\mbox{prime}}\left(y,z\right)$ is Zariski-dense in $\solspace\left(y,z\right)$.  

In the notations of Lemma \ref{lem: for each set there is set- dense},
take $\Xi=\Af^{n}$, $Y=\Af^{N+k}$ and
$X=\vgen_{m}$; note that $\vgen_{m}\subset\Af^{n}\times\Af^{N+k}$
is irreducible by Lemma \ref{lem: Delta is irreducible}. The proof
is concluded by Lemma \ref{lem: for each set there is set- dense},
when taking $A=\tilde{\G}$, $\var_{y}=\solspace\left(y,z\right)$
and $B_{y}=\solspace_{\mbox{prime}}\left(y,z\right)$.
\end{proof}

In order to apply Theorem \ref{thm: Prime solutions variety}
for establishing Zariski-density of prime points in varieties of the
form \ref{eq: general form of variety}, one must establish the existence
of a set $\G$ of prime points $\left(y,z\right)$ on which the polynomials
$F_{i}\left(y\right)$ satisfy the congruence conditions defined in
Theorem \ref{thm: Prime solutions variety} with respect to $m$ and
$G\left(z\right)$. This is the topic of Section \ref{sec: Related polynomials}.

\section{Variety defined by a quadratic form\label{sec: Quadratic forms}}

As mentioned above, we shall consider varieties of the form \ref{eq: general form of variety},
where the polynomials $F_{i}\left(y\right)$ that appear as coefficients
in the form \ref{eq: general form of variety} satisfy some general
conditions (to be formulated in Theorem \ref{thm: General Thm}).
However, in the case of the variety $\vquad_{m}$ considered in Theorem
\ref{thm: quadratic form variety}, the polynomials $F_{i}\left(y\right)$
are quite simple, and so we begin by analyzing this example explicitly.
As we shall see later on, this example already demonstrates the main
ideas of the general case.

Recall from Section \ref{sec: Introduction and examples} the following
quadratic form on $\RR^{2n+k}$:
\[
Q{}_{n,k}\left(\xi_{1},\ldots,\xi_{n},y_{1},\ldots,y_{n},z_{1},\ldots,z_{k}\right)=\sum_{i=1}^{n}\xi_{i}y_{i}+\sum_{i=1}^{k}z_{i}^{2},
\]
and the variety:
\[
\vquad_{m}=\left\{ \left(\xi,y,z\right)\in\RR^{2n+k}\mid Q_{n,k}\left(\xi,y,z\right)=m\right\} .
\]

\def\thethmrpt{\ref{thm: quadratic form variety}}
\begin{thmrpt}

Let $n\geq3$, $k\geq0$ and $m\neq0$ be integers. Prime matrices
are Zariski-dense in $\vquad_{m}$ if and only if $n+k\equiv m\left(\mbox{mod }2\right)$.

\end{thmrpt}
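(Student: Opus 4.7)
The plan is to split into necessity and sufficiency, with sufficiency being a direct application of Theorem~\ref{thm: Prime solutions variety}. Writing the defining polynomial of $\vquad_m$ in the form~\ref{eq: def of pol Delta}, we have $F_i(y) = y_i$ for $i=1,\ldots,n$ and $G(z) = \sum_{j=1}^{k} z_j^2$. Each $F_i$ is nonzero and $G(z)\not\equiv m$ (non-constant in $z$ when $k\ge 1$, and identically $0\ne m$ when $k=0$), so irreducibility of $\vquad_m$ follows from Lemma~\ref{lem: Delta is irreducible} applied to $\inpol-m$. For the necessity direction, each of the coordinate hyperplanes $\{\xi_i=2\}$, $\{y_i=2\}$, $\{z_j=2\}$ meets the irreducible variety $\vquad_m$ in a proper Zariski-closed subvariety, so the union of these $2n+k$ proper subvarieties is again a proper closed subset; if prime vectors were Zariski-dense in $\vquad_m$, this union could not contain all of them, so some prime vector must have all coordinates equal to odd primes, and reducing $\sum\xi_i y_i+\sum z_j^2=m$ modulo $2$ forces $m\equiv n+k\pmod 2$.

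For sufficiency, assume $n+k\equiv m\pmod 2$ and take $\G$ to be the set of integer tuples $(y_1,\ldots,y_n,z_1,\ldots,z_k)$ whose entries are pairwise distinct odd primes. The odd primes form an infinite subset of $\RR$, hence are Zariski-dense, and removing the finitely many diagonal hyperplanes $\{y_i=y_j\}$, $\{y_i=z_j\}$, $\{z_i=z_j\}$ from a Zariski-dense set preserves Zariski-density, so iterating Example~\ref{exa: for each set there is a set - dense} shows that $\G$ is Zariski-dense in $\RR^{n+k}$. On $\G$, any $n-1$ of the pairwise distinct primes $y_i$ already have gcd $1$, so the gcd condition~\ref{eq: vaughan: gcd's are equal} is automatic; and the parity condition~\ref{eq: vaughan: sum is zero mod 2d} reduces to $\sum y_i+\sum z_j^2-m\equiv n+k-m\equiv 0\pmod 2$, which is our standing assumption. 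All hypotheses of Theorem~\ref{thm: Prime solutions variety} being satisfied, Zariski-density of prime points in $\vquad_m$ follows.

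The main creative step is the choice of $\G$: insisting that the entries of $(y,z)$ be \emph{pairwise distinct} odd primes is what simultaneously trivializes both the gcd and parity conditions of Theorem~\ref{thm: Prime solutions variety}, and I do not anticipate a harder obstacle than this bookkeeping. One minor technicality is the irreducibility of $\inpol-m=\sum y_i\xi_i-m$ in the degenerate case $k=0$, but this fits the hypothesis of Lemma~\ref{lem: Delta is irreducible} with the nonzero constant $-m$ playing the role of the polynomial $G$, and the proof given there applies verbatim.
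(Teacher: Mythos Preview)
Your proof is correct and follows essentially the same approach as the paper: both directions match, and sufficiency goes through Theorem~\ref{thm: Prime solutions variety} with $F_i(y)=y_i$ and $G(z)=\sum z_j^2$. Your choice of $\G$ (all coordinates pairwise distinct odd primes) is in fact the simpler variant that the paper itself points out in the Remark immediately following its proof; the paper's more elaborate $\G_{\text{quad}}$---choosing $y_1,y_2$ coprime to $m-\sum z_i^2$---is used only to foreshadow the structure of later arguments, not because it is needed here.
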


\begin{proof}[Proof of Theorem \ref{thm: quadratic form variety}.]
The condition $n+k\equiv m\left(\mbox{mod }2\right)$ is necessary:
if prime points are Zariski-dense in $\vquad_{m}$, then there exists
an odd point in $\vquad_{m}$, namely there exist odd integers $\xi_{i},y_{i},z_{i}$
which satisfy the equation $\sum_{i=1}^{n}\xi_{i}y_{i}+\sum_{i=1}^{k}z_{i}^{2}=m$.
Take this equation modulo $2$ to obtain $n+k\equiv m\left(\mbox{mod }2\right)$.

For sufficiency, we apply Theorem \ref{thm: Prime solutions variety}.
The variety $\vquad_{m}$ is of the form \ref{eq: general form of variety},
with $N=n$,
\begin{eqnarray*}
F_{1}\left(y_{1},\ldots,y_{N}\right) & = & y_{1}\\
 & \vdots\\
F_{n}\left(y_{1},\ldots,y_{N}\right) & = & y_{n}
\end{eqnarray*}
and
\[
G\left(z_{1},\ldots,z_{k}\right)=\sum_{i=1}^{k}z_{i}^{2}.
\]
Take $\G_{\mbox{quad}}\subset\RR^{n+k}$ to be the set of integer
points $\left(y_{1},\ldots,y_{n},z_{1},\ldots,z_{k}\right)$ such
that $y_{3},\ldots,y_{n},z_{1},\ldots,z_{k}$ are any odd primes satisfying
$m\neq\sum_{i=1}^{k}z_{i}^{2}$, and $y_{1},y_{2}$ are distinct odd
primes that are co-prime to $m-\sum_{i=1}^{k}z_{i}^{2}$. By Lemma
\ref{lem: for each set there is set- dense} (the case of Example
\ref{exa: for each set there is a set - dense}), $\G_{\mbox{quad}}$
is Zariski-dense in $\RR^{n+k}$. For every $\left(y_{1},\ldots,y_{n},z_{1},\ldots,z_{k}\right)\in\G_{\mbox{quad}}$
it holds that
\[
\gcd\left(y_{1},\ldots,y_{n},m-\sum_{i=1}^{k}z_{i}^{2}\right)=1,
\]
\[
\gcd\left(y_{1},\ldots,y_{n}\right)=1,
\]
\[
\gcd\left(\left\{ y_{1},\ldots,y_{n},m-\sum_{i=1}^{k}z_{i}^{2}\right\} \setminus\left\{ y_{j}\right\} \right)=1
\]
for all $j=1,\ldots,n$ and
\[
y_{1}+\ldots+y_{n}+m-\sum_{i=1}^{k}z_{i}^{2}=0\left(\mbox{mod }2\right)
\]
because of the condition on $m,n,k$. By Theorem \ref{thm: Prime solutions variety},
we conclude that prime points are Zariski-dense in $\vquad_{m}$.\end{proof}
\begin{rem}
It would have been simpler to take $\G_{\mbox{quad}}\subset\RR^{n+k}$
to be the set of integer points $\left(y_{1},\ldots,y_{n},z_{1},\ldots,z_{k}\right)$
such that $y_{4},\ldots,y_{n},z_{1},\ldots,z_{k}$ are any odd primes,
and $y_{1},y_{2},y_{3}$ are different odd primes. The choice of $\G_{\mbox{quad}}$
as in the above proof, however, demonstrates the idea of the proofs
to come.
\end{rem}

\section{Intertwined polynomials \label{sec: Related polynomials}}

Our goal in the present and the following sections is to formulate sufficient
conditions on the coefficients $F_{i}\left(y\right)$ in a variety
of the form \ref{eq: general form of variety},
so that they will satisfy the conditions of Theorem \ref{thm: Prime solutions variety},
implying that prime points Zariski-dense in this variety. In particular,
we wish to be able to control the $\gcd$'s of every $n$-sized subset
of $\left\{ G\left(z\right)-m,F_{1}\left(y\right),\ldots,F_{n}\left(y\right)\right\} $,
for a Zariski-dense subset of prime vectors $\left(y,z\right)$. As a result,
we are interested in the common prime factors of $\left\{ G\left(z\right)-m,F_{1}\left(y\right),\ldots,F_{n}\left(y\right)\right\} $.
For reasons that will be discussed in the next section, the case of
the prime $2$ should be handled separately. In this section, we formulate
conditions on a pair $F,\tilde{F}$ of polynomials, such that there
exists a Zariski-dense subset $\G'$ of prime vectors $y,z$ for which every
pair in $\left\{ G\left(z\right)-m,F\left(y\right),\tilde{F}\left(y\right)\right\} $
has no common prime factors other than $2$. In particular, if the
set $\left\{ F_{1},\ldots,F_{n}\right\} $ contains such a pair, then
for every $\left(y,z\right)\in\G'$ the $\gcd$ of every $n$-sized
subset of $\left\{ G\left(z\right)-m,F_{1}\left(y\right),\ldots,F_{n}\left(y\right)\right\} $
is a power of $2$.

Very briefly put, the property we define asserts that the polynomials in question have an "iterated linear structure", as follows. 
\begin{defn}[A pair of intertwined polynomials]
Let $\mathcal{Y}$ be a set of commutative variables. 
Two polynomials $F,\widetilde{F}$ in the variables $\mathcal{Y}$ with integer coefficients are called
\emph{intertwined of depth $d=1$}, if the set of variables $\mathcal{Y}$ has a decomposition to three mutually disjoint sets 
$$\mathcal{Y}=\mathcal{U}\cup \widetilde{\mathcal{U}}\cup \mathcal{W}=
\left\{ \left(u_{i}\right)_{i},\left(\tilde{u}_{i}\right)_{i},\left(w_{i}\right)_{i}\right\}$$
with $\mathcal{U}$ and $ \widetilde{\mathcal{U}}$ non-empty and of the same size (denoted $k_0$), and $F$, $\widetilde{F}$ of the form 

\begin{equation}
F\left(\mathcal{Y}\right)=\sum_{i=1}^{k_{0}}\a_{i}u_{i}+\b\left(\mathcal{W}\right),\,\tilde{F}\left(\mathcal{Y}\right)=\sum_{i=1}^{k_{0}}\a_{i}\tilde{u}_{i}+\b\left(\mathcal{W}\right)\label{eq: related of depth 1}
\end{equation}
where $\left\{ \a_{1},\ldots,\a_{k_{0}}\right\} $ are integers whose
$\gcd$  is a power of $2$ and $\b\left(w\right)$ is an arbitrary polynomial
with integer coefficients.

In particular, note that keeping the variables in $\mathcal{W}$ fixed, $F$ and $\widetilde{F}$ are linear forms in the two disjoint sets of variables $\mathcal{U}$ and $\widetilde{\mathcal{U}}$, inhomogeneous if $\mathcal{W}\neq \emptyset$ and $\beta\neq 0$.

Continuing inductively, two polynomials $F,\widetilde{F}$ in a set of commuting variables $\mathcal{Y}$ with integer coefficients are called
\emph{intertwined of depth $d\ge 2$}, if $\mathcal{Y}$ has a decomposition to four disjoint sets 

$$\mathcal{Y}=\mathcal{U}\cup \widetilde{\mathcal{U}}\cup \mathcal{V}\cup \mathcal{W}=
\left\{ \left(u_{i}\right)_{i},\left(\tilde{u}_{i}\right)_{i},\left(v_{i}\right)_{i},\left(w_{i}\right)_{i}\right\}$$
with $\mathcal{U}$ and $ \widetilde{\mathcal{U}}$ non-empty and of the same size (denoted $k_d$), and $F$, $\widetilde{F}$ of the form 
$$F\left(\mathcal{Y}\right)=\sum_{i=1}^{k_{d}}\a_{i}\left(\mathcal{V}\right)u_{i}+\b\left(\mathcal{V},\mathcal{W}\right),\,\tilde{F}\left(\mathcal{Y}\right)=\sum_{i=1}^{k_{d}}\a_{i}\left(\mathcal{V}\right)\tilde{u}_{i}+\b\left(\mathcal{V},\mathcal{W}\right)
$$
where for some pair $i_1, i_2$ the polynomials $\a_{i_1}(\mathcal{V}),\a_{i_2}(\mathcal{V})$ are intertwined of depth $d-1$ (namely the set of variables $\mathcal{V}$ itself has a decomposition into disjoint sets of variables with $\a_{i_1}(\mathcal{V}),\a_{i_2}(\mathcal{V})$ - playing the role of $F$ and $\widetilde{F}$ - satisfying the foregoing conditions).

Note that fixing the variables in $\mathcal{V}$ and $\mathcal{W}$ again $F$ and $\widetilde{F}$ are linear forms in the two disjoint sets of variables $\mathcal{U}$ and $\widetilde{\mathcal{U}}$, possibly inhomogeneous. 

We will also say that $F$, $\tilde{F}$ are intertwined through the
set of polynomials $\left\{ \a_{i}\right\} _{i=1}^{k_{d}}$.

 \end{defn}

Before proceeding to give an example of intertwined polynomials, let us introduce the following 
\paragraph{Notation. $M_{i_{1},\ldots,i_{k}}^{j_{1},\ldots,j_{l}}\left(\protect\matA\right)$:}

For a matrix $\matA$, we let $M_{i_{1},\ldots,i_{k}}^{j_{1},\ldots,j_{l}}\left(\matA\right)$
denote the matrix obtained from $\matA$ by deleting the rows indexed
$i_{1},\ldots,i_{k}$ and the columns indexed $j_{1},\ldots,j_{l}$.

\begin{example}
\label{exa: det example for intertwined}If $\matA$ and $\matB$
are two $k\times k$ matrices of variables that are identical except for their
$j$-th row (resp. column), and in the $j$-th rows (resp. columns) the sets of variables that appear in $a$ and $b$ are disjoint. Then the polynomials $\det\left(\matA\right)$
and $\det\left(\matB\right)$ are intertwined of depth $k$. Indeed,
if $k=1$, then they are clearly intertwined of depth $1$; for $k>1$ we let $\mathcal{Y}$ denote the union of variables appearing in $a$ and $b$, and 
write
\[
F(\mathcal{Y})=\det\left(\matA\right)=\sum_{i=1}^{k}\left(-1\right)^{i+j}\det\left(M_{i}^{j}\left(\matA\right)\right)\cdot\matA_{i,j}
\]
and
\[
\widetilde{F}(\mathcal{Y})=\det\left(\matB\right)=\sum_{i=1}^{k}\left(-1\right)^{i+j}\det\left(M_{i}^{j}\left(\matB\right)\right)\cdot\matB_{i,j}=\sum_{i=1}^{k}\left(-1\right)^{i+j}\det\left(M_{i}^{j}\left(\matA\right)\right)\cdot\matB_{i,j}
\]
(when the matrices $a$ and $b$ differ by the $j$-th column). Then, setting $\mathcal{V}$ to be 
 the set of variables appearing in the 
matrix $M^{j}\left(\matA\right)$,
$\a_{i}\left(\mathcal{V}\right)=\left(-1\right)^{i+j}\det\left(M_{i}^{j}\left(\matA\right)\right)$,
$\b\equiv0$, $\mathcal{U}=\left(u_{i}\right)_{i=1}^{k}=\left(\matA_{i,j}\right)_{i=1}^{k}$
and $\widetilde{\mathcal{U}}=\left(\tilde{u}_{i}\right)_{i=1}^{k}=\left(\matB_{i,j}\right)_{i=1}^{k}$, we have 
:
\[
F(\mathcal{Y})=\det\left(\matA\right)=\sum_{i=1}^{k}\a_{i}\left(\mathcal{V}\right)u_{i},\:\widetilde{F}(\mathcal{Y})=\det\left(\matB\right)=\sum_{i=1}^{k}\a_{i}\left(\mathcal{V}\right)\tilde{u}_{i}
\]
where by the induction hypothesis, every pair among $\left\{ \a_{i}\left(\mathcal{V}\right)\right\} $
is intertwined of depth $k-1$, since they are determinants of two
$\left(k-1\right)\times\left(k-1\right)$ matrices that differ only
by one column (resp. row).
\end{example}
Since intertwined polynomials have integral
coefficients, they assume integer values on integral substitutions.
We are interested in the situation when the integral values obtained
by a intertwined couple have no common prime factors other than $2$.
\begin{defn}
Integers $\a_{1},\ldots,\a_{n}$ are called \emph{$2$-coprime} if
they have no common prime factors other than $2$, namely if $\gcd\left(\a_{1},\ldots,\a_{n}\right)$
is a non-negative power of $2$. In particular, if $\left\{ \a_{i}\right\} _{i=1}^{n}$
are coprime than they are $2$-coprime.
\end{defn}
 \global\long\def\linpol{f\left(y\right)}

\begin{lem}
\label{lem: create 2-coprime}Let $\a_{1},\ldots,\a_{n},\b,\ga$ be
non-zero integers, and let $s_{1},\ldots,s_{n}$ and $q_{1}\ldots,q_{n}$
be non-negative integers such that $\left\{ q_{i}\right\} _{i=1}^{n}$
are odd. Consider the following inhomogeneous linear form in the variables $y_{1},\ldots,y_{n}$:
\[
\linpol=\a_{1}y_{1}+\ldots+\a_{n}y_{n}+\b.
\]
If $\ga$ is 2-coprime to $\gcd\left(\a_{1},\ldots,\a_{n}\right)$,
then the set
\begin{equation}
\left\{ y=\left(y_{1},\ldots,y_{n}\right)\left|\begin{split} & y_{i}\mbox{ is an odd prime for all }i\\
 & y_{i}\equiv q_{i}\left(\mbox{mod }2^{s_{i}}\right)\mbox{ for all \ensuremath{i}}\\
 & \linpol\mbox{ is \ensuremath{2}-coprime to \ensuremath{\ga}}
\end{split}
\right.\right\} \subset\ZZ^{n}\label{eq:create 2-coprime}
\end{equation}
is Zariski-dense in $\F^{n}$. \end{lem}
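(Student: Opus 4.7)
The strategy is to construct the Zariski-dense subset as a Cartesian product $T_1\times\cdots\times T_n$ where each $T_i\subset\ZZ$ is an infinite set of odd primes in a suitable arithmetic progression. Once each $T_i$ is infinite (and therefore Zariski-dense in $\RR$), Example \ref{exa: for each set there is a set - dense} applied inductively gives Zariski-density of the product in $\RR^n$. The work is thus to select the arithmetic progressions so that (i) each $T_i$ is nonempty (equivalently, the residue chosen for $y_i$ is coprime to the modulus, allowing Dirichlet's theorem on primes in arithmetic progressions to apply), and (ii) for every $y$ in the product, the value $f(y)$ is $2$-coprime to $\ga$.

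\textbf{Choice of moduli.} Write $\ga=2^{c}p_{1}\cdots p_{r}\cdot\ga^{\prime\prime}$ where $p_{1},\ldots,p_{r}$ are the distinct odd primes dividing $\ga$; let $P=p_{1}\cdots p_{r}$. For each $i$, I will require $y_{i}\equiv q_{i}\pmod{2^{s_{i}}}$ (which is already forced on us and legal since $q_{i}$ is odd, hence coprime to $2^{s_{i}}$), together with an additional residue condition $y_{i}\equiv r_{i}\pmod{P}$. By CRT this is a single condition modulo $M_{i}:=2^{s_{i}}P$, and Dirichlet's theorem then provides infinitely many primes $y_{i}$ in the resulting progression provided $\gcd(r_{i},P)=1$.

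\textbf{The key combinatorial step.} For each odd prime $p_{k}\mid\ga$, I must choose the residues $r_{i}^{(k)}:=r_{i}\bmod p_{k}\in\{1,\ldots,p_{k}-1\}$ so that additionally $\sum_{i=1}^{n}\a_{i}r_{i}^{(k)}+\b\not\equiv 0\pmod{p_{k}}$; this is what ensures $p_{k}\nmid f(y)$. By the $2$-coprimality hypothesis, $p_{k}\nmid\gcd(\a_{1},\ldots,\a_{n})$, so there exists an index $j=j(k)$ with $p_{k}\nmid\a_{j}$. Picking $r_{i}^{(k)}\in\{1,\ldots,p_{k}-1\}$ arbitrarily for $i\neq j$, the remaining condition on $r_{j}^{(k)}$ forbids at most two residues modulo $p_{k}$ (namely $0$, and possibly the unique $r_{j}^{(k)}\in\ZZ/p_{k}\ZZ$ annihilating the linear form); since $p_{k}\geq 3$, at least one admissible $r_{j}^{(k)}$ remains. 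Performing this separately for each $k=1,\ldots,r$ and reassembling by CRT yields residues $r_{i}\pmod{P}$ with $\gcd(r_{i},P)=1$ and $\sum_{i}\a_{i}r_{i}+\b\not\equiv 0\pmod{p_{k}}$ for every $k$.

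\textbf{Conclusion.} With these residues fixed, define
$$T_{i}=\bigl\{\,y_{i}\text{ odd prime}\,:\,y_{i}\equiv q_{i}\ (\bmod\ 2^{s_{i}}),\ y_{i}\equiv r_{i}\ (\bmod\ P)\,\bigr\}.$$
Each $T_{i}$ is infinite by Dirichlet's theorem, hence Zariski-dense in $\RR$. For every $y\in T_{1}\times\cdots\times T_{n}$ the congruences force $y_{i}$ odd and $y_{i}\equiv q_{i}\pmod{2^{s_{i}}}$, while $f(y)\equiv\sum_{i}\a_{i}r_{i}+\b\not\equiv 0\pmod{p_{k}}$ for each odd prime $p_{k}\mid\ga$, so $\gcd(f(y),\ga)$ is a power of $2$, i.e.\ $f(y)$ is $2$-coprime to $\ga$. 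Hence $T_{1}\times\cdots\times T_{n}$ is contained in the set \eqref{eq:create 2-coprime}, and by iterated application of Example \ref{exa: for each set there is a set - dense} it is Zariski-dense in $\RR^{n}$, completing the proof. The degenerate case $r=0$ (i.e.\ $\ga=\pm 2^{c}$) needs no residue-choice argument at all, since $2$-coprimality to $\ga$ is automatic; the single remaining constraint is Dirichlet in modulus $2^{s_{i}}$. The only subtle point of the argument is the combinatorial choice in paragraph three, but it is easy once one observes that the $2$-coprimality hypothesis produces, for each odd $p_{k}\mid\ga$, an index $j(k)$ where $\a_{j(k)}$ is invertible modulo $p_{k}$.
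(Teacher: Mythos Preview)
Your proof is correct and follows essentially the same approach as the paper: for each odd prime $p\mid\ga$ choose residues $r_i\pmod p$ so that the linear form does not vanish mod $p$, combine via CRT with the prescribed residues $q_i\pmod{2^{s_i}}$, and apply Dirichlet to each coordinate. Your combinatorial step is in fact a little cleaner than the paper's: the paper first proves an auxiliary claim (Claim \ref{gcd with coprime integers}) that a B\'ezout identity $\sum t_i\a_i=\gcd(\a_1,\dots,\a_n)$ can be realized with all $t_i$ coprime to $p$, and then scales by a suitable $\nu_p$, whereas you directly use that some $\a_{j(k)}$ is a unit mod $p_k$ and count forbidden residues. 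Both arguments work; yours avoids the extra lemma.
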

\begin{rem}
Note that in the special case where $\a_{1},\ldots,\a_{n}$ are 2-coprime,
there are no restrictions on $\b$.
\end{rem}

\begin{rem}
The integer $\ga$ can be replaced by any finite number of non-zero
integers $\ga_{1},\ldots,\ga_{r}$, by taking $\ga=\lcm\left(\ga_{1},\ldots,\ga_{r}\right)$
(least common multiple); namely, if $\ga_{1},\ldots,\ga_{r}$ are
non-zero integers such that every $\ga_{j}$ is 2-coprime to $\gcd\left(\a_{1},\ldots,\a_{n}\right)$,
then the set
\[
\left\{ \left(y_{1},\ldots,y_{n}\right)\left|\begin{split} & y_{i}\mbox{ is an odd prime for all }i\\
 & y_{i}\equiv q_{i}\left(\mbox{mod }2^{s_{i}}\right)\mbox{ for all \ensuremath{i}}\\
 & \linpol\mbox{ is \ensuremath{2}-coprime to \ensuremath{\ga_{j}}\,\ for all \ensuremath{j}}
\end{split}
\right.\right\} \subset\ZZ^{n}
\]
is Zariski-dense in $\F^{n}$.
\end{rem}

The proof of Lemma \ref{lem: create 2-coprime} is postponed to the
Appendix. The concluding result of this section is the following.

\begin{thm}
\label{thm: related polynomials and prime points}Let $m\neq0$ be
an integer, and let $F,\tilde{F}$ be a pair of intertwined polynomials
of depth $d\geq1$ in the set of variables $\mathcal{Y}$. There exists a Zariski-dense
subset $\G'\subset\RR^{\left|\mathcal{Y}\right|}$ of odd prime points $y$
such that for every $y\in\G'$, any two integers in $\left\{ F\left(y\right),\tilde{F}\left(y\right),m\right\} $
are $2$-coprime.

Moreover, if $\left(q_{i},s_{i}\right)$ are non-negative integers
such that $q_{i}$ is odd, then the elements $y=\left(y_{i}\right)_{i}$
of $\G'$ can be chosen such that $y_{i}\equiv q_{i}\left(\mbox{mod }2^{s_{i}}\right)$
for every $i$.\end{thm}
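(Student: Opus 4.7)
The plan is to induct on the depth $d$, with Lemma \ref{lem: create 2-coprime} as the main analytic input and Lemma \ref{lem: for each set there is set- dense} (together with Example \ref{exa: for each set there is a set - dense}) as the bookkeeping tool for gluing fiberwise choices into a Zariski-dense set.

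For the base case $d=1$, I have $F=\sum_{i=1}^{k_0}\a_i u_i + \b(\mathcal{W})$ and $\tilde F=\sum_{i=1}^{k_0}\a_i \tilde u_i + \b(\mathcal{W})$ with $\gcd(\a_1,\ldots,\a_{k_0})$ a power of $2$. I would first pick a Zariski-dense set of odd-prime $w$-tuples obeying the prescribed congruences (Dirichlet on each coordinate, combined with Example \ref{exa: for each set there is a set - dense}). For any such $w$, put $\b_0:=\b(w)$ and apply Lemma \ref{lem: create 2-coprime} to $\sum_i \a_i u_i + \b_0$ with $\gamma=m$: since $\gcd(\a_i)$ is already a power of $2$, the hypothesis that $\gamma$ be $2$-coprime to this gcd is automatic. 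This produces a Zariski-dense set of odd-prime $u$-tuples (with the desired congruences) for which $F(y)$ is $2$-coprime to $m$. Fixing such a $u$ and setting $F_0:=F(y)$, a second application of Lemma \ref{lem: create 2-coprime} in the multiple-$\gamma$ form of its remark, with $\gamma_1=m$ and $\gamma_2=F_0$, yields a Zariski-dense set of odd-prime $\tilde u$-tuples making $\tilde F(y)$ be $2$-coprime to both $m$ and $F(y)$. Lemma \ref{lem: for each set there is set- dense} then assembles the three fiber choices into the required $\G'\subset\RR^{|\mathcal{Y}|}$.

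For the inductive step $d\geq 2$, write
\[
F=\sum_{i=1}^{k_d}\a_i(\mathcal{V})u_i+\b(\mathcal{V},\mathcal{W}),\qquad \tilde F=\sum_{i=1}^{k_d}\a_i(\mathcal{V})\tilde u_i+\b(\mathcal{V},\mathcal{W}),
\]
and pick indices $i_1,i_2$ with $\a_{i_1},\a_{i_2}$ intertwined of depth $d-1$ in the variables $\mathcal{V}$. Applying the induction hypothesis to the pair $(\a_{i_1},\a_{i_2})$ (with any nonzero value of $m$, say $m=1$, since the auxiliary $m$ in the inductive call is irrelevant here) furnishes a Zariski-dense set of odd-prime $v$-tuples in $\RR^{|\mathcal{V}|}$ satisfying the $\mathcal{V}$-congruences and making $\a_{i_1}(v),\a_{i_2}(v)$ be $2$-coprime; consequently $\gcd(\a_1(v),\ldots,\a_{k_d}(v))$ divides $\gcd(\a_{i_1}(v),\a_{i_2}(v))$ and is therefore a power of $2$. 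For each such $v$, the polynomials $F,\tilde F$ viewed in the remaining variables $\mathcal{U}\cup\tilde{\mathcal{U}}\cup\mathcal{W}$ (with integer coefficients $\a_i(v)$ and constant term $\b(v,\mathcal{W})$) fit the depth-$1$ template, so the base case provides a fiberwise Zariski-dense set, and one final application of Lemma \ref{lem: for each set there is set- dense} glues everything into the total $\G'$.

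The main obstacle is tracking three simultaneous conditions through the induction — odd-primality, the $q_i$--$s_i$ congruences, and pairwise $2$-coprimality against the growing collection of specified integers $\{m,F(y)\}$ — and ensuring all can be imposed at once on a Zariski-dense set. Lemma \ref{lem: create 2-coprime} is designed exactly to package these conditions together, and the intertwined structure is engineered so that at each depth a distinguished pair $(\a_{i_1},\a_{i_2})$ of coefficients becomes $2$-coprime, which is precisely what collapses the overall gcd to a power of $2$ and enables Vaughan's criterion at the next level.
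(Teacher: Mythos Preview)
Your proof is correct and follows essentially the same route as the paper: induction on depth $d$, with Lemma \ref{lem: create 2-coprime} supplying the odd-prime tuples at each linear layer and Lemma \ref{lem: for each set there is set- dense} (Example \ref{exa: for each set there is a set - dense}) gluing the fiberwise choices. Your observation that the auxiliary $m$ in the inductive call is irrelevant (you may take $m=1$) is a minor simplification; the paper carries the same $m$ through but in fact only uses the pairwise $2$-coprimality of $\a_{i_1}(v),\a_{i_2}(v)$ afterward.
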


\begin{proof}
By induction on the depth $d$. If $d=1$, then $F,\tilde{F}$ are
of the form \ref{eq: related of depth 1}:
\[
F\left(y\right)=\sum_{i=1}^{k_{0}}\a_{i}u_{i}+\b\left(w\right),\,\tilde{F}\left(y\right)=\sum_{i=1}^{k_{0}}\a_{i}\tilde{u}_{i}+\b\left(w\right),
\]
where $\gcd\left\{ \a_{i}\right\} _{i=1}^{k_{0}}$ is a power of $2$,
and in particular $2$-coprime to $m$. Hence, by Lemma \ref{lem: create 2-coprime},
for any integral $w$ there exists a subset
\[
{\cal A}_{w}=\left\{ u=\left(u_{1},\ldots,u_{k_{0}}\right)\left|
\begin{split} & \left\{ u_{i}\right\} \mbox{ are odd primes and $u_{i}\equiv q_{i}\left(\mbox{mod }2^{s_{i}}\right)$
for every $i$}\\
 & F\left(y\right)=F\left(u,w\right)\mbox{ is \ensuremath{2}-coprime to \ensuremath{m}}
\end{split}
\right.\right\} \subset\ZZ^{k_{0}}
\]
which is Zariski-dense in $\RR^{k_{0}}$. Also by this Lemma, for
every integral $w$ and $u\in{\cal A}_{w}$ there exists a subset
\[
{\cal A}_{u,w}=\left\{ \tilde{u}=\left(\tilde{u}_{1},\ldots,\tilde{u}_{k_{0}}\right)\left|\begin{split} & \left\{ \tilde{u}_{i}\right\} \mbox{ are odd primes and $\tilde{u}_{i}\equiv q_{i}\left(\mbox{mod }2^{s_{i}}\right)$ for every $i$}\\
 & \tilde{F}\left(y\right)=F\left(\tilde{u},w\right)\mbox{ is \ensuremath{2}-coprime to both}\\
 & \mbox{\ensuremath{ m }\,\,and \ensuremath{F\left(y\right)=F\left(u,w\right)}}
\end{split}
\right.\right\} \subset\ZZ^{k_{0}}
\]
which is also Zariski-dense in $\RR^{k_{0}}$. According to Lemma
\ref{lem: for each set there is set- dense} (the
case of Example \ref{exa: for each set there is a set - dense}),
the set
\[
\G'=\left\{ y=\left(u,\tilde{u},w\right)\left|\begin{split} & \left(w_{i}\right)\mbox{ are odd primes $w_{i}\equiv q_{i}\left(\mbox{mod }2^{s_{i}}\right)$
for every $i$}\\
 & u\in{\cal A}_{w}\mbox{ and \ensuremath{\tilde{u}\in{\cal A}_{u,w}}}
\end{split}
\right.\right\} \subset\ZZ^{\left|\mathcal{Y}\right|}
\]
 is Zariski-dense in $\RR^{\left|\mathcal{Y}\right|}$.

Let $d\geq 2$. Since the pair $\a_{1}\left(\mathcal{V}\right),\a_{2}\left(\mathcal{V}\right)$
is intertwined of depth $d-1$, there exists a Zariski-dense set $\G'_{\mbox{ind}}\subset\RR^{\left|\mathcal{V}\right|}$
of odd prime $v$ (satisfying any desired odd congruence conditions
modulo powers of $2$) such that for every $v\in\G'_{\mbox{ind}}$,
the integers $m,\a_{1}\left(v\right),\a_{2}\left(v\right)$ are pairwise
$2$-coprime. In particular, $\gcd\left\{ \a_{i}\left(v\right)\right\} _{i=1}^{k_{d}}$
is a power of $2$, hence $2$-coprime to any given integer, for every
$v\in\G'_{\mbox{ind}}$.

Repeat a similar argument with Lemma \ref{lem: create 2-coprime}
as with the case of $d=1$:
\begin{enumerate}
\item For every $v\in\G'_{\mbox{ind}}$ and integral $w$, there exists
a Zariski-dense subset ${\cal A}_{v,w}\subset\RR^{k_{d}}$ of odd
prime $u$ (in the desired arithmetic progressions modulo powers
of $2$) such that every $u\in{\cal A}_{v,w}$, $F\left(y\right)=F\left(u,v,w\right)$
is $2$-coprime to $m$.
\item For every $v\in\G'_{\mbox{ind}}$, integral $w$ and $u\in{\cal A}_{v,w}$,
there exists a Zariski-dense subset ${\cal A}_{u,v,w}\subset\RR^{k_{d}}$
of odd prime $\tilde{u}$ (in the desired arithmetic progressions
modulo powers of $2$) such that every $\tilde{u}\in{\cal A}_{u,v,w}$,
$\tilde{F}\left(y\right)=\tilde{F}\left(\tilde{u},v,w\right)$ is
$2$-coprime to both $m$ and $F\left(y\right)$.
\end{enumerate}
By Lemma \ref{lem: for each set there is set- dense}, the set
\[
\G'=\left\{ y=\left(u,\tilde{u},v,w\right)\left|\begin{split} & \left(v_{i}\right),\left(w_{i}\right)\mbox{ are odd primes congruent to}\\
 & \mbox{$ q_{i}\left(\mbox{mod }2^{s_{i}}\right)$
for every $i$,}\\
 & u\in{\cal A}_{v,w}\mbox{ and \ensuremath{\tilde{u}\in{\cal A}_{u,v,w}}}
\end{split}
\right.\right\} \subset\ZZ^{\left|\mathcal{Y}\right|}
\]
is Zariski-dense in $\RR^{\left|\mathcal{Y}\right|}$.

\end{proof}

\section{Congruence conditions  on the coefficients:\label{sec: Powers of 2}
Main Theorem }

In Section \ref{sec: Introduction and examples} we have presented
several examples of varieties $\vgen_{m}$ to which our method will be shown to apply,
and therefore prime points are Zariski-dense in $\vgen_{m}$ if and
only if $m$ is such that there exists an odd point in $\vgen_{m}$.
Observe that in all of these examples --- the varieties $\vdet_{m}$,
$\vquad_{m}$, $\vpf_{m}$, $\vrec_{m}$ and $\vper_{m}$ --- the
necessary and sufficient condition on $m$ is a congruence condition
modulo a power of $2$. This is not a coincidence: substituting an
odd point $x$ imposes such conditions on $m=\inpol\left(x\right)$,
as well as on the polynomials $F_{i}\left(y\right)$. For example,
consider the case of the variety $\vdet_{m}$ defined by $\det\left(x\right)=m$.
The determinant of an odd $x\in\mbox{Mat}_{n}$ is divisible by $2^{n-1}$,
since we can add the first row to the $n-1$
remaining rows and obtain $n-1$ even rows, without changing the determinant.
Note that this parity condition is the strongest that is shared by
all the odd $n\times n$ matrices, since modulo $2^{n}$, $\det\left(x\right)$
can be congruent to either $0$ or $2^{n-1}$ (depending on $x$).
\global\long\def\maxexp{\varepsilon}

Let us now return to the notation set in equation (\ref{eq: def of pol Delta}). Recall that $x=\left(\xi,y,z\right)$ and
\[
\inpol\left(x\right)=F_{1}\left(y\right)\xi_{1}+\cdots+F_{n}\left(y\right)\xi_{n}+G\left(z\right)
\]
 where $F_{i}\left(y\right)$ are non-zero polynomials 
and $G\left(z\right)$ are polynomials with integer coefficients.

\paragraph{Notation. }

For an integral $y=\left\{ y_{1},\ldots,y_{N}\right\} $, let $\maxexp\left(y\right)$
be the maximal positive integer such that $2^{\maxexp\left(y\right)-1}\mid F_{i}\left(y\right)$
for every $i=1,\ldots,n$.

\begin{mythm}\label{thm: General Thm}Let
$x$ and $\inpol\left(x\right)$ as above, and let $m\in\ZZ$. Assume
that $n\geq3$, that $G(z)$ is not identically equal to $ m$, and that there exist two polynomials
in $\left\{ F_{1},\ldots,F_{n}\right\} $ that are intertwined. Then,
the prime points are Zariski-dense in $\vgen_{m}=\left\{ x:\inpol\left(x\right)=m\right\} $
if and only if there exists an odd point $\odd x=\left(\odd{\xi},\odd y,\odd z\right)$
in $\ZZ^{n+N+k}$ such that  $m\equiv\inpol\left(\odd x\right)\left(\mbox{mod }2^{\maxexp\left(\odd y\right)}\right)$.

\end{mythm}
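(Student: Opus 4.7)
Necessity is immediate: any prime point $x=(\xi,y,z)\in\vgen_m$ has only odd entries (primes $\neq 1$), so $x$ is odd, and the equality $\inpol(x)=m$ trivially gives $m\equiv\inpol(x)\pmod{2^{\maxexp(y)}}$. The substance lies in the sufficiency direction, and the plan is to verify the hypotheses of Theorem \ref{thm: Prime solutions variety}. Set $\eta:=\maxexp(\odd y)$, and, after relabelling, let $F_1,F_2$ be the intertwined pair. The goal is to produce a Zariski-dense $\G\subset\RR^{N+k}$ of odd prime vectors $(y,z)$ satisfying: (i) $y_i\equiv \odd y_i\pmod{2^\eta}$ for all $i$ and $z_j\equiv\odd z_j\pmod{2^\eta}$ for all $j$; (ii) $F_1(y),F_2(y)$ and $\gamma_z:=m-G(z)$ are pairwise $2$-coprime.

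The construction of $\G$ goes through Lemma \ref{lem: for each set there is set- dense}. First, using Dirichlet's theorem on primes in arithmetic progressions (each $\odd z_j$ is odd, hence coprime to $2^\eta$) together with Example \ref{exa: for each set there is a set - dense}, I choose a Zariski-dense set of odd prime $z\in \ZZ^k$ with $z_j\equiv \odd z_j\pmod{2^\eta}$ and $G(z)\neq m$. For each such $z$ the integer $\gamma_z$ is nonzero, and combining $z\equiv \odd z\pmod{2^\eta}$ with $m\equiv \inpol(\odd x)\pmod{2^\eta}$ yields $\gamma_z\equiv \sum_i F_i(\odd y)\odd{\xi_i}\equiv \sum_i F_i(\odd y)\pmod{2^\eta}$, where the second congruence uses the oddness of each $\odd{\xi_i}$ and $2^{\eta-1}\mid F_i(\odd y)$. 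Next, for each such $z$ I apply Theorem \ref{thm: related polynomials and prime points} to the intertwined pair $F_1,F_2$, with the integer $\gamma_z$ in place of the $m$ appearing in that statement; this substitution is afforded by the ``multiple $\gamma$'s'' remark following Lemma \ref{lem: create 2-coprime}. This yields a Zariski-dense set of odd prime $y$ satisfying (i) and (ii), and Lemma \ref{lem: for each set there is set- dense} then combines everything into the desired $\G$.

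To verify the hypotheses of Theorem \ref{thm: Prime solutions variety} on $\G$: condition (i) gives $F_i(y)\equiv F_i(\odd y)\pmod{2^\eta}$, so every $F_i(y)$ is divisible by $2^{\eta-1}$ and at least one (by maximality of $\eta$) has $2$-adic valuation exactly $\eta-1$; the congruence for $\gamma_z$ above likewise gives $2^{\eta-1}\mid \gamma_z$. Condition (ii) forces every subset of $\{F_1(y),\ldots,F_n(y),\gamma_z\}$ that retains one of the pairs $\{F_1,F_2\}$, $\{F_1,\gamma_z\}$, $\{F_2,\gamma_z\}$ to have gcd a power of $2$; this covers each of the $n$ sets obtained by removing a single $F_j(y)$. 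Divisibility by $2^{\eta-1}$, together with the presence of some element of exact $2$-adic valuation $\eta-1$ in the surviving set, then pins the gcd down to $2^{\eta-1}$. Finally, the sum condition reduces to $\sum_i F_i(y)+G(z)-m\equiv \sum_i F_i(\odd y)(1-\odd{\xi_i})\pmod{2^\eta}$, and every summand is divisible by $2^\eta$ since $1-\odd{\xi_i}$ is even and $2^{\eta-1}\mid F_i(\odd y)$.

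The principal obstacle is ensuring that the ``surviving set contains an integer of $2$-adic valuation exactly $\eta-1$'' property holds after removing any $F_j$, including one whose own $2$-adic valuation is exactly $\eta-1$. The resolution rests on the identity $\gamma_z\equiv 2^{\eta-1}N^\ast\pmod{2^\eta}$, with $N^\ast:=\bigl|\{i:v_2(F_i(\odd y))=\eta-1\}\bigr|\geq 1$: when $N^\ast$ is odd the integer $\gamma_z$ itself carries $2$-adic valuation exactly $\eta-1$ and rescues the case in which the unique such $F_i$ happens to be the one removed, while when $N^\ast$ is even there are already at least two $F_i$'s of valuation $\eta-1$, so removing any one still leaves another behind. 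Once this bookkeeping is settled, Theorem \ref{thm: Prime solutions variety} delivers the sufficiency.
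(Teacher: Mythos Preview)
Your sufficiency argument is correct and matches the paper's approach: you build the Zariski-dense set $\G$ by first choosing odd prime $z\equiv\odd z\pmod{2^\eta}$, then invoking Theorem~\ref{thm: related polynomials and prime points} (with $\gamma_z$ in place of $m$) to produce odd prime $y\equiv\odd y\pmod{2^\eta}$ making $F_1(y),F_2(y),\gamma_z$ pairwise $2$-coprime, and your $N^\ast$-parity bookkeeping to guarantee an element of exact $2$-adic valuation $\eta-1$ in every surviving $n$-subset is exactly the paper's observation (equation~\ref{eq: sum is even} and the sentence following it). The verification of the two hypotheses of Theorem~\ref{thm: Prime solutions variety} is clean.

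The necessity direction, however, has a genuine gap. You write that ``any prime point $x\in\vgen_m$ has only odd entries (primes $\neq 1$), so $x$ is odd'', but this is false: the integer $2$ is a prime, and the paper's convention explicitly allows it as an entry of a prime point. So a given prime point in $\vgen_m$ need not be odd, and you cannot directly read off the congruence $m\equiv\inpol(x)\pmod{2^{\maxexp(y)}}$ from an arbitrary prime point. What is actually needed is the existence of \emph{some} odd point in $\vgen_m$, and this requires an argument: if every prime point had at least one entry equal to $2$, the prime points would lie in the proper closed set $Z\bigl(\prod_i(x_i-2)\bigr)$, contradicting their Zariski-density in $\vgen_m$ --- unless $\vgen_m$ itself were contained in that zero set. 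Ruling out this last possibility uses the irreducibility of $\inpol-m$ (Lemma~\ref{lem: Delta is irreducible}): containment would force $\inpol-m$ to divide $\prod_i(x_i-2)$, hence to equal one of the linear factors $x_i-2$, which is absurd. Once you have an odd prime point in $\vgen_m$, your one-line congruence argument goes through.
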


\begin{proof}
The condition on $m$ is necessary; if prime points are Zariski-dense
in $\vgen_{m}$, then there exists an odd point $\odd x$ in $\vgen_{m}$.
Otherwise, every prime point $x=\left(x_{i}\right)_{i=1}^{n+N+k}\in\vgen_{m}$
has $2$ as one of its entries. In particular, the prime points in
$\vgen_{m}$ are contained in $Z\left(\prod_{i=1}^{n+N+k}\left(x_{i}-2\right)\right)$,
and are therefore not Zariski-dense in $\vgen_{m}$, unless
\[
\vgen_{m}=Z\left(\Delta-m\right)\subseteq Z\left(\prod_{i=1}^{N+n+k}\left(x_{i}-2\right)\right).
\]
However, the latter is impossible, since $\Delta-m$ is irreducible
and therefore the above implies that
\[
\left(\Delta-m\right)\mid\prod_{i=1}^{n+N+k}\left(x_{i}-2\right),
\]
i.e. $\Delta-m=x_{i}-2$ for some $i=1,\ldots,n+N+k$, a
contradiction.  We conclude that there exists an odd point $\odd x$
in $\vgen_{m}$, and in particular $m\equiv\inpol\left(\odd x\right)\left(\mbox{mod }2^{\maxexp\left(\odd y\right)}\right)$.

For sufficiency, we apply Theorem \ref{thm: Prime solutions variety}.
Assume that there exists an odd $\odd x$ in $\ZZ^{n+N+k}$
such that $m\equiv\inpol\left(\odd x\right)\left(\mbox{mod }2^{\maxexp\left(\odd y\right)}\right)$,
and let $\maxexp=\maxexp\left(\odd y\right)$. We show that  there
exists a Zariski-dense set of odd prime $\left(y,z\right)$ such that
\begin{enumerate}
\item $\sum_{i=1}^{n}F_{i}\left(y\right)+G\left(z\right)\equiv m\left(\mbox{mod }2^{\maxexp}\right)$,

\item the $\gcd$ of every $n$-sized subset of $\left\{ G\left(z\right)-m,F_{1}\left(y\right),\ldots,F_{n}\left(y\right)\right\} $
equals $2^{\maxexp-1}$.
\end{enumerate}

Hence the conditions of Theorem \ref{thm: Prime solutions variety}
are satisfied, and prime points are Zariski-dense in $\vgen_{m}$.
We begin by showing that the odd point $\odd x=\left(\odd{\xi},\odd y,\odd z\right)$
(which is not necessarily in the variety $\vgen_{m}$!) satisfies
the first condition, and the following weakened form of the second
condition:
\begin{itemize}
\item [$\odd{2}.$]the $\gcd$ of every $n$-sized subset of $\left\{ G\left(\odd z\right)-m,F_{1}\left(\odd y\right),\ldots,F_{n}\left(\odd y\right)\right\} $
is congruent to $2^{\maxexp-1}\left(\mbox{mod }2^{\maxexp}\right)$.
\end{itemize}
By assumption,
\[
m\equiv\sum_{i=1}^{n}F_{i}\left(\odd y\right)\odd{\xi_{i}}+G\left(\odd z\right)\left(\mbox{mod }2^{\maxexp}\right),
\]
where $2^{\maxexp-1}$ divides every $F_{i}\left(\odd y\right)$,
and therefore divides $m-G\left(\odd z\right)$ as well. Hence,
\[
\frac{m-G\left(\odd z\right)}{2^{\maxexp-1}}\equiv\sum_{i=1}^{n}\frac{F_{i}\left(\odd y\right)}{2^{\maxexp-1}}\cdot\odd{\xi_{i}}\left(\mbox{mod }2\right).
\]
Since $\left\{ \odd{\xi_{i}}\right\} $ are odd, then in particular
\[
\frac{m-G\left(\odd z\right)}{2^{\maxexp-1}}\equiv\sum_{i=1}^{n}\frac{F_{i}\left(\odd y\right)}{2^{\maxexp-1}}\left(\mbox{mod }2\right).
\]
Multiply by $2^{\maxexp-1}$ to obtain
\begin{equation}
m-G\left(\odd z\right)\equiv\sum_{i=1}^{n}F_{i}\left(\odd y\right)\left(\mbox{mod }2^{\maxexp}\right),\label{eq: m equivalent mod 2^a to sum of F_i}
\end{equation}
which means that $\odd x=\left(\odd{\xi},\odd y,\odd z\right)$ indeed
satisfies the first condition.

For condition $\odd 2$, observe that
equation \ref{eq: m equivalent mod 2^a to sum of F_i} implies
\begin{equation}
G\left(\odd z\right)-m+\sum_{i=1}^{n}F_{i}\left(\odd y\right)\equiv0\left(\mbox{mod }2^{\maxexp}\right).\label{eq: sum is even}
\end{equation}
By the choice of $\maxexp$, there exists some $i_{1}\in\left\{ 1,\ldots,n\right\} $
such that $F_{i_{1}}\left(\odd y\right)\equiv2^{\maxexp-1}\left(\mbox{mod }2^{\maxexp}\right)$.
Hence equation \ref{eq: sum is even} implies that there exists another
summand among
\[
\left\{ G\left(\odd z\right)-m,F_{1}\left(\odd y\right),\ldots,F_{n}\left(\odd y\right)\right\}
\]
which is also congruent to $2^{\maxexp-1}\left(\mbox{mod }2^{\maxexp}\right)$.
Thus, the $\gcd$ of every $n$-sized subset of $\left\{ G\left(\odd z\right)-m,F_{1}\left(\odd y\right),\ldots,F_{n}\left(\odd y\right)\right\} $
is congruent to $2^{\maxexp-1}\left(\mbox{mod }2^{\maxexp}\right)$.

Assume that $\left\{ F,\tilde{F}\right\} \subset\left\{ F_{i}\right\} _{i=1}^{n}$
are intertwined, and let $\G'$ be a Zariski-dense set of odd prime
$y$ such that $y\equiv\odd y\left(\mbox{mod }2^{\maxexp}\right)$
and such that every two integers in $\left\{ m-G\left(\odd z\right),F\left(y\right),\tilde{F}\left(y\right)\right\} $
are $2$-coprime (the existence of $\G'$ was establish in Theorem
\ref{thm: related polynomials and prime points}). 

Finally, note that $y\equiv\odd y\left(\mbox{mod }2^{\maxexp}\right)$ implies that 
$P(y)\equiv P(\odd y)\left(\mbox{mod }2^{\maxexp}\right)$ for any polynomial $P$.  There is a Zariski dense subset of odd prime $z$ satisfying the congruence condition $z\equiv\odd z\left(\mbox{mod }2^{\maxexp}\right)$, and they also satisfy $m-P(z)\equiv m-P(\odd z)\left(\mbox{mod }2^{\maxexp}\right)$ (for any $m$). 
Therefore the properties established above for $\odd y, \odd z$ imply that 
every $y\in\G'$ and odd prime $z\equiv\odd z\left(\mbox{mod }2^{\maxexp}\right)$  satisfy conditions 1 and 2 above.
\end{proof}
In all the examples we consider (Theorems \ref{thm: determinant variety},
\ref{thm: quadratic form variety}, \ref{thm: Pfaffian variety},
\ref{thm: Rectangular matrices variety} and \ref{thm: permanent variety}),
the polynomials $\inpol\left(x\right)$ and $\left\{ F_{j}\left(y\right)\right\} _{j=1}^{n}$
have additional symmetry, yielding a situation where the congruence conditions
on $m$ required in Theorem \ref{thm: General Thm} for \emph{some}
odd point $\odd x=\left(\odd{\xi},\odd y,\odd z\right)$ in $\ZZ^{n+N+k}$,
are actually satisfied by \emph{all} the odd points in $\ZZ^{n+N+k}$.
Thus, in the proofs of the above mentioned theorems, we shall use
the following special case of Theorem \ref{thm: General Thm}.

\begin{mythm}\label{thm: Weak General Thm}

In the setting of Theorem \ref{thm: General Thm}, let $\maxexp$
be the maximal positive integer such that $2^{\maxexp-1}\mid F_{i}\left(y\right)$
for every $i=1,\ldots,n$ and every odd $y=\left\{ y_{1},\ldots,y_{N}\right\} $
 and assume that for any odd $\odd x,\odd{\odd{x}}$ it holds that
\[
\inpol\left(\odd x\right)\equiv\inpol\left(
\odd{\odd x}\right)\left(\mbox{mod }2^{\maxexp}\right).
\]
 Then prime points are Zariski-dense in $\vgen_{m}$ if and only
if  $m\equiv\inpol\left(x\right)\left(\mbox{mod }2^{\maxexp}\right)$
for some (and actually, every) odd $x$.

\end{mythm}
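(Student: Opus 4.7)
The plan is to reduce Theorem \ref{thm: Weak General Thm} to Theorem \ref{thm: General Thm}. The subtlety is that Theorem \ref{thm: General Thm} requires an odd point $\odd{x}=(\odd{\xi},\odd{y},\odd{z})$ witnessing $m\equiv \inpol(\odd{x}) \pmod{2^{\maxexp(\odd{y})}}$, where the exponent depends on the particular $\odd{y}$; the present statement only supplies the uniform congruence $m\equiv \inpol(x) \pmod{2^{\maxexp}}$, and $\maxexp \le \maxexp(\odd{y})$ in general. The crux will be to extract a specific odd $\odd{y}$ that saturates the uniform bound, i.e.\ with $\maxexp(\odd{y})=\maxexp$.

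For the necessary direction I would argue exactly as at the opening of the proof of Theorem \ref{thm: General Thm}: irreducibility of $\inpol-m$ (Lemma \ref{lem: Delta is irreducible}) together with Zariski-density of prime points in $\vgen_m$ forces the existence of an odd integral point $\odd{x}\in\vgen_m$, so $m=\inpol(\odd{x})$; the symmetry hypothesis then propagates this to the congruence $m\equiv \inpol(x) \pmod{2^{\maxexp}}$ for every odd $x$.

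For the sufficient direction, I would first observe that the maximality of $\maxexp$ guarantees an odd $\odd{y}$ with $\maxexp(\odd{y})=\maxexp$: otherwise $2^{\maxexp}$ would divide every $F_i(y)$ for every odd $y$, violating the definition of $\maxexp$ as the largest exponent of uniform divisibility. Fixing such a $\odd{y}$ and any odd $\odd{\xi},\odd{z}$, the point $\odd{x}=(\odd{\xi},\odd{y},\odd{z})$ is odd, so the symmetry hypothesis yields $\inpol(\odd{x})\equiv \inpol(x) \equiv m \pmod{2^{\maxexp}}$, which, since $\maxexp(\odd{y})=\maxexp$, is precisely the hypothesis of Theorem \ref{thm: General Thm}. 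Applying that theorem completes the sufficient direction.

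The one non-routine ingredient is the existence of an odd $\odd{y}$ saturating $\maxexp$, and this is a one-line consequence of the definition of a maximum; I expect no serious obstacle. In fact, in each of the concrete examples considered in the paper, one has the stronger property that $\maxexp(y)=\maxexp$ for every odd $y$, which makes the reduction essentially transparent.
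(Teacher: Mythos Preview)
Your proposal is correct and follows essentially the same approach as the paper: both directions reduce to Theorem \ref{thm: General Thm}, with the key observation in the sufficient direction being that the maximality of $\maxexp$ furnishes an odd $\odd{y}$ with $\maxexp(\odd{y})=\maxexp$. One minor inaccuracy in your closing remark: it is \emph{not} true in all the concrete examples that $\maxexp(y)=\maxexp$ for every odd $y$ (see e.g.\ Lemma \ref{lem: parity conditinos on rectangular coefficients}, where an odd $y^0$ with $B_i(y^0)\equiv 0\pmod{2^{2n-1}}$ is exhibited), but this does not affect your argument.
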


\begin{proof}
As in the proof of Theorem \ref{thm: General Thm}: if prime points
are Zariski-dense in $\vgen_{m}$, then there exists an odd point
$\odd x$ in $\vgen_{m}$, and in particular $m\equiv\inpol\left(\odd x\right)\left(\mbox{mod }2^{\maxexp\left(\odd y\right)}\right)$.
Since $\maxexp\leq\maxexp\left(\odd y\right)$, $m\equiv\inpol\left(\odd x\right)\left(\mbox{mod }2^{\maxexp}\right)$.

Conversely, assume $m\equiv\inpol\left(x\right)\left(\mbox{mod }2^{\maxexp}\right)$
for every odd $x$. Let $\odd y\in\ZZ^{N}$ be odd such
that $2^{\maxexp}\nmid F_{i}\left(\odd y\right)$ for some $i\in\left\{ 1,\ldots,n\right\} $;
then $\maxexp\left(\odd y\right)=\maxexp$. By assumption, for any
odd $\odd{\xi}$ and $\odd z$, $m\equiv\inpol\left(\odd x\right)\left(\mbox{mod }2^{\maxexp}\right)$
where $\odd x=\left(\odd{\xi},\odd y,\odd z\right)$. Then  the conditions
of Theorem \ref{thm: General Thm} are met, and prime points are Zariski-dense
in $\vgen_{m}$. \end{proof}
\begin{example}
We note that Theorem \ref{thm: determinant variety}
for Zariski-density of prime points in $\vdet_{m}=\left\{ x\in\mbox{Mat}_{n}:\det\left(x\right)=m\right\} $
is a consequence of Theorem \ref{thm: Weak General Thm}. For $x\in\mbox{Mat}_{n}\left(\ZZ\right)$
denote the first row of $x$ by $\left(\xi_{1},\ldots,\xi_{n}\right)$,
and the matrix obtained by removing the first row of $x$ by $y\in\mbox{Mat}_{\left(n-1\right)\times n}\left(\ZZ\right)$.
An expansion of $\det\left(x\right)$ along the first row yields
\[
D_{1}\left(y\right)\xi_{1}+\cdots+D_{n}\left(y\right)\xi_{n}=m,
\]
where $\left\{ D_{i}\left(y\right)\right\} _{i=1}^{n}$ are polynomials
in the entries of $y$, and more specifically, determinants of $\left(n-1\right)\times\left(n-1\right)$
submatrices of $y$ (with alternating signs). By Example \ref{exa: det example for intertwined},
any pair of coefficients $D_{i_{1}}\left(y\right),D_{i_{2}}\left(y\right)$
is intertwined. As explained in the beginning of this section, for
every odd $\odd x$ and $i\in\left\{ 1,\ldots,n\right\} $ it holds
that $\det\left(\odd x\right)\equiv0\left(\mbox{mod }2^{n-1}\right)$
and $D_{i}\left(\odd y\right)\equiv0\left(\mbox{mod }2^{n-2}\right)$,
and it can be shown that these powers are the maximal that hold for
every odd point; then $\maxexp=n-1$ and by Theorem \ref{thm: Weak General Thm},
prime points are Zariski-dense in $\vdet_{m}$
if and only if $m\equiv0\left(\mbox{mod }2^{n-1}\right)$.
\end{example}

\begin{example}
Theorem \ref{thm: quadratic form variety} for Zariski density of
prime points in $\vquad_{m}$, defined as a level set of the quadratic
form $Q{}_{n,k}\left(\xi,y,z\right)=\sum_{i=1}^{n}\xi_{i}y_{i}+\sum_{i=1}^{k}z_{i}^{2}$,
is also a consequence of Theorem \ref{thm: Weak General Thm}. With
$F_{i}\left(y\right)=y_{i}$ for all $i=i,\ldots,n$ and $G\left(z\right)=\sum_{i=1}^{k}z_{i}^{2}$
(as in the proof of Theorem \ref{thm: quadratic form variety}, Section
\ref{sec: Quadratic forms}), it is clearly the case that $\maxexp=1$
 and $\inpol\left(x\right)=Q{}_{n,k}\left(\xi,y,z\right)\equiv m\equiv0\left(\mbox{mod }2^{1}\right)$
if and only if $n+k\equiv0\left(\mbox{mod }2\right)$.
\end{example}
We now proceed to prove Theorems \ref{thm: Pfaffian variety}, \ref{thm: Rectangular matrices variety}
and \ref{thm: permanent variety} by verifying that the varieties
in question satisfy the conditions of Theorem \ref{thm: Weak General Thm}.

\section{Variety of anti-symmetric matrices of fixed Pfaffian\label{sec: Anti -symmetric odd Pfaffian}}

Denote by $\asym_{2n}\left(\RR\right)$ the space of anti-symmetric
matrices of order $2n\times2n$ over $\RR$, and recall
$M_{i_{1},\ldots,i_{k}}^{j_{1},\ldots,j_{l}}\left(\matA\right)$ denotes
the matrix obtained from a matrix $\matA$ by deleting the rows indexed
$i_{1},\ldots,i_{k}$ and the columns indexed $j_{1},\ldots,j_{l}$.
The \textit{Pfaffian} of a matrix in $\asym_{2n}\left(\F\right)$
is a polynomial of degree $n$ in the matrix entries that can be defined
recursively. By convention, the Pfaffian of the $0\times0$ matrix
is defined to be $1$. For $n\geq1$ let $x\in\asym_{2n}\left(\mathbb{R}\right)$:
\[
x=\left[\begin{array}{cccc}
0 & x_{12} & \cdots & x_{1\,2n}\\
-x_{12}\\
\vdots &  & \ddots & \vdots\\
-x_{1\,2n} &  & \cdots & 0
\end{array}\right],
\]
and observe that $M_{i,j}^{i,j}\left(x\right)$ is a $2\left(n-1\right)\times2\left(n-1\right)$
anti-symmetric matrix; then
\begin{equation}
\pf\left(x\right)=\sum_{j=2}^{2n}\left(-1\right)^{j}x_{1j}\pf\left(M_{1,j}^{1,j}\left(x\right)\right).\label{eq: Pfaffian formula}
\end{equation}
For example,
\[
\pf\left(\left[\begin{array}{cc}
0 & a\\
-a & 0
\end{array}\right]\right)=a,
\]
\[
\pf\left(\left[\begin{array}{cccc}
0 & a & b & c\\
-a & 0 & d & e\\
-b & -d & 0 & f\\
-c & -e & -f & 0
\end{array}\right]\right)=af-be+cd.
\]

The group $\mbox{GL}_{2n}\left(\RR\right)$
acts on $\asym_{2n}\left(\mathbb{R}\right)$ by matrix congruence:
$g\cdot x=gxg^{T}$, and the non-singular matrices in $\asym_{2n}\left(\mathbb{R}\right)$
are a single orbit. To see that, observe that every non-singular $2n\times2n$
anti-symmetric matrix is congruent to $\Omega_{n}=\left[\begin{smallmatrix}0_{n} & I_{n}\\
-I_{n} & 0_{n}
\end{smallmatrix}\right]$. A theorem by Cayley  (\cite{Cayley_1847}, \cite{Muir_1911}) states
that for every $x\in\asym_{2n}\left(\mathbb{R}\right)$,
\begin{equation}
\det\left(x\right)=\left(\pf\left(x\right)\right)^{2}.\label{eq: det=00003DPf^2}
\end{equation}
It follows that for every $x\in\asym_{2n}\left(\mathbb{R}\right)$
and $g\in\mbox{GL}_{2n}\left(\mathbb{R}\right)$:
\begin{equation}
\pf\left(g^{\mbox{t}}xg\right)=\det\left(g\right)\pf\left(x\right).\label{eq: Pfaffian identity}
\end{equation}
In particular, the Pfaffian is an invariant  for the action of $\mbox{SL}_{2n}\left(\mathbb{\RR}\right)$
on $\asym_{2n}\left(\RR\right)$, and the orbits of $\mbox{SL}_{2n}\left(\mathbb{\RR}\right)$
on the non-singular matrices in $\asym_{2n}\left(\RR\right)$ are
the level sets of the Pfaffian:
\[
\vpf_{m}=\left\{ x\in\asym_{2n}\left(\RR\right)\mid\pf\left(x\right)=m\right\}
\]
with $m\neq0$.  Call a matrix in $\vpf_{m}$ prime if all its non-diagonal entries are primes ($\neq 1$) in $\mathbb{Z}$.  The goal of this section is to prove the following:

\def\thethmrpt{\ref{thm: Pfaffian variety}}
\begin{thmrpt}

For $n\geq2$, prime matrices are Zariski-dense in $\vpf_{m}\subset\asym_{2n}$
if and only if $m$ is an odd integer.

\end{thmrpt}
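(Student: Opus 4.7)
The plan is to apply Theorem \ref{thm: Weak General Thm} to the Pfaffian expansion of $x \in \asym_{2n}$ along its first row. Setting $\xi_i = x_{1,i+1}$ for $i = 1,\ldots,2n-1$ and letting $y$ denote the above-diagonal entries of the submatrix $M_1^1(x)$, formula \ref{eq: Pfaffian formula} gives
\[
\inpol(x) = \pf(x) = \sum_{i=1}^{2n-1} F_i(y)\, \xi_i, \qquad F_i(y) = (-1)^{i+1}\pf\!\bigl(M_{1,i+1}^{1,i+1}(x)\bigr),
\]
with no $G(z)$ component. The number $2n - 1$ of linear terms is at least $3$ precisely when $n \geq 2$, and $\vpf_m$ is an $\mbox{SL}_{2n}$-orbit for $m \neq 0$, hence irreducible.

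First I would establish by induction on $n$ that $\pf$ of any all-odd anti-symmetric $2n\times 2n$ integer matrix is odd: the base case is $\pf(x) = x_{12}$ for $n = 1$, and the induction step follows because the expansion above, applied to such a matrix, is a sum of $2n - 1$ odd integers. This claim will give (i) necessity of $m$ odd --- Zariski density of prime matrices, combined with the irreducibility argument from the proof of Theorem \ref{thm: General Thm}, forces an all-odd matrix into $\vpf_m$; (ii) $\maxexp = 1$ in the sense of Theorem \ref{thm: Weak General Thm}, since each $F_i(y)$ for odd $y$ is $\pm$ the Pfaffian of an all-odd $(2n-2)\times(2n-2)$ matrix; (iii) the symmetry hypothesis $\inpol(\odd x) \equiv \inpol(\odd{\odd x}) \pmod 2$, both sides being $\equiv 1 \pmod 2$.

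The main step will be verifying that two of the $F_i$ are intertwined. I would work with the pair $(\pf(M_{1,2}^{1,2}(x)),\, \pf(M_{1,3}^{1,3}(x)))$, which is the pair $(F_1, -F_2)$ up to a sign that is absorbed by the reparametrization $\tilde{\xi}_i = (-1)^{i+1}\xi_i$ (harmless since Vaughan's theorem permits primes of either sign). Expanding each Pfaffian along its own first row --- row $3$ of $x$ for the first and row $2$ for the second --- will yield the common shape
\[
\pf\!\bigl(M_{1,2}^{1,2}(x)\bigr) = \sum_{k=4}^{2n} (-1)^k\, x_{3,k}\, \pf\!\bigl(M_{1,2,3,k}^{1,2,3,k}(x)\bigr),
\]
\[
\pf\!\bigl(M_{1,3}^{1,3}(x)\bigr) = \sum_{k=4}^{2n} (-1)^k\, x_{2,k}\, \pf\!\bigl(M_{1,2,3,k}^{1,2,3,k}(x)\bigr),
\]
which shows that, with $\mathcal{U} = \{x_{3,k}\}_{k=4}^{2n}$, $\widetilde{\mathcal{U}} = \{x_{2,k}\}_{k=4}^{2n}$, $\mathcal{V}$ the above-diagonal entries with both indices in $\{4,\ldots,2n\}$, $\mathcal{W} = \{x_{2,3}\}$, and $\beta = 0$, the pair fits the intertwined form as soon as some pair among the coefficients $\alpha_k(\mathcal{V}) = (-1)^k\pf(M_{1,2,3,k}^{1,2,3,k}(x))$ is intertwined at depth one less. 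Choosing $k = 4, 5$ and relabeling $\{4,\ldots,2n\} \to \{1,\ldots,2n-2\}$ reduces this to exactly the ``$M_{1,2}^{1,2}$ versus $M_{1,3}^{1,3}$'' pattern on a $(2n-2)\times(2n-2)$ anti-symmetric matrix, so induction on $n$ with base $n = 2$ --- where the inner Pfaffians collapse to single disjoint variables, intertwined at depth $1$ with $\pm 1$ coefficients of $\gcd = 2^0$ --- closes the loop.

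The hard part will be this recursive intertwining step: the alternating signs in the Pfaffian expansion must be tracked carefully so that the ``same coefficient'' condition of the intertwined definition holds at each depth, and the reindexing at each stage must align the sub-Pfaffian pair at level $d$ with the overall pattern one level above. Once this bookkeeping is in place, all hypotheses of Theorem \ref{thm: Weak General Thm} will be satisfied, and the conclusion --- prime matrices are Zariski dense in $\vpf_m$ if and only if $m$ is odd --- will follow.
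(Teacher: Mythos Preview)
Your proposal is correct and follows essentially the same route as the paper: expand $\pf(x)$ along the first row, invoke the inductive parity lemma for the Pfaffian of an odd matrix to get $\maxexp=1$, verify that two of the coefficient Pfaffians are intertwined via the recursive structure of the Pfaffian, and apply Theorem~\ref{thm: Weak General Thm}. The paper asserts the intertwining of $P_1,P_2$ in one sentence (``the Pfaffians of two anti-symmetric matrices that differ only in their first row and column are intertwined''), whereas you actually write out the paired expansions and the induction, so your argument is more explicit on exactly the step the paper leaves to the reader; the sign bookkeeping you flag is a non-issue in both versions, since the paper too works with $P_1,P_2$ rather than $F_1,F_2$ and the reparametrisation you describe (or simply taking $F_1,F_3$ instead) resolves it.
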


If prime matrices are Zariski-dense in $\vpf_{m}$, then there exists
an odd point $x\in\vpf_{m}$, which is a matrix $x$ whose non-diagonal
entries are odd integers. The necessity of the condition on $m$ is
then a consequence of the following.
\begin{lem}
\label{lem: Pf of odd matrix is odd}The Pfaffian of an odd anti-symmetric
matrix, which is a matrix whose non-diagonal entries are odd integers,
is an odd integer.\end{lem}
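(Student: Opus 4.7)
The plan is to prove this by induction on $n$, using the recursive expansion \ref{eq: Pfaffian formula} of the Pfaffian along the first row.

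For the base case $n=1$, we simply have $\pf(x) = x_{12}$, which is one of the non-diagonal entries and is therefore odd by assumption.

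For the inductive step, suppose the claim holds for all odd anti-symmetric matrices of order $2(n-1)\times 2(n-1)$. Let $x \in \asym_{2n}(\ZZ)$ have all non-diagonal entries odd. The recursive formula gives
\[
\pf(x) = \sum_{j=2}^{2n}(-1)^{j} x_{1j}\, \pf\!\left(M_{1,j}^{1,j}(x)\right),
\]
which is a sum of exactly $2n-1$ terms. For each $j \in \{2,\dots,2n\}$, the entry $x_{1j}$ is a non-diagonal entry of $x$ and hence odd. Moreover, the matrix $M_{1,j}^{1,j}(x)$ is a $2(n-1)\times 2(n-1)$ anti-symmetric matrix whose non-diagonal entries are precisely the entries $x_{ik}$ with $i,k \notin \{1,j\}$ and $i \neq k$, all of which are non-diagonal entries of $x$ and therefore odd. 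By the induction hypothesis, $\pf\!\left(M_{1,j}^{1,j}(x)\right)$ is odd for every $j$. Thus every summand $(-1)^{j} x_{1j}\,\pf\!\left(M_{1,j}^{1,j}(x)\right)$ is an odd integer, and $\pf(x)$ is a sum of $2n-1$ odd integers. Since $2n-1$ is odd, the total sum is odd, completing the induction.

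The argument is essentially bookkeeping once one observes that the deleted principal minor $M_{1,j}^{1,j}(x)$ is itself odd anti-symmetric and that the number of terms in the expansion is odd — so there is no real obstacle here. The main point worth emphasizing is simply that the parity of the number of summands ($2n-1$) matches what is needed for the induction to close.
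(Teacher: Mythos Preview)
Your proof is correct and essentially identical to the paper's own argument: both induct on $n$ via the recursive expansion \ref{eq: Pfaffian formula}, observe that each $M_{1,j}^{1,j}(x)$ is again odd anti-symmetric so its Pfaffian is odd by induction, and conclude since the sum has an odd number $2n-1$ of odd summands.
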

\begin{proof}
Let
\[
x=\left[\begin{array}{cccc}
0 & x_{12} & \cdots & x_{1\,2n}\\
-x_{12} & 0\\
\vdots &  & \ddots & \vdots\\
-x_{1\,2n} &  & \cdots & 0
\end{array}\right]
\]
 where $n\geq1$ and $x_{ij}$ is odd for all $i\neq j$. We prove
by induction on $n$.

If $n=1$ then $x$ is of the form
\[
x=\left[\begin{array}{cc}
0 & q\\
-q & 0
\end{array}\right]
\]
where $q$ is an odd integer, and $\pf\left(x\right)=q$ .

Let $n>1$, and assume the claim holds for $n-1$. For $i\neq j$,
the matrix $M_{i,j}^{i,j}\left(x\right)$ lies in $\asym_{2\left(n-1\right)}\left(\F\right)$,
and therefore $\pf\left(M_{i,j}^{i,j}\left(x\right)\right)$ is odd.
It follows that
\[
\pf\left(x\right)=\sum_{j=2}^{2n}\left(-1\right)^{j}x_{1j}\pf\left(M_{1,j}^{1,j}\left(x\right)\right)
\]
is odd, since each summand is odd  and there is an odd number $2n-1$
of summands.
\end{proof}
We now turn to prove Theorem \ref{thm: Pfaffian variety}.
\begin{proof}[Proof of Theorem \ref{thm: Pfaffian variety}]
If $x$ is a matrix in $\asym_{2n}\left(\F\right)$, write $\left(0,x_{1,2},\ldots,x_{1,2n}\right)=\left(\xi_{2},\ldots,\xi_{2n}\right)$
for its first row (and column), and $y\in\asym_{2n-1}\left(\F\right)$
for the matrix obtained from $x$ by deleting its first row and column:
\[
x=\left[\begin{array}{cc}
0 & \begin{array}{ccc}
\xi_{2} & \cdots & \xi_{2n}\end{array}\\
\begin{array}{c}
-\xi_{2}\\
\vdots\\
-\xi_{2n}
\end{array} & \begin{array}{|ccc}
\hline \,\,\,\, &  & \,\,\,\,\\
 & y\\
\\
\end{array}
\end{array}\right].
\]
Denote by $P_{j}\left(y\right)$ the Pfaffian of the $\left(2n-2\right)\times\left(2n-2\right)$
anti-symmetric matrix obtained from $y$ by deleting its $j$-th row
and column, namely, $P_{j}\left(y\right)=\pf\left(M_{j}^{j}\left(y\right)\right)=\pf\left(M_{1,j+1}^{1,j+1}\left(x\right)\right)$.
Then,
\[
\pf\left(x\right)=\sum_{j=2}^{2n}\left(-1\right)^{j}\xi_{j}P_{j-1}\left(y\right)=P_{1}\left(y\right)\xi_{2}-P_{2}\left(y\right)\xi_{3}+...+\left(-1\right)^{2n}P_{2n-1}\left(y\right)\xi_{2n}
\]
and in particular $\pf\left(x\right)=\inpol\left(x\right)$ is of
the form \ref{eq: def of pol Delta}.

Formula \ref{eq: Pfaffian formula} for the Pfaffian and the fact
that it is defined recursively imply that the Pfaffians of two anti-symmetric
matrices that differ only in their first row and column are intertwined.
Hence $P_{1}\left(y\right),P_{2}\left(y\right)$ are intertwined.

By Lemma \ref{lem: Pf of odd matrix is odd},  when $x$ is odd,
then so are $\pf\left(x\right)$ and the $P_{i}\left(y\right)$'s;
the conditions of Theorem \ref{thm: Weak General Thm} are therefore
satisfied with $\maxexp=1$. In particular,
prime points are Zariski-dense in $\vpf_{m}$ if and only if $m\equiv\inpol\left(x\right)\left(\mbox{mod }2\right)$
for every odd $x$, namely if and only if $m\equiv1\left(\mbox{mod 2}\right)$. \end{proof}

\begin{rem}
\label{rem: hafnian}An analog for the Pfaffian of anti-symmetric
matrices of even order is defined for symmetric matrices of even order
whose main diagonal is identically zero; it is called the \emph{hafnian},
and is defined as follows. For
\[
x=\left[\begin{array}{cccc}
0 & x_{1,2} & \cdots & x_{1,2n}\\
x_{1,2}\\
\vdots &  & \ddots & \vdots\\
x_{1,2n} &  & \cdots & 0
\end{array}\right],
\]
the hafnian of $x$ is
\[
\haf\left(x\right)=\sum_{j=2}^{2n}x_{1j}\cdot\haf\left(M_{1,j}^{1,j}\left(x\right)\right),
\]
where
\[
\haf\left(\left[\begin{array}{cc}
0 & q\\
q & 0
\end{array}\right]\right)=q.
\]
In other words, the hafnian polynomial is obtained from the Pfaffian
by switching all the negative signs to positive ones. In this sense,
it is analogous to the \emph{permanent} of a square matrix (see Section
\ref{sec: non-homogenous varieties}). An identical proof to the one
of Theorem \ref{thm: Pfaffian variety} yields that prime matrices
are Zariski-dense in the variety of fixed hafnian $m\neq0$ if and
only if $m$ is odd. This variety is not invariant under a group action,
namely it is non-homogeneous.
\end{rem}

\section{Variety of rectangular matrices\label{sec: rectangular matrices}}

\subsection{Motivation from pre-homogeneous vector spaces}

In all the examples we considered so far (except for the variety $\vper_{m}$
and the hafnian variety, which are non-homogeneous), the varieties
$\vgen_{m}\left(\RR\right)$ for $0\neq m\in\RR$ are $\mbox{SL}_{n}\left(\F\right)$
orbits, and foliate an open orbit of $\mbox{GL}_{n}\left(\F\right)$:
\begin{enumerate}
\item The varieties $\vdet_{m}$ foliate the open $\mbox{GL}_{n}\left(\F\right)$-orbit
$\left\{ x\in\mbox{Mat}_{n}\left(\RR\right)\mid\det\left(x\right)\neq0\right\} $;
\item the varieties $\vpf_{m}$ foliate the open $\mbox{GL}_{2n}\left(\F\right)$-orbit
$\left\{ x\in\asym_{2n}\left(\RR\right)\mid\pf\left(x\right)\neq0\right\} $;
\item the varieties $\vquad_{m}$ foliate the open $G_{n,k}\times\mbox{GL}_{1}\left(\F\right)$-orbit
$\left\{ x\in\RR^{2n+k}\mid Q_{n,k}\left(x\right)\neq0\right\} $.
\end{enumerate}

The pairs $\left(\mbox{GL}_{n}\left(\F\right),\mbox{Mat}_{n}\left(\F\right)\right)$,
$\left(\mbox{GL}_{2n}\left(\F\right),\asym_{2n}\left(\F\right)\right)$,
and $\left(G_{n,k}\times\mbox{GL}_{1}\left(\RR\right),\RR^{2n+k}\right)$
are therefore examples of \emph{pre-homogeneous vector spaces:}

\begin{defn}[\cite{PV_book}; see also \cite{SSM_90}, \cite{Servedio_73}. ]
Let $G$ be a connected linear algebraic group over an algebraically
closed field $\KK$ and let $V$ be a finite-dimensional vector space
over $\KK$ which affords a rational representation of $G$. The pair
$\left(G,V\right)$ is called a \emph{pre-homogeneous vector space}
(or P.V., for short) if $G$ has a Zariski-open (and therefore Zariski-dense)
orbit in $V$.
\end{defn}
Sato and Kimura have classified the irreducible pre-homogeneous
vector spaces in \cite{PHVS_77}. According to this classification,
there are only five infinite families of regular irreducible P.V.s,
and the remaining P.V.s are exceptional cases. Two of these families
are $\left(\mbox{GL}_{2n}\left(\F\right),\asym_{2n}\left(\F\right)\right)$
and $\left(\mbox{GL}_{n}\left(\F\right),\mbox{Mat}_{n}\left(\F\right)\right)$;
the pair $\left(G_{n,k}\times\mbox{GL}_{1}\left(\RR\right),\RR^{2n+k}\right)$
 is a sub-family of a third family. In this section we consider a
fourth family, and provide a necessary and sufficient condition for
Zariski density of prime points in the level sets defined by the associated
invariant polynomial.

\subsection{Exposition of the example}

For $\l\geq n\geq1$, we consider the action of $\mbox{SP}_{\l}\left(\F\right)\times\mbox{GL}_{2n}\left(\F\right)$
on the space $\mbox{Mat}_{\,2\l\times2n}\left(\F\right)$ given by
\[
\left(g,h\right)\cdot x=gxh^{\mbox{t}}
\]
where $x\in\mbox{Mat}_{2\l\times2n}\left(\F\right)$, $g\in\mbox{SP}_{\l}\left(\F\right)$
and $h\in\mbox{GL}_{2n}\left(\F\right)$. Define the polynomial
\[
\pitz\left(x\right)=\pf\left(x^{\mbox{t}}\Omega_{\l}x\right),
\]
where $\Omega_{\l}$ is as defined in \ref{eq: Omega def}. The set
of matrices $x$ for which $\pitz\left(x\right)\neq0$ (equivalently,
$\det\left(x^{\mbox{t}}x\right)\neq0$) is an open orbit of $\mbox{SP}_{\l}\left(\F\right)\times\mbox{GL}_{2n}\left(\F\right)$,
and it is foliated by orbits of $\mbox{SP}_{\l}\left(\F\right)\times\mbox{SL}_{2n}\left(\F\right)$,
given by the level sets of $\pitz$:
\[
\vrec_{m}=\left\{ x\in\mbox{Mat}_{\,2\l\times2n}\left(\F\right)\mid\pitz\left(x\right)=m\right\}
\]
with $m\neq0$. Indeed, $\pitz$ is invariant under the action of
$\mbox{SP}_{\l}\left(\F\right)\times\mbox{SL}_{2n}\left(\F\right)$,
since, by \ref{eq: Pfaffian identity}:
\[
\pitz\left(\left(g,h\right)\cdot x\right)=\pitz\left(x\right)
\]
 (see \cite{PHVS_77}, \cite{Pitzian}).  A necessary and sufficient
condition for Zariski density of prime matrices in $\vrec_{m}$ is
as follows:

\def\thethmrpt{\ref{thm: Rectangular matrices variety}}
\begin{thmrpt}

For $n\geq1$, $\l\geq2$, $\l\geq n$, and $0\neq m\in\ZZ$, prime
matrices are Zariski-dense in $\vrec_{m}\subset\mbox{Mat}_{\,2\l\times2n}\left(\F\right)$
if and only if $m\equiv0\left(\mbox{mod }2^{2n-1}\right)$.

\end{thmrpt}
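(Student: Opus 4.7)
The plan is to apply Theorem \ref{thm: Weak General Thm} to the $\mbox{SP}_{\ell}\times\mbox{SL}_{2n}$-invariant polynomial $\inpol(x)=\pitz(x)=\pf(x^{\mathrm{t}}\Omega_\ell x)$. The central tool will be the Cauchy--Binet-type identity
\[
\pf(x^{\mathrm{t}}\Omega_\ell x)=\sum_{I\subset[2\ell],\,|I|=2n}\pf(\Omega_\ell[I])\,\det(x[I,:]),
\]
in which $\pf(\Omega_\ell[I])$ is nonzero precisely when $I=J\cup(J+\ell)$ for some $n$-subset $J\subset[\ell]$, in which case $\pf(\Omega_\ell[I])=\epsilon_J\in\{\pm 1\}$. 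Thus $\pitz(x)=\sum_{J}\epsilon_J\det(x[J\cup(J+\ell),:])$ is a signed sum of $2n\times 2n$ minors of $x$; this presentation drives both halves of the argument.

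Necessity is immediate. If $x\in\ZZ^{2\ell\times 2n}$ is odd then each minor $\det(x[J\cup(J+\ell),:])$ is the determinant of a $2n\times 2n$ odd integer matrix and is, by the row-subtraction argument recalled for Theorem \ref{thm: determinant variety}, divisible by $2^{2n-1}$. Therefore $\pitz(x)\equiv 0\pmod{2^{2n-1}}$, so the existence of an odd point in $\vrec_m$ forces $m\equiv 0\pmod{2^{2n-1}}$.

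For sufficiency, for $n\ge 2$ I would take $\xi_j=x_{1,j}$ ($j=1,\dots,2n\ge 4\ge 3$) and $y$ equal to the remaining rows of $x$, with $G\equiv 0$; the case $n=1$ is parallel with $\xi_i=x_{i,1}$ ($i=1,\dots,2\ell\ge 4$), where each $F_i$ is $\pm$ a single entry of the second column of $x$ and the intertwined property is trivial. Expanding each $2n\times 2n$ minor in the Cauchy--Binet formula along its first row yields
\[
F_j(y)=(-1)^{1+j}\sum_{J\ni 1}\epsilon_J\,\det\bigl(x[J^{\ast},[2n]\setminus\{j\}]\bigr),\quad J^{\ast}=(J\setminus\{1\})\cup(J+\ell),
\]
as a signed sum of $(2n-1)\times(2n-1)$ minors of $y$. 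Each such minor is divisible by $2^{2n-2}$ on odd $y$, so $2^{2n-2}\mid F_j(y)$ for every $j$ and every odd $y$. This identifies $\varepsilon=2n-1$ in the notation of Theorem \ref{thm: Weak General Thm}, and sharpness is verified by specialising to $\ell=n$, where the sum collapses to a single minor and one recovers the sharp bound from Theorem \ref{thm: determinant variety}. Combined with $\pitz(\mbox{odd }x)\equiv 0\pmod{2^{2n-1}}$ from the necessity step, the uniformity hypothesis of Theorem \ref{thm: Weak General Thm} is satisfied.

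The main obstacle is exhibiting an intertwined pair $(F_j,F_{j'})$ for $n\ge 2$. Fix $j\ne j'$ and partition the $y$-variables as $\mathcal{U}=\{x_{i,j'}\}_{i\ne 1}$, $\tilde{\mathcal{U}}=\{x_{i,j}\}_{i\ne 1}$, $\mathcal{V}=\{x_{i,k}:i\ne 1,\,k\notin\{j,j'\}\}$. For each $J\ni 1$, the two $(2n-1)\times(2n-1)$ minors inside $F_j$ and $F_{j'}$ share the row set $J^{\ast}$ and differ in exactly one column; column-expanding each along that differing column produces the same family of $(2n-2)\times(2n-2)$ sub-cofactors, supported in $\mathbb{Z}[\mathcal{V}]$. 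Consequently $F_j=\sum_i\alpha_i(\mathcal{V})\,x_{i,j'}$ and $F_{j'}=\pm\sum_i\alpha_i(\mathcal{V})\,x_{i,j}$, with a common coefficient family $\alpha_i(\mathcal{V})$ up to a uniform sign arising from the relative positions of $j,j'$ in $[2n]\setminus\{j'\}$ and $[2n]\setminus\{j\}$. Iterating Example \ref{exa: det example for intertwined} on the constituent $(2n-2)\times(2n-2)$ minors (which pairwise differ by a single row) verifies that $(F_j,F_{j'})$ satisfies the intertwining hypothesis of Theorem \ref{thm: related polynomials and prime points} of depth $2n-1$; the overall sign on $F_{j'}$ is harmless because $2$-coprimality to both $m$ and $F_j$ is sign-insensitive. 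Theorem \ref{thm: related polynomials and prime points} then produces the Zariski-dense set of odd prime $y$ required, and Theorem \ref{thm: Weak General Thm} concludes that prime matrices are Zariski-dense in $\vrec_m$ if and only if $m\equiv 0\pmod{2^{2n-1}}$.
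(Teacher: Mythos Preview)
Your approach has a genuine gap when $\ell>n$ and $n\ge 2$. You expand $\pitz(x)$ along the first \emph{row} and claim $G\equiv 0$, but in your Cauchy--Binet identity only those $J\subset[\ell]$ with $1\in J$ produce minors that involve row~$1$. When $\ell>n$ there exist $n$-subsets $J\subset[\ell]$ with $1\notin J$, and the corresponding terms $\sum_{J\not\ni 1}\epsilon_J\det\bigl(x[J\cup(J+\ell),\,:\,]\bigr)$ form a nonzero polynomial in rows $2,\ldots,2\ell$, i.e.\ in your $y$-variables. Hence in fact $\pitz(x)=\sum_j F_j(y)\,x_{1,j}+G(y)$ with $G\not\equiv 0$, which is not of the shape~\eqref{eq: def of pol Delta}: there the inhomogeneous term must depend on a variable set $z$ \emph{disjoint} from $y$, and the proof of Theorem~\ref{thm: General Thm} genuinely uses this separation to freeze $G(z^\ast)-m$ while letting $y$ range over the Zariski-dense set supplied by Theorem~\ref{thm: related polynomials and prime points}. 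The paper sidesteps the problem by expanding along a \emph{column}: every $2n\times 2n$ minor in the identity uses all $2n$ columns, so the last-column expansion $\pitz(x)=\sum_{i=1}^{2\ell}B_i(y)\,x_{i,2n}$ has no leftover and yields $2\ell\ge 4$ linear variables, covering $n=1$ uniformly.

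Two smaller points also need attention. Your sharpness check for $\varepsilon=2n-1$ by ``specialising to $\ell=n$'' only produces a witness in that single case; for each fixed $\ell>n$ you still need an odd $y$ with some $F_j(y)\not\equiv 0\pmod{2^{2n-1}}$, and this requires an explicit construction of the sort carried out in Lemma~\ref{lem: parity conditinos on rectangular coefficients}. And for $\ell>n$ your inner coefficients $\alpha_i(\mathcal{V})$ are \emph{sums} of $(2n-2)\times(2n-2)$ minors over the varying row sets $J^\ast\setminus\{i\}$, so Example~\ref{exa: det example for intertwined} (which treats single minors differing by one row or column) does not directly show that some pair among the $\alpha_i$ is intertwined of depth $2n-2$. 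The paper's column expansion gives a cleaner recursion: by formula~\eqref{eq: A_i(y) expressed through Pitzians} one has $B_i(y)=\sum_k(-1)^k x_{\hat{i},k}\,\pitz\bigl(M_{i,\hat{i}}^k(y)\bigr)$ and $B_{\hat{i}}(y)=\sum_k(-1)^k x_{i,k}\,\pitz\bigl(M_{i,\hat{i}}^k(y)\bigr)$, so the pair $(B_i,B_{\hat{i}})$ shares identical coefficient polynomials, each a \emph{single} Pfaffian of a smaller rectangular matrix, and the inductive intertwining step closes immediately.
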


Theorem \ref{thm: Rectangular matrices variety} is also a consequence
of Theorem \ref{thm: Weak General Thm};
in particular, the proof relies on the fact that the polynomial $\pitz\left(x\right)$
is of the form \ref{eq: def of pol Delta}, with coefficients that
are intertwined, as detailed below.

\subsection{Proof of Theorem \ref{thm: Rectangular matrices variety} }

Throughout this section, for $i\in\left\{ 1,\ldots,2\l\right\} $
we denote $\hat{i}:=\left(i+\l\right)\left(\mod\,2\l\right)$. For
$x\in\mbox{Mat}_{\,2\l\times2n}\left(\F\right)$,
let $\left(\xi_{1},\ldots,\xi_{2\l}\right)^{\mbox{t}}$ denote the
last column of $x$ and let $y\in\mbox{Mat}_{2\l\times\left(2n-1\right)}\left(\F\right)$
denote the matrix obtained from $x$ by deleting its last column.
A key ingredient in the proof of Theorem
\ref{thm: Rectangular matrices variety} is that the form $\pf\left(x^{\mbox{t}}\Omega x\right)$
can be expanded along every column of $x$, and in particular along
the last column:
\begin{equation}
\pitz\left(x\right)=\sum_{i=1}^{2\l}B_{i}\left(y\right)\xi_{i}=B_{1}\left(y\right)\xi_{1}+\ldots+B_{2\l}\left(y\right)\xi_{2\l},\label{eq: P(x) along last column}
\end{equation}
where the coefficients $B_{i}\left(y\right)$ are polynomials in the
entries of $y$ given by:
\begin{eqnarray}
B_{i}\left(y\right) & = & \sum_{k=1}^{2n-1}\left(-1\right)^{k}\cdot x_{\hat{i},k}\cdot\pitz\left(M_{i,\hat{i}}^{k,2n}\left(x\right)\right)\nonumber \\
 & = & \sum_{k=1}^{2n-1}\left(-1\right)^{k}\cdot x_{\hat{i},k}\cdot\pitz\left(M_{i,\hat{i}}^{k}\left(y\right)\right)\label{eq: A_i(y) expressed through Pitzians}
\end{eqnarray}
 (recall the notation $M_{i_{1},\ldots,i_{k}}^{j_{1},\ldots,j_{l}}$
introduced at the end of Section \ref{sec: Introduction and examples}).
This, along with some further facts that we shall utilize on the structure
of $B_{i}\left(y\right)$, is proved in the short note \cite{Pitzian}.

For a matrix $x'$ with $2\l$ rows (such as $x$ and $y$) and an
integer $1\leq k\leq\l$, we define the following matrix:
\[
Z_{t_{1},\ldots,t_{k}}\left(x'\right):=\left(\begin{smallmatrix}-- & R_{t_{1}}\left(x'\right) & --\\
-- & R_{\widehat{t_{1}}}\left(x'\right) & --\\
 & \vdots\\
-- & R_{t_{k}}\left(x'\right) & --\\
-- & R_{\widehat{t_{k}}}\left(x'\right) & --
\end{smallmatrix}\right),
\]
where $\left\{ t_{1},\ldots,t_{k}\right\} \subset\left\{ 1,\ldots,\l\right\} $
are such that $t_{1}<\ldots<t_{k}$. Then $Z_{t_{1},\ldots,t_{k}}\left(x'\right)$
has $2k$ rows, and the same number of columns as $x'$.

In order to apply Theorem \ref{thm: Weak General Thm}, we begin by
establishing parity conditions on the coefficients $B_{i}\left(y\right)$.
\begin{lem}
\label{lem: parity conditinos on rectangular coefficients}Let $x\in\mbox{Mat}_{\,2\l\times2n}\left(\ZZ\right)$
be odd, and $y=M^{2n}\left(x\right)$. Fix $i\in\left\{ 1,\ldots,2\l\right\} $
and let $B_{i}\left(y\right)$ be as in \ref{eq: P(x) along last column}.
Then $B_{i}\left(y\right)\equiv0\left(\mbox{mod }2^{2n-2}\right)$,
and it can be either $0\left(\mbox{mod }2^{2n-1}\right)$ or $2^{2n-2}\left(\mbox{mod }2^{2n-1}\right)$;
namely, there exist odd $y^{0},y^{1}\in\mbox{Mat}_{\,2\l\times\left(2n-1\right)}\left(\ZZ\right)$
for which $B_{i}\left(y^{0}\right)\equiv0\left(\mbox{mod }2^{2n-1}\right)$
and $B_{i}\left(y^{1}\right)\equiv2^{2n-2}\left(\mbox{mod }2^{2n-1}\right)$.
\end{lem}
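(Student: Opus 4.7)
Plan. The statement has two parts: (i) divisibility $2^{2n-2} \mid B_{i}(y)$ for every odd $y$, and (ii) realization of both residues $0$ and $2^{2n-2}$ modulo $2^{2n-1}$ by odd choices of $y$.

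For (i), the plan is to first establish the stronger fact that $\pitz(x)\equiv 0\pmod{2^{2n-1}}$ for every odd $x\in\mbox{Mat}_{2\l\times 2n}(\ZZ)$, by a Pfaffian-preserving column operation analogous to the odd-determinant argument. Multiply $x$ on the right by $h\in\mbox{SL}_{2n}(\ZZ)$ that fixes the first column and adds the first column to each of the remaining columns; by the Pfaffian identity \ref{eq: Pfaffian identity}, $\pitz(xh)=\det(h)\pitz(x)=\pitz(x)$, so the new matrix $x':=xh$ has the same Pfaffian, one odd column, and $2n-1$ even columns. Write $x'=(c\mid 2D)$ with $c$ odd and $D$ integer; then $(x')^{\mbox{t}}\Omega_{\l}x'$ has $(1,1)$-entry zero, first row and column divisible by $2$, and bottom-right $(2n-1)\times(2n-1)$ block divisible by $4$. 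Expanding $\pf$ along the first row via \ref{eq: Pfaffian formula}, each summand is the product of a first-row entry (divisible by $2$) and a Pfaffian of a $(2n-2)\times(2n-2)$ minor of the bottom block, and the latter equals $4^{n-1}$ times an integer since Pfaffians of size $2(n-1)$ are homogeneous of degree $n-1$. Hence each summand is divisible by $2\cdot 4^{n-1}=2^{2n-1}$. To deduce divisibility of $B_{i}(y)$, fix odd $y$ and pick odd completions $\xi,\xi'$ with $\xi_{i}-\xi'_{i}=2$ and $\xi_{j}=\xi'_{j}$ for $j\neq i$; by the linearity \ref{eq: P(x) along last column} of $\pitz$ in the last column, $2B_{i}(y)=\pitz(y\mid\xi)-\pitz(y\mid\xi')\equiv 0\pmod{2^{2n-1}}$, so $B_{i}(y)\equiv 0\pmod{2^{2n-2}}$.

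For (ii), the residue $0\pmod{2^{2n-1}}$ is realized by $y^{0}:=$ the all-ones matrix: every submatrix $M_{i,\hat{i}}^{k}(y^{0})$ is again all-ones, and a direct check shows $M^{\mbox{t}}\Omega_{\l'}M=0$ for an all-ones $M$ (each off-diagonal entry equals $\sum_{i=1}^{\l'}(1\cdot 1-1\cdot 1)=0$), so $\pitz(M_{i,\hat{i}}^{k}(y^{0}))=0$ for every $k$ and $B_{i}(y^{0})=0$. For the residue $2^{2n-2}\pmod{2^{2n-1}}$, we construct $y^{1}$ by perturbing the all-ones matrix, replacing a small number of entries by $3$ in positions chosen so that $\sum_{k=1}^{2n-1}(-1)^{k}y^{1}_{\hat{i},k}\pitz(M_{i,\hat{i}}^{k}(y^{1}))$ has the desired non-zero residue. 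The base case $n=2$, $\l=2$ is verified by direct computation --- for instance, placing $3$'s at position $(\hat{i},2n-1)$ and at a row outside $\{i,\hat{i}\}$ in an adjacent column yields $B_{i}(y^{1})=\pm 4$, which is $\equiv 2^{2n-2}\pmod{2^{2n-1}}$. The general case follows by induction on $n$, using the inductive parity statement on the $(2\l-2)\times(2n-2)$ submatrices $M_{i,\hat{i}}^{k}(y^{1})$ to arrange the correct total residue, or alternatively by padding the base example with additional rows and columns of $1$'s that do not disturb the parity analysis.

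The main obstacle is part (ii): precisely controlling the residue of $B_{i}(y^{1})$ modulo $2^{2n-1}$, given that each of the $2n-1$ summands in the expansion is a priori only divisible by $2^{2n-3}$. Part (i) already forces the sum to be divisible by $2^{2n-2}$, so the challenge is to engineer the perturbation of the all-ones matrix so that the sum is \emph{not} divisible by $2^{2n-1}$; this requires a careful parity count over the residues of the lower-order Pfaffians appearing in the expansion, which is most cleanly handled by induction on $n$.
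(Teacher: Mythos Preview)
Your argument for part~(i) is correct and genuinely different from the paper's. The paper invokes an external formula (from \cite{Pitzian}) expressing $B_{i}(y)$ as a sum of $(2n-1)\times(2n-1)$ determinants of odd submatrices, each divisible by $2^{2n-2}$. Your route---first proving $\pitz(x)\equiv 0\pmod{2^{2n-1}}$ for odd $x$ via the column operation $x\mapsto xh$ with $h\in\mbox{SL}_{2n}(\ZZ)$, then recovering $2^{2n-2}\mid B_{i}(y)$ by differencing two odd completions---is self-contained and avoids that formula entirely. Likewise, your choice $y^{0}=$ all-ones is cleaner than the paper's: since $M^{\mathrm{t}}\Omega_{\l-1}M=0$ for an all-ones $M$, each $\pitz(M_{i,\hat{i}}^{k}(y^{0}))$ vanishes identically and $B_{i}(y^{0})=0$ on the nose, whereas the paper only arranges $B_{i}(y^{0})\equiv 0\pmod{2^{2n-1}}$ via row-congruence conditions.

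The gap is your construction of $y^{1}$. You verify one base case ($n=\l=2$) and then gesture at ``induction on $n$'' or ``padding with rows and columns of $1$'s,'' but neither is worked out, and you yourself flag this as the main obstacle. The difficulty is real: in the expansion $B_{i}(y)=\sum_{k}(-1)^{k}y_{\hat{i},k}\pitz(M_{i,\hat{i}}^{k}(y))$, perturbing entries of the all-ones matrix in row $\hat{i}$ alone does nothing (since $M_{i,\hat{i}}^{k}$ deletes that row and stays all-ones, so each $\pitz$ is still zero), and once you perturb elsewhere, several terms move simultaneously and you must control a sum of $(2n-3)$-adically small quantities to a $(2n-1)$-adic precision. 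The inductive statement you would need is not the lemma itself but something sharper about realizing specific residues for $\pitz$ on $2(\l-1)\times 2(n-1)$ matrices, and you have not formulated it.

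The paper sidesteps this entirely by returning to the determinant-sum formula
\[
B_{i}(y)=\sum_{t_{1}<\cdots<t_{n-1}}\det\begin{pmatrix}R_{\hat{i}}(y)\\ Z_{t_{1},\ldots,t_{n-1}}(y)\end{pmatrix},
\]
fixing one index set $\{t_{1},\ldots,t_{n-1}\}$, and choosing $y^{1}$ so that (a) $R_{j}(y^{1})\equiv R_{\hat{j}}(y^{1})\pmod{2^{2n-2}}$ for $j\notin\{t_{1},\ldots,t_{n-1}\}$, which forces every other summand to have two congruent rows and hence be divisible by $2^{2n-1}$, and (b) the distinguished $(2n-1)\times(2n-1)$ block is congruent to $2I+J\pmod{2^{2n-2}}$, whose determinant $(2n+1)\cdot 2^{2n-2}$ is exactly $2^{2n-2}\pmod{2^{2n-1}}$. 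This is explicit and uniform in $n,\l$; if you want to keep your approach self-contained, you would need to supply a comparably explicit $y^{1}$, which your proposal does not yet do.
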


\begin{proof}
The polynomial $B_{i}\left(y\right)$ can
be presented as the sum of determinants of $\left(2n-1\right)\times\left(2n-1\right)$
sub-matrices of $y$ as follows (\cite{Pitzian}):
\begin{equation}
B_{i}\left(y\right)=\sum_{\substack{t_{1}<\ldots<t_{n-1}\\
\in\left\{ 1,\ldots,m\right\} \setminus\left\{ i\right\}
}
}\det\left(\begin{smallmatrix}R_{\widehat{i}}\left(y\right)\\
Z_{t_{1},\ldots,t_{n-1}}\left(y\right)
\end{smallmatrix}\right).\label{eq: A_i(y) is sum of determinants}
\end{equation}
Each determinant in this sum is of an odd matrix, and is therefore
divisible by $2^{2n-2}$; thus, $B_{i}\left(y\right)\equiv0\left(\mbox{mod }2^{2n-2}\right)$.

Fix $t_{1}<\ldots<t_{n-1}\in\left\{ 1,\ldots,\l\right\} \setminus\left\{ i\right\} $
and define $y^{1}\in\mbox{Mat}_{\,2\l\times\left(2n-1\right)}\left(\ZZ\right)$
with the following two properties. Firstly,
\[
R_{j}\left(y^{1}\right)\equiv R_{\hat{j}}\left(y^{1}\right)\left(\mbox{mod }2^{2n-2}\right)\mbox{ for every }j\notin\left\{ t_{1},\ldots,t_{n-1}\right\} .
\]
In particular, for every $\left\{ s_{1},\ldots,s_{n-1}\right\} \neq\left\{ t_{1},\ldots,t_{n-1}\right\} $,
\[
\det\left(\begin{smallmatrix}R_{\widehat{i}}\left(y^{1}\right)\\
Z_{s_{1},\ldots,s_{n-1}}\left(y^{1}\right)
\end{smallmatrix}\right)\equiv0\left(\mbox{mod }2^{2n-2}\right),
\]
since at least two of the rows are equivalent modulo $2^{2n-2}$.
Secondly,
\[
\left(\begin{smallmatrix}R_{i}\left(y^{1}\right)\\
Z_{t_{1},\ldots,t_{n-1}}\left(y^{1}\right)
\end{smallmatrix}\right)\equiv\left(\begin{smallmatrix}R_{\widehat{i}}\left(y^{1}\right)\\
Z_{t_{1},\ldots,t_{n-1}}\left(y^{1}\right)
\end{smallmatrix}\right)\equiv\left(\begin{smallmatrix}3 & 1 & \cdots & 1\\
1 & 3 &  & \vdots\\
\vdots &  & \ddots & 1\\
1 & \cdots & 1 & 3
\end{smallmatrix}\right)\left(\mbox{mod }2^{2n-2}\right).
\]
Since the eigenvalues of the matrix on the right-hand side are $2n+1$
(of multiplicity $1$) and $2$ (of multiplicity $2n-2$), its determinant
equals $\left(2n+1\right)\cdot2^{2n-2}$. In particular,
\[
\det\left(\begin{smallmatrix}R_{\widehat{i}}\left(y^{1}\right)\\
Z_{t_{1},\ldots,t_{n-1}}\left(y^{1}\right)
\end{smallmatrix}\right)\equiv\left(2n+1\right)\cdot2^{2n-2}\left(\mbox{mod }2^{2n-1}\right)\equiv2^{2n-2}\left(\mbox{mod }2^{2n-1}\right).
\]
In formula \ref{eq: A_i(y) is sum of determinants} for $B_{i}\left(y^{1}\right)$,
one summand is equivalent to $2^{2n-2}\left(\mbox{mod }2^{2n-1}\right)$,
and the remaining summands are equivalent to $0\left(\mbox{mod }2^{2n-1}\right)$;
thus, $B_{i}\left(y^{1}\right)\equiv2^{2n-2}\left(\mbox{mod }2^{2n-1}\right)$.

Finally, let $y^{0}\in\mbox{Mat}_{2\l\times\left(2n-1\right)}\left(\ZZ\right)$
be such that $R_{i}\left(y^{0}\right)\equiv R_{\hat{i}}\left(y^{0}\right)\left(\mbox{mod }2^{2n-2}\right)$
for every $i\in\left\{ 1,\ldots,2\l\right\} $. Then for every $i$
and every $t_{1}<\ldots<t_{n-1}\in\left\{ 1,\ldots,\l\right\} \setminus\left\{ i\right\} $,
\[
\det\left(\begin{smallmatrix}R_{\widehat{i}}\left(y^{0}\right)\\
Z_{t_{1},\ldots,t_{n-1}}\left(y^{0}\right)
\end{smallmatrix}\right)\equiv0\left(\mbox{mod }2^{2n-2}\right);
\]
thus, $B_{i}\left(y^{0}\right)\equiv0\left(\mbox{mod }2^{2n-1}\right)$
for every $i\in\left\{ 1,\ldots,2\l\right\} $.
\end{proof}

\begin{prop}
\label{prop: A_i and its twin are maximally related}For every
$i\in\left\{ 1,\ldots,\l\right\} $, $B_{i}\left(x\right)$ and $B_{\hat{i}}\left(x\right)$
are intertwined. \end{prop}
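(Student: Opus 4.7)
The plan is to combine the explicit column-expansion formula \ref{eq: A_i(y) expressed through Pitzians} with the symmetry $\hat{\hat{i}}=i$ and $M_{\hat{i},i}^{k}(y)=M_{i,\hat{i}}^{k}(y)$, in order to write both $B_{i}$ and $B_{\hat{i}}$ as linear forms sharing the same cofactor polynomials. Concretely, the expansion gives
\[
B_{i}(y)=\sum_{k=1}^{2n-1}(-1)^{k}\,y_{\hat{i},k}\,\pitz\bigl(M_{i,\hat{i}}^{k}(y)\bigr), \qquad B_{\hat{i}}(y)=\sum_{k=1}^{2n-1}(-1)^{k}\,y_{i,k}\,\pitz\bigl(M_{i,\hat{i}}^{k}(y)\bigr),
\]
with identical cofactors. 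Setting
\[
\mathcal{U}=\{y_{\hat{i},k}\}_{k=1}^{2n-1},\ \widetilde{\mathcal{U}}=\{y_{i,k}\}_{k=1}^{2n-1},\ \mathcal{V}=\bigl\{\text{entries of }M_{i,\hat{i}}(y)\bigr\},\ \mathcal{W}=\emptyset,
\]
together with $\alpha_{k}(\mathcal{V})=(-1)^{k}\,\pitz\bigl(M_{i,\hat{i}}^{k}(y)\bigr)$ and $\beta\equiv 0$, the pair $B_{i},B_{\hat{i}}$ immediately exhibits the structural form required by the definition of intertwined polynomials at depth $d\geq 2$. What remains is to display two cofactors $\alpha_{k_{1}},\alpha_{k_{2}}$ which are themselves intertwined of the next lower depth.

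For this last step I would prove, by induction on $n$, a companion statement in the spirit of Example \ref{exa: det example for intertwined}: if $A,B$ are two $2\ell'\!\times\! 2n'$ matrices which agree in every column except one, then $\pitz(A)$ and $\pitz(B)$ are intertwined. The proof is to expand both $\pitz$'s along the single differing column, so that the cofactors (which depend only on entries of the shared columns) are literally the same polynomial --- yielding precisely the intertwined form with $\mathcal{U}$ and $\widetilde{\mathcal{U}}$ being the two distinct columns. Applying this lemma to the pair $M_{i,\hat{i}}^{k_{1}}(y),\,M_{i,\hat{i}}^{k_{2}}(y)$, which differ in exactly one column ($k_{2}$ versus $k_{1}$), finishes the induction. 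The base case $n=1$ is direct: $\pitz(x)=y^{\mathrm{t}}\Omega_{\ell}\xi$ is bilinear, so that $B_{i}(y)=\pm y_{\hat{i}}$ is a single variable and intertwinedness of depth $1$ is immediate.

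The main obstacle is bookkeeping. First, the column-expansion formula \ref{eq: A_i(y) expressed through Pitzians} is only explicitly written in the paper for the \emph{last} column; an analogous expansion along any other column $j$ is needed and will have to be extracted from the proof of that formula in \cite{Pitzian}. Second, after removing two paired rows $i,\hat{i}$ from a $2\ell$-row matrix the involution $\hat{\,\cdot\,}$ restricts to a fixed-point-free involution on the remaining $2\ell-2$ rows, and after relabeling the reduced matrix fits the $\pitz$-framework with $\Omega_{\ell-1}$ in place of $\Omega_{\ell}$; one must check that this relabeling is compatible with the recursive definition of $\pitz$, so that the induction hypothesis can legitimately be invoked on the reduced matrices. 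Neither difficulty is serious, but the verifications of the sign conventions and the preservation of the symplectic pairing under row/column deletion are where the real content of the argument lies.
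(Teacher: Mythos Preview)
Your approach is essentially the paper's: write $B_i$ and $B_{\hat{i}}$ via formula \eqref{eq: A_i(y) expressed through Pitzians} as linear forms in the disjoint variable sets $\{y_{\hat{i},k}\}$ and $\{y_{i,k}\}$ with common cofactors $\alpha_k=(-1)^k\pitz(M_{i,\hat{i}}^{k}(y))$, then show by induction that some pair of cofactors is intertwined. The paper sidesteps the obstacle you flag (needing an expansion along an arbitrary column) by choosing the specific pair $k_1=2n-2$, $k_2=2n-1$: the matrices $M_{i,\hat{i}}^{2n-1}(y)$ and $M_{i,\hat{i}}^{2n-2}(y)$ then differ only in their \emph{last} column, so the same last-column expansion \eqref{eq: P(x) along last column} applies again and the induction closes without ever invoking a general-column formula. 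Note also that your claim ``$M_{i,\hat{i}}^{k_1}(y)$ and $M_{i,\hat{i}}^{k_2}(y)$ differ in exactly one column'' is only true when $k_1$ and $k_2$ are adjacent; for $|k_1-k_2|>1$ the column shift makes them differ in several positions, so you would in any case be forced to pick a consecutive pair.
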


\begin{proof}
Assume the claim holds for $x'\in\mbox{Mat}_{\,2\left(\l-1\right)\times2\left(n-1\right)}$.
Thus, according to Formula \ref{eq: P(x) along last column}, if $x',x''\in\mbox{Mat}_{\,2\left(\l-1\right)\times2\left(n-1\right)}$
differ only by their last column, then $\pitz\left(x'\right)$ and
$\pitz\left(x''\right)$ are intertwined through the set $\left\{ B_{l}\right\} _{1\leq l\leq2\l-2}$.
Since $\pitz\left(M_{i,\hat{i}}^{2n-1}\left(y\right)\right)$ and
$\pitz\left(M_{i,\hat{i}}^{2n-2}\left(y\right)\right)$ differ only
by their last column, they are therefore intertwined through the set
$\left\{ B_{l}\left(M_{i,\hat{i}}^{2n-1,2n-2}\left(y\right)\right)\right\} _{l\in\left\{ 1,\ldots,2\l\right\} \setminus\left\{ i,\hat{i}\right\} }$.
By Formula \ref{eq: A_i(y) expressed through Pitzians}, we conclude
that $B_{i}\left(x\right)$ and $B_{\hat{i}}\left(x\right)$ are intertwined
through the polynomials $\left\{ \pitz\left(M_{i,\hat{i}}^{k}\left(y\right)\right)\right\} _{k=1}^{2n-1}$.
\end{proof}

\begin{proof}[Proof of Theorem \ref{thm: Rectangular matrices variety}]
According to formula \ref{eq: P(x) along last column},
$\inpol\left(x\right)=\pitz\left(x\right)$ is of the form \ref{eq: def of pol Delta},
where by Proposition \ref{prop: A_i and its twin are maximally related}
two of the coefficient-polynomials are intertwined. Lemma \ref{lem: parity conditinos on rectangular coefficients}
asserts that the maximal $\maxexp\in\NN$ such that $2^{\maxexp-1}$
divides every $B_{i}\left(y\right)$ for every odd $y$ is $\maxexp=2n-1$.
Finally, we claim that $\pitz\left(x\right)\equiv0\left(\mbox{mod }2^{2n-1}\right)$
for every odd $x$ in $\mbox{Mat}_{2\l\times2n}\left(\ZZ\right)$.
To this end, we consider the following formula for $\pitz\left(x\right)$
(\cite{Pitzian}):
\[
\pitz\left(x\right)=\sum_{\substack{\left(t_{1},\ldots t_{n}\right)\in\left\{ 1,\ldots,\l\right\} ^{n}\\
t_{1}<\ldots<t_{n}
}
}\det\left(Z_{t_{1},\ldots,t_{n}}\left(x\right)\right);
\]
it asserts that $\pitz\left(x\right)$ is the sum of determinants
of odd $2n\times2n$ matrices, and is therefore divisible by $2^{2n-1}$.
By Theorem \ref{thm: Weak General Thm}, prime matrices are Zariski-dense
in $\vrec_{m}$ if and only if $m\equiv0\left(\mbox{mod }2^{2n-1}\right)$.
\end{proof}

\section{The Permanent Variety\label{sec: non-homogenous varieties}}

Observe that Theorem \ref{thm: General Thm} relies purely on the
combinatorial properties of the defining polynomial $\inpol$ for
the variety $\vgen_{m}$, and in particular does not assume homogeneity
of the variety under a group action. This gives rise to examples of
Zariski-density of prime points in varieties which are not necessarily
homogeneous, such as the hafnian variety mentioned in Remark \ref{rem: hafnian}.
Another such example is the permanent variety.

\begin{defn}
The \emph{permanent} of an $n\times n$ matrix $x=\left(x_{ij}\right)$
is defined as
\[
\mbox{perm}\left(x\right)=\sum_{\sigma\in S_{n}}\prod_{i=1}^{n}x_{i,\sigma\left(i\right)}.
\]

\end{defn}
The permanent of a matrix can be expanded along any row or column;
e.g., an expansion along the $i$-th row is given by
\[
\mbox{perm}\left(x\right)=\sum_{j=1}^{n}x_{i,j}\cdot\mbox{perm}\left(M_{i}^{j}\left(x\right)\right).
\]
The variety of matrices with fixed permanent
\[
\vper_{m}=\left\{ x\in\mbox{Mat}_{n}\left(\F\right)\mid\mbox{perm}\left(x\right)=m\right\}
\]
is not invariant under a group action.

We note that the permanent is to the determinant as the hafnian
is to the Pfaffian: it is obtained form switching all the negative
signs in the determinant polynomial to positive signs. However, while
the congruence condition on the hafnian variety for Zariski-density
of prime points was identical to the one of the Pfaffian, the situation
with the permanent is different from the determinant case.

\def\thethmrpt{\ref{thm: permanent variety}}
\begin{thmrpt}

Let $n\geq3$ and $0\neq m\in\ZZ$. Write $2^{s}-1\leq n<2^{s+1}-1$
for a unique integer $s\geq2$. Then prime matrices are Zariski-dense
in $\vper_{m}$ if and only if
\[
m\equiv\begin{cases}
2^{n-s}\left(\mbox{mod }2^{n-s+1}\right) & \mbox{when }n=2^{s}-1\\
0\left(\mbox{mod }2^{n-s}\right) & \mbox{when }2^{s}-1<n<2^{s+1}-1.
\end{cases}
\]

\end{thmrpt}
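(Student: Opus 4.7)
The plan is to apply Theorem \ref{thm: Weak General Thm} to $\inpol(x)=\perm(x)$. Expanding along the first row produces
\[
\perm(x)\;=\;\sum_{j=1}^{n} F_{j}(y)\,\xi_{j},\qquad F_{j}(y)=\perm\!\bigl(M_{1}^{j}(x)\bigr),
\]
which is of the form \ref{eq: def of pol Delta} with $G\equiv 0$. Since the permanent satisfies the same recursive expansion as the determinant, merely without sign alternations, the argument of Example \ref{exa: det example for intertwined} transposes verbatim to show that $F_{1}$ and $F_{2}$ are intertwined of depth $n-1$: they are permanents of two $(n-1)\times(n-1)$ matrices differing only in one column, whose entries are independent variables.

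The bulk of the proof is a $2$-adic analysis of $\perm(x)$ on odd matrices. Writing each odd entry as $x_{ij}=1+2e_{ij}$ with $e_{ij}\in\ZZ$ and expanding the products $\prod_{i} x_{i,\sigma(i)}$, one obtains
\[
\perm(x)\;=\;\sum_{k=0}^{n} 2^{k}(n-k)!\,p_{k}(e),\qquad p_{k}(e)=\sum_{\substack{S,T\subseteq\{1,\ldots,n\}\\ |S|=|T|=k}}\perm\!\bigl(e[S,T]\bigr),
\]
where $e[S,T]$ denotes the $k\times k$ submatrix with rows $S$ and columns $T$, and $p_{0}=1$. Legendre's formula $v_{2}(m!)=m-s_{2}(m)$, with $s_{2}$ the binary digit sum, shows that the $k$-th term has $2$-adic valuation at least $n-s_{2}(n-k)$. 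The assumption $2^{s}-1\le n<2^{s+1}-1$ forces $s_{2}(m)\le s$ for every $0\le m\le n$, so every term is divisible by $2^{n-s}$, and hence $\perm(x)\equiv 0\pmod{2^{n-s}}$ for every odd $x$.

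The two cases of the theorem reflect how many indices $k$ saturate the bound $s_{2}(n-k)=s$. When $n=2^{s}-1$, the unique $m\le n$ with $s_{2}(m)=s$ is $m=n$ itself, achieved only for $k=0$; every remaining term has valuation at least $n-s+1$, so $\perm(x)\equiv n!\equiv 2^{n-s}\pmod{2^{n-s+1}}$ uniformly in $x$, using that $n!/2^{n-s}$ is odd. When $2^{s}-1<n<2^{s+1}-1$, several values of $k$ saturate the bound, so $\perm(x)\bmod 2^{n-s+1}$ genuinely depends on $e$, but the uniform congruence $\perm(x)\equiv 0\pmod{2^{n-s}}$ still holds.

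It remains to identify the exponent $\maxexp$ of Theorem \ref{thm: Weak General Thm}. The coefficients $F_{j}(y)$ are permanents of odd $(n-1)\times(n-1)$ matrices, and the same decomposition applied one level down gives the uniform lower bound $v_{2}(F_{j}(y))\ge (n-1)-\lfloor\log_{2} n\rfloor$; one checks that this bound is sharp by exhibiting a single odd $y$ (for instance a mostly-ones matrix with a few entries equal to $3$) achieving equality. Since $\lfloor\log_{2} n\rfloor=s-1$ when $n=2^{s}-1$ and $\lfloor\log_{2} n\rfloor=s$ when $n>2^{s}-1$, one obtains $\maxexp=n-s+1$ in the first case and $\maxexp=n-s$ in the second. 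Feeding these exponents and the residues computed above into Theorem \ref{thm: Weak General Thm} yields precisely the congruence conditions stated in Theorem \ref{thm: permanent variety}. The main obstacle is the careful tracking of the exponents $\lfloor\log_{2} n\rfloor$ and $\lfloor\log_{2}(n+1)\rfloor$ at the two recursion levels: they differ exactly when $n+1$ is a power of $2$, and this shift of one is what produces the two distinct branches in the final condition on $m$.
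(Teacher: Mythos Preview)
Your proof follows the paper's route at the top level: expand $\perm(x)$ along the first row, observe that any two of the resulting coefficients are intertwined by the same recursion as in Example~\ref{exa: det example for intertwined}, and invoke Theorem~\ref{thm: Weak General Thm}. The difference is in how you extract the $2$-adic information. The paper proves its Lemma~\ref{Lemma: perm mod powers of 2} by induction on $n$ via the one-entry identity $\perm(x)=\perm(x')\pm 2\,\perm(M_i^j(x'))$, anchoring at the all-ones matrix $J$ with $\perm(J)=n!$ and walking through the three subcases $n=2^s-1$, $n=2^s$, $2^s<n<2^{s+1}-1$. Your expansion $\perm(x)=\sum_{k}2^{k}(n-k)!\,p_k(e)$ is a genuinely different device and is cleaner for the lower bound: the role of Legendre's formula is explicit, and the case $n=2^s-1$ becomes transparent because $k=0$ is the unique index with $s_2(n-k)=s$. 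Your identification of $\maxexp$ via $\lfloor\log_2 n\rfloor$ at the $(n-1)$-level is also correct and in fact slightly sharper in presentation than the paper's case split.

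The one place where your argument is thinner than the paper's is the \emph{sharpness} of the bound $v_2(F_j(y))\ge (n-1)-\lfloor\log_2 n\rfloor$, which you need in order to pin down $\maxexp$ exactly rather than just bound it from below. You assert that ``a mostly-ones matrix with a few entries equal to $3$'' witnesses equality, but do not exhibit one; note that the all-ones $(n-1)\times(n-1)$ matrix gives $v_2((n-1)!)=(n-1)-s_2(n-1)$, and $s_2(n-1)$ can be strictly smaller than $\lfloor\log_2 n\rfloor$ (e.g.\ $n=5$). The paper fills this by an explicit inductive construction (part~\ref{enu 2: both cases occur} of Lemma~\ref{Lemma: perm mod powers of 2}): given a sharp $(n-2)\times(n-2)$ example $\tilde x^{1}$, border it by a row and column of $1$'s, and compare the two matrices with corner entry $1$ versus $3$; their permanents differ by $2\,\perm(\tilde x^{1})$, which has valuation exactly $(n-1)-\lfloor\log_2 n\rfloor$, so one of the two is sharp. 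Your expansion formula can reproduce this by tracking how $p_1(e)$ changes, but as written the step is only asserted.
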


The necessity part of Theorem \ref{thm: permanent variety} is slightly
more involved than it was in the previous examples, due to the fact
that the permanent is not invariant under linear actions on the rows
of the matrix. We shall require the following Lemma, whose proof
has been suggested in \cite{MO}.
\begin{lem}
\label{Lemma: perm mod powers of 2}Let $n,s\geq1$ be integers, and
let $x\in\mbox{Mat}_{\, n}\left(\ZZ\right)$ with odd entries.
\begin{enumerate}
\item \label{enu 1: perm mod power of 2}The permanent of $x$ satisfies
\begin{equation}
\perm\left(x\right)\equiv\begin{cases}
2^{n-s}\left(\mbox{mod }2^{n-s+1}\right) & n=2^{s}-1\\
0\left(\mbox{mod }2^{n-s}\right) & 2^{s}\leq n<2^{s+1}-1
\end{cases}.\label{eq: permanent mod powers of 2}
\end{equation}

\item \label{enu 2: both cases occur}Furthermore, when $2^{s}\leq n<2^{s+1}-1$,
$\perm\left(x\right)$ can be congruent to either $0$ or $2^{n-s}$
modulo $2^{n-s+1}$. Both cases occur: there exist odd matrices $x^{0},x^{1}\in\mbox{Mat}_{n}\left(\ZZ\right)$
such that $\perm\left(x^{1}\right)\equiv2^{n-s}\left(\mbox{mod }2^{n-s+1}\right)$
and $\perm\left(x^{0}\right)\equiv0\left(\mbox{mod }2^{n-s+1}\right)$.
\end{enumerate}
\end{lem}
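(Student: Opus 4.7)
The plan is to write $x=J_{n}+2y$, where $J_{n}$ is the $n\times n$ all-ones matrix and $y\in\mbox{Mat}_{n}(\ZZ)$ (possible since every entry of $x$ is odd), and to expand $\perm(x)$ by the row-multilinearity of the permanent. Since each row outside a subset $S\subseteq\{1,\ldots,n\}$ is the all-ones row, a direct count gives
\[
\perm(x)\;=\;\sum_{k=0}^{n}2^{k}(n-k)!\;T_{k}(y),\qquad T_{k}(y)\;:=\;\sum_{\substack{S\subseteq\{1,\ldots,n\}\\|S|=k}}\sum_{\phi:S\hookrightarrow\{1,\ldots,n\}}\prod_{i\in S}y_{i,\phi(i)},
\]
with $T_{0}(y)=1$. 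By Legendre's formula the $2$-adic valuation of the $k$-th coefficient equals $v_{2}(2^{k}(n-k)!)=n-s_{2}(n-k)$, where $s_{2}(\cdot)$ denotes the binary digit sum. In the range $2^{s}-1\le n<2^{s+1}-1$, the only integer $\le 2^{s+1}-1$ with digit sum $s+1$ is $2^{s+1}-1$ itself, and $n-k\le n$ never attains this value; hence $s_{2}(n-k)\le s$ and every coefficient has $v_{2}\ge n-s$. This yields $\perm(x)\equiv 0\pmod{2^{n-s}}$, the divisibility statement in~(\ref{enu 1: perm mod power of 2}).

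For the sharp form when $n=2^{s}-1$, any $k\ge 1$ strictly decreases the digit sum of $2^{s}-1$, so the inequality $s_{2}(n-k)\le s$ is tight only at $k=0$; all higher-$k$ terms are multiples of $2^{n-s+1}$, and one is left with $\perm(x)\equiv n!\pmod{2^{n-s+1}}$. Since $v_{2}(n!)=n-s$ here, $n!/2^{n-s}$ is odd, whence $\perm(x)\equiv 2^{n-s}\pmod{2^{n-s+1}}$, as asserted. For part~(\ref{enu 2: both cases occur}), put $K_{n}:=\{k:s_{2}(n-k)=s\}$; since $(n-k)!/2^{n-s-k}$ is odd exactly when $k\in K_{n}$ while the terms with $k\notin K_{n}$ contribute multiples of $2^{n-s+1}$, reducing the expansion modulo $2^{n-s+1}$ gives
\[
\frac{\perm(x)}{2^{n-s}}\;\equiv\;\sum_{k\in K_{n}}T_{k}(y)\pmod{2}.
\]
The task thus reduces to exhibiting $y^{0},y^{1}$ for which this sum takes opposite parities.

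The key observation is that, in the range $2^{s}\le n<2^{s+1}-1$, the set $K_{n}$ always contains the strictly positive element $k_{0}:=n-(2^{s}-1)$ (with $k_{0}\ge 1$ since $n\ge 2^{s}$), because $s_{2}(n-k_{0})=s_{2}(2^{s}-1)=s$. Then $T_{k_{0}}(y)$ contains the degree-$k_{0}$ monomial $y_{1,1}y_{2,2}\cdots y_{k_{0},k_{0}}$ (from $S=\{1,\ldots,k_{0}\}$ and $\phi=\mathrm{id}$), and by degree this monomial cannot appear in any other $T_{k}$, so $P(y):=\sum_{k\in K_{n}}T_{k}(y)$ is a non-constant multilinear polynomial over $\mathbb{F}_{2}$. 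The main obstacle is to pass from non-constancy of $P$ as a polynomial to non-constancy as a function: this is handled by the standard fact that a multilinear polynomial over $\mathbb{F}_{2}$ represents the zero function only if it is the zero polynomial (easy induction on the number of variables), applied to $P-P(0)$. Since $P$ therefore takes both values in $\mathbb{F}_{2}$ as $y$ varies, lifting to integer matrices $y^{0},y^{1}$ and setting $x^{j}=J_{n}+2y^{j}$ produces odd matrices realizing both residues $0$ and $2^{n-s}$ modulo $2^{n-s+1}$, completing~(\ref{enu 2: both cases occur}).
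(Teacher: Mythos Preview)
Your proof is correct and takes a genuinely different route from the paper's. Both arguments anchor at the all-ones matrix $J_n$ and use Legendre's formula for $v_2(n!)$, but the paper proceeds incrementally: it verifies the claim for $J_n$, then reaches every odd matrix by a chain of $\pm 2$ changes to single entries, using the identity $\perm(x)=\perm(x')\pm 2\,\perm(M_i^j(x'))$ together with the induction hypothesis on $(n-1)\times(n-1)$ matrices, with a three-way case split on where $n$ and $n-1$ sit relative to $2^s-1$. You instead expand $\perm(J_n+2y)$ all at once by row-multilinearity to get the closed form $\sum_k 2^k(n-k)!\,T_k(y)$ and read off the $2$-adic valuation of each coefficient directly as $n-s_2(n-k)$; this bypasses the induction and case analysis entirely and makes Part~1 essentially a one-line computation. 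For Part~2 the contrast is sharper: the paper constructs explicit matrices $x,x'$ differing in a single corner entry and argues inductively that their permanents differ by $2^{n-s}\pmod{2^{n-s+1}}$, whereas you observe that $\perm(x)/2^{n-s}\equiv\sum_{k\in K_n}T_k(y)\pmod 2$ is a non-constant multilinear polynomial over $\mathbb{F}_2$ (because $k_0=n-(2^s-1)\ge 1$ contributes the monomial $y_{1,1}\cdots y_{k_0,k_0}$ with coefficient $1$) and invoke the standard fact that such a polynomial is non-constant as a function. Your argument is cleaner and more uniform; the paper's has the minor advantage of producing explicit witnesses $x^0,x^1$.
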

The following fact is instrumental in the proof of Lemma \ref{Lemma: perm mod powers of 2}.
\begin{fact}
\label{lem: 2-power dividing n!}Let $\phi_{2}\left(n\right)$ denote
the highest power of $2$ that divides $n!$, and let $s\geq1$ be
an integer.
\begin{enumerate}
\item If $n=2^{s}-1$ then $\phi_{2}\left(n\right)=n-s$.
\item If $2^{s}\leq n<2^{s+1}-1$, then $\phi_{2}\left(n\right)\geq n-s$.

\end{enumerate}
\end{fact}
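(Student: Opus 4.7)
The plan is to invoke Legendre's formula (also attributed to de Polignac) for the $2$-adic valuation of $n!$, namely $\phi_{2}(n)=n-s_{2}(n)$, where $s_{2}(n)$ denotes the number of $1$'s in the binary expansion of $n$. Given this identity, the two claims of the fact reduce to simple statements about binary digits, so the bulk of the work is merely to read off $s_{2}(n)$ in each of the two regimes.

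For the first case, when $n=2^{s}-1$, the binary expansion of $n$ is $\underbrace{11\cdots1}_{s}$, so $s_{2}(n)=s$ and hence $\phi_{2}(n)=n-s$, which gives the claimed equality.

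For the second case, when $2^{s}\leq n<2^{s+1}-1$, the integer $n$ has an $(s+1)$-digit binary expansion beginning with $1$, and since $n\neq 2^{s+1}-1=\underbrace{11\cdots1}_{s+1}$, at least one of the remaining $s$ binary digits is $0$. Consequently $s_{2}(n)\leq s$, and Legendre's formula yields $\phi_{2}(n)=n-s_{2}(n)\geq n-s$, as required.

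If one prefers a self-contained argument rather than citing Legendre, the identity $\phi_{2}(n)=\sum_{k\geq 1}\lfloor n/2^{k}\rfloor$ can be verified directly by counting, for each $k\geq 1$, the multiples of $2^{k}$ in $\{1,\dots,n\}$ (each integer $j\leq n$ contributes exactly $v_{2}(j)$ to the total, which is also the number of $k$ for which $2^{k}\mid j$). The passage from this series to $n-s_{2}(n)$ is then the standard telescoping computation using the binary expansion $n=\sum_{i}\epsilon_{i}2^{i}$. No step here presents any real obstacle; the proof is essentially a one-line consequence of a classical identity together with the trivial observation that a binary string of length $s+1$ which is not all ones has at most $s$ ones.
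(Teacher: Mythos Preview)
Your proof is correct and follows exactly the same approach as the paper: the paper simply states that the fact is a direct consequence of Legendre's formula $\phi_{2}(n)=n-s_{2}(n)$, and you have supplied the (easy) verification of the two cases from this identity. Your additional paragraph deriving Legendre's formula is not needed but does no harm.
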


This fact is a direct consequence the Legendre
Formula, which states that $\phi_{2}\left(n\right)=n-s_{2}\left(n\right)$,
where $s_{2}\left(n\right)$ is the number of $1$'s in the binary
representation of $n$.

\begin{proof}[Proof of lemma \ref{Lemma: perm mod powers of 2}]
 $ $

\paragraph*{Part \ref{enu 1: perm mod power of 2}}

We prove \ref{eq: permanent mod powers of 2} by induction on $n$.
For $n=1$ we have $n=2^{1}-1$, i.e. $s=1$, and for every odd integer
$x$: $\perm\left(x\right)=x\equiv1\left(\mbox{mod }2\right)$. Let
$J$ denote the $n\times n$ matrix whose all entries are $1$'s.
Since $\perm\left(J\right)=n!$, the claim holds for $J$ according
to Fact \ref{lem: 2-power dividing n!}. Every other odd $n\times n$
matrix is obtained from $J$ by a finite number of steps of the form
``add/subtract $2$ from a given entry of the matrix'', and it is
therefore sufficient to prove that if an odd matrix $x'$ satisfies
\ref{eq: permanent mod powers of 2}, then a matrix $x$ obtained
from $x'$ by adding $\pm2$ to the $\left(i,j\right)$ entry of $x'$,
also satisfies \ref{eq: permanent mod powers of 2}. Recall that $M_{i}^{j}\left(x'\right)$
denotes the matrix obtained from $x'$ by deleting its $i$-th row
and $j$-th column, and observe that:
\begin{equation}
\perm\left(x\right)=\perm\left(x'\right)\pm2\cdot\perm\left(M_{i}^{j}\left(x'\right)\right).\label{eq: perm step}
\end{equation}
By the induction hypothesis, $M_{i}^{j}\left(x'\right)$ satisfies
\ref{eq: permanent mod powers of 2}. We distinguish between three
different cases.
\begin{itemize}
\item If $n=2^{s}-1$, then $n-1=2^{s}-2\in\left[2^{s-1},2^{s}-1\right)$.
Since $x'$ and $M_{i}^{j}\left(x'\right)$ satisfy \ref{eq: permanent mod powers of 2},
we have
\begin{eqnarray*}
\perm\left(x'\right) & \equiv & 2^{n-s}\left(\mbox{mod }2^{n-s+1}\right)\\
\perm\left(M_{i}^{j}\left(x'\right)\right) & \equiv & 0\left(2^{\left(n-1\right)-\left(s-1\right)}\right)\equiv0\left(\mbox{mod }2^{n-s}\right).
\end{eqnarray*}
In particular, by \ref{eq: perm step}:
\[
\perm\left(x\right)\equiv2^{n-s}\left(\mbox{mod }2^{n-s+1}\right)+0\left(\mbox{mod }2^{n-s+1}\right)\equiv2^{n-s}\left(\mbox{mod }2^{n-s+1}\right),
\]
as desired.
\item If $n=2^{s}$, then $n-1=2^{s}-1$. Since $x'$ and $M_{i}^{j}\left(x'\right)$
satisfy \ref{eq: permanent mod powers of 2}, we have
\begin{eqnarray*}
\perm\left(x'\right) & \equiv & 0\left(\mbox{mod }2^{n-s}\right)\\
\perm\left(M_{i}^{j}\left(x'\right)\right) & \equiv & 2^{n-1-s}\left(\mbox{mod }2^{n-s}\right).
\end{eqnarray*}
By \ref{eq: perm step},
\[
\perm\left(x\right)\equiv0\left(\mbox{mod }2^{n-s}\right)+0\left(\mbox{mod }2^{n-s}\right)\equiv0\left(\mbox{mod }2^{n-s}\right),
\]
as desired.
\item If $2^{s}+1\leq n\leq2^{s+1}-2$, then both $n$ and $n-1$ are in
$\left[2^{s},2^{s+1}-2\right]$, and since $x'$ and $M_{i}^{j}\left(x'\right)$
satisfy \ref{eq: permanent mod powers of 2}, we have
\begin{eqnarray*}
\perm\left(x'\right) & \equiv & 0\left(\mbox{mod }2^{n-s}\right)\\
\perm\left(M_{i}^{j}\left(x'\right)\right) & \equiv & 0\left(\mbox{mod }2^{n-s}\right).
\end{eqnarray*}
In particular, by \ref{eq: permanent mod powers of 2}:
\[
\perm\left(x\right)\equiv0\left(\mbox{mod }2^{n-s}\right)+0\left(\mbox{mod }2^{n-s}\right)\equiv0\left(\mbox{mod }2^{n-s}\right),
\]
which concludes the proof of part \ref{enu 1: perm mod power of 2}.
\end{itemize}

\paragraph*{Part \ref{enu 2: both cases occur}.}

Assume first that $n=2^{s}$. Let $y\in\mbox{Mat}_{n-1}\left(\ZZ\right)$
be any odd matrix; by part \ref{enu 1: perm mod power of 2}, $\perm\left(y\right)\equiv2^{n-s-1}\left(\mbox{mod }2^{n-s}\right)$.
Consider the $n\times n$ matrices
\[
x=\left[\begin{array}{c|ccc}
3 & 1 & \cdots & 1\\
\hline 1\\
\vdots &  & y\\
1
\end{array}\right],\, x'=\left[\begin{array}{c|ccc}
1 & 1 & \cdots & 1\\
\hline 1\\
\vdots &  & y\\
1
\end{array}\right].
\]
Both $\perm\left(x\right)$ and $\perm\left(x'\right)$ are congruent
to $0\left(\mbox{mod }2^{n-s}\right)$, by part \ref{enu 1: perm mod power of 2};
we claim that one of them is congruent to $2^{n-s}\left(\mbox{mod }2^{n-s+1}\right)$,
and the other is congruent to $0\left(\mbox{mod }2^{n-s+1}\right)$.
This is due to the fact that
\begin{eqnarray*}
\perm\left(x\right) & = & \perm\left(x'\right)+2\cdot\perm\left(y\right)\\
 & \equiv & \perm\left(x'\right)+2^{n-s}\left(\mbox{mod }2^{n-s+1}\right).
\end{eqnarray*}
This proves the claim of part \ref{enu 2: both cases occur} for $n=2^{s}$,
and we proceed by induction on $n$ in the interval $\left[2^{s},2^{s+1}-1\right]$.
Let $2^{s}+1\leq n<2^{s+1}-1$. By the induction hypothesis, there
exists $\tilde{x}^{1}\in\mbox{Mat}_{n-1}\left(\ZZ\right)$ such that
$\perm\left(\tilde{x}^{1}\right)\equiv2^{n-1-s}\left(2^{n-s}\right)$.
Let
\[
x=\left[\begin{array}{c|ccc}
3 & 1 & \cdots & 1\\
\hline 1\\
\vdots &  & \tilde{x}^{1}\\
1
\end{array}\right],\, x'=\left[\begin{array}{c|ccc}
1 & 1 & \cdots & 1\\
\hline 1\\
\vdots &  & \tilde{x}^{1}\\
1
\end{array}\right].
\]
As before, $\perm\left(x\right)\equiv\perm\left(x'\right)\equiv0\left(\mbox{mod }2^{n-s}\right)$
and
\begin{eqnarray*}
\perm\left(x\right) & = & \perm\left(x'\right)+2\cdot\perm\left(\tilde{x}^{1}\right)\\
 & \equiv & \perm\left(x'\right)+2^{n-s}\left(\mbox{mod }2^{n-s+1}\right);
\end{eqnarray*}
hence one one of the matrices $x$, $x'$ is congruent to $2^{n-s}\left(\mbox{mod }2^{n-s+1}\right)$,
and the other is congruent to $0\left(\mbox{mod }2^{n-s+1}\right)$.
\end{proof}

\begin{proof}[Proof of Theorem \ref{thm: permanent variety}]
For an $n\times n$ matrix of variables $x$, we let $\left(\xi_{1},\ldots,\xi_{n}\right)$
denote the first row of $x$, and let $y$ denote the $\left(n-1\right)\times n$
matrix obtained by deleting the first row of $x$. We write $K_{j}\left(y\right)$
for the permanent of the $\left(n-1\right)\times\left(n-1\right)$
matrix obtained by deleting the $j$-th column of $y$, i.e. $K_{j}\left(y\right)=\mbox{perm}\left(M^{j}\left(y\right)\right)$.
Then, the polynomial $\inpol\left(x\right)=\perm\left(x\right)$ is
of the form \ref{eq: def of pol Delta}
considered in Theorems \ref{thm: General Thm} and \ref{thm: Weak General Thm}:
\[
\perm\left(x\right)=K_{1}\left(y\right)x_{1}+K_{2}\left(y\right)x_{2}+\cdots+K_{n}\left(y\right)x_{n}.
\]
Since the permanent polynomial differs from the determinant only by
the signs, it also has the property that the permanents of two matrices
that differ only by a single row or column are intertwined. Hence
every pair $\left(K_{i}\left(y\right),K_{j}\left(y\right)\right)$
with $i\neq j$ is intertwined.

We are left to verify the parity conditions on $\perm\left(x\right)$
and the coefficients $K_{i}\left(y\right)$. We apply Lemma \ref{Lemma: perm mod powers of 2}
for three different cases.

If $n=2^{s}-1$, then every $M^{j}\left(y\right)$ is an odd square
matrix of order $n-1=2^{s}-2\in\left[2^{s-1},2^{s}-1\right)$ and
in particular
\begin{eqnarray*}
\perm\left(x\right) & \equiv & 2^{n-s}\left(\mbox{mod }2^{n-s+1}\right)\\
K_{j}\left(y\right)=\perm\left(M^{j}\left(y\right)\right) & \equiv & 0\left(2^{\left(n-1\right)-\left(s-1\right)}\right)\equiv0\left(\mbox{mod }2^{n-s}\right).
\end{eqnarray*}
The conditions of Theorem \ref{thm: Weak General Thm} are met  with
$\maxexp=n-s$, $m\equiv2^{n-s}\left(\mbox{mod }2^{n-s+1}\right)$.

If $n=2^{s}$, then every $M^{j}\left(y\right)$ is an odd square
matrix of order $n-1=2^{s}-1$ and in particular
\begin{eqnarray*}
\perm\left(x\right) & \equiv & 0\left(\mbox{mod }2^{n-s}\right)\\
K_{j}\left(y\right)=\perm\left(M^{j}\left(y\right)\right) & \equiv & 2^{n-s-1}\left(2^{n-s}\right).
\end{eqnarray*}
The conditions of Theorem \ref{thm: Weak General Thm} are met  with
$\maxexp=n-s$, $m\equiv0\left(\mbox{mod }2^{n-s}\right)$.

If $2^{s}+1\leq n<2^{s+1}-1$, every $M^{j}\left(y\right)$ is an
odd square matrix of order $n-1\in\left[2^{s},2^{s+1}-2\right)$ and
in particular
\begin{eqnarray*}
\perm\left(x\right) & \equiv & 0\left(\mbox{mod }2^{n-s}\right)\\
K_{j}\left(y\right)=\perm\left(M^{j}\left(y\right)\right) & \equiv & 0\left(2^{n-1-s}\right).
\end{eqnarray*}
The conditions of Theorem \ref{thm: Weak General Thm} are met  with
$\maxexp=n-s$, $m\equiv0\left(\mbox{mod }2^{n-s}\right)$.
\end{proof}

\appendix

\section{Appendix: Proof of Lemma \ref{lem: create 2-coprime}\label{sec: Appendix}}

The goal of this section is to prove Lemma \ref{lem: create 2-coprime}.
We start by recalling Dirichlet's Theorem.
\begin{thm}[Dirichlet theorem on arithmetic progressions]
\label{thm: Dirichlet theorem}Let $\a$ and $\b$ be co-prime integers.
Then there are infinitely many primes in the arithmetic progression
$\left\{ \a+\b\ZZ\right\} $. In other words, there are infinitely
many primes that are congruent to $\alpha$ modulo $\beta$.
\end{thm}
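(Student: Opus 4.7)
The plan is to follow the classical analytic proof via Dirichlet characters and Dirichlet $L$-functions. First I would introduce the group $\widehat{G}$ of characters on $G = (\ZZ/\b\ZZ)^\times$, each $\chi \in \widehat{G}$ extended to $\ZZ$ by setting $\chi(n) = 0$ when $\gcd(n,\b) > 1$. Attached to each $\chi$ is the $L$-function
\[
L(s,\chi) \;=\; \sum_{n=1}^{\infty} \frac{\chi(n)}{n^s} \;=\; \prod_{p} \bigl(1 - \chi(p) p^{-s}\bigr)^{-1},
\]
where the Euler product is valid for $\mathrm{Re}(s) > 1$ because $\chi$ is completely multiplicative. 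Taking a branch of the logarithm and expanding $-\log(1-x)$ as a power series, one obtains $\log L(s,\chi) = \sum_{p} \chi(p) p^{-s} + R(s,\chi)$, where $R(s,\chi)$ is bounded as $s \to 1^+$ since it comes from the higher prime-power terms.

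Next I would use the orthogonality relations for characters, namely
\[
\sum_{\chi \in \widehat{G}} \overline{\chi(\a)}\, \chi(n) \;=\; \begin{cases} \varphi(\b) & \text{if } n \equiv \a \pmod{\b},\\ 0 & \text{otherwise,} \end{cases}
\]
to extract the primes in the target residue class:
\[
\sum_{\substack{p \text{ prime}\\ p \equiv \a \,(\b)}} \frac{1}{p^s} \;=\; \frac{1}{\varphi(\b)} \sum_{\chi \in \widehat{G}} \overline{\chi(\a)} \log L(s,\chi) \;+\; O(1)
\]
as $s \to 1^+$. For the principal character $\chi_0$, the function $L(s,\chi_0)$ differs from $\zeta(s)$ only by finitely many Euler factors and therefore has a simple pole at $s = 1$, so $\log L(s,\chi_0) \to +\infty$. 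For every non-principal $\chi$, partial summation together with the bounded partial sums $\sum_{n \le N} \chi(n)$ shows that $L(s,\chi)$ extends to a holomorphic function in a neighbourhood of $s = 1$. Granting that $L(1,\chi) \neq 0$ for each such $\chi$, one concludes that the right-hand side diverges to $+\infty$, which forces infinitely many primes $p \equiv \a \pmod{\b}$.

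The main obstacle is precisely the non-vanishing statement $L(1,\chi) \neq 0$ for every non-principal $\chi$. The product $\zeta_\b(s) := \prod_{\chi} L(s,\chi)$ admits a Dirichlet series expansion with non-negative coefficients (its $n$-th coefficient counts, with a factor from the order of $n$ in $G$, representations of $n$ by principal forms), and a pole at $s = 1$ of order $1$ coming from $L(s,\chi_0)$. If some non-principal $L(1,\chi)$ vanished, the zero would cancel this pole, making $\zeta_\b$ entire. For complex characters $\chi$ (with $\chi \neq \overline{\chi}$) the zero at $s=1$ would be paired with a zero of $L(s,\overline{\chi})$ and one derives a contradiction by applying Landau's theorem on Dirichlet series with non-negative coefficients (the series would have to converge everywhere, yet diverges at a real point). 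The genuinely hard subcase is that of a \emph{real} non-principal character $\chi$ (so $\chi^2 = \chi_0$); here one argues separately, either via Landau's theorem applied to $\zeta(s) L(s,\chi)$ and the positivity of the coefficients of its Euler product $\prod_p (1 - p^{-s})^{-1}(1 - \chi(p)p^{-s})^{-1}$, or equivalently via Mertens' identity on sums over the quadratic form associated with $\chi$. Once this non-vanishing is secured, the divergence argument above completes the proof.
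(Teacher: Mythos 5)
There is nothing to compare against here: the paper does not prove this statement at all. Dirichlet's theorem is recalled in the Appendix purely as classical background, and is then used as a black box to deduce Fact \ref{Dirichlet expansion} (primes in an intersection of two progressions, via the Chinese Remainder Theorem) and ultimately Lemma \ref{lem: create 2-coprime}. Your proposal, by contrast, actually sketches a proof, and it is the standard analytic one: characters of $(\ZZ/\b\ZZ)^{\times}$, the $L$-functions $L(s,\chi)$ with their Euler products, orthogonality to isolate the residue class $\a \bmod \b$, the simple pole of $L(s,\chi_0)$ at $s=1$, holomorphy of the non-principal $L(s,\chi)$ near $s=1$ via partial summation, and the key non-vanishing $L(1,\chi)\neq 0$, handled by positivity/Landau-type arguments (for complex $\chi$ via the product $\prod_\chi L(s,\chi)$, for real $\chi$ via $\zeta(s)L(s,\chi)$). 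This outline is correct and identifies the genuine difficulty in the right place; the only loosenesses are minor (e.g.\ the description of the coefficients of $\prod_\chi L(s,\chi)$ is imprecise, and ``converge everywhere'' should be ``converge in $\mathrm{Re}(s)>0$,'' which already suffices since the series visibly diverges at a small positive real $s$). So your write-up is a legitimate proof sketch of a result the paper deliberately imports without proof; within the logic of the paper the appropriate treatment is simply to cite it, as the authors do, but nothing in your argument conflicts with how the theorem is used downstream.
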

The following is a simple consequence of Dirichlet's Theorem and the
Chinese Remainder Theorem:
\begin{fact}
\label{Dirichlet expansion}Let $\a_{1},\a_{2},\b_{1},\b_{2}\in\mathbb{Z}$
be such that
\[
\gcd\left(\b_{1},\b_{2}\right)=\gcd\left(\a_{1},\b_{1}\right)=\gcd\left(\a_{2},\b_{2}\right)=1.
\]
 Then there are infinitely many primes in the intersection of the
arithmetic progressions $\left\{ \a_{1}+\b_{1}\ZZ\right\} $ and $\left\{ \a_{2}+\b_{2}\ZZ\right\} $. \end{fact}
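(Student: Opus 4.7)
The plan is to reduce the claim to a single application of Dirichlet's theorem (Theorem \ref{thm: Dirichlet theorem}) by fusing the two congruence conditions into one via the Chinese Remainder Theorem, which is applicable precisely because $\gcd(\b_{1},\b_{2})=1$. First I would produce an integer $c$ satisfying simultaneously $c\equiv\a_{1}\,(\mbox{mod }\b_{1})$ and $c\equiv\a_{2}\,(\mbox{mod }\b_{2})$; CRT guarantees such a $c$, unique modulo $\b_{1}\b_{2}$. The intersection of the two arithmetic progressions $\{\a_{1}+\b_{1}\ZZ\}$ and $\{\a_{2}+\b_{2}\ZZ\}$ then coincides exactly with the single progression $\{c+\b_{1}\b_{2}\ZZ\}$.

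Next I would verify the hypothesis of Dirichlet's theorem for this combined progression, namely $\gcd(c,\b_{1}\b_{2})=1$. From $c\equiv\a_{1}\,(\mbox{mod }\b_{1})$ together with $\gcd(\a_{1},\b_{1})=1$ I get $\gcd(c,\b_{1})=1$, and by the symmetric computation $\gcd(c,\b_{2})=1$; combined with $\gcd(\b_{1},\b_{2})=1$, these yield $\gcd(c,\b_{1}\b_{2})=1$. Dirichlet then produces infinitely many primes in $\{c+\b_{1}\b_{2}\ZZ\}$, which is exactly the desired intersection of the two given progressions. No substantive obstacle arises — the argument is a bookkeeping exercise in the Chinese Remainder Theorem coupled with the standard coprimality check needed to invoke Dirichlet.
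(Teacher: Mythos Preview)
Your proof is correct and essentially identical to the paper's: both use the Chinese Remainder Theorem to collapse the two progressions into a single one modulo $\b_{1}\b_{2}$ (the paper writes $\lcm(\b_{1},\b_{2})$, which equals $\b_{1}\b_{2}$ here), verify coprimality of the common residue with the modulus via the two given $\gcd$ conditions, and then invoke Dirichlet.
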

\begin{proof}
By the Chinese Remainder Theorem there exists $x\in\mathbb{Z}$ in
the intersection of the arithmetic progressions $\left\{ \a_{1}+\b_{1}\ZZ\right\} $
and $\left\{ \a_{2}+\b_{2}\ZZ\right\} $, since $\gcd\left(\b_{1},\b_{2}\right)=1$.
Write
\[
x=\a_{1}+k_{1}\b_{1}=\a_{2}+k_{2}\b_{2}.
\]
If $\ell=\lcm\left(\b_{1},\b_{2}\right)$, then every element in the
arithmetic progression $\left\{ x+\ell\ZZ\right\} $ is contained
in $\left\{ \a_{1}+\b_{1}\ZZ\right\} \cap\left\{ \a_{2}+\b_{2}\ZZ\right\} $.
By Dirichlet theorem, there are infinitely many primes in $\left\{ x+\ell\ZZ\right\} $
if $\gcd\left(x,\ell\right)=1$ --- which is indeed the case, since:
\[
\gcd\left(x,\b_{1}\right)=\gcd\left(\a_{1}+k_{1}\b_{1},\b_{1}\right)=\gcd\left(\a_{1},\b_{1}\right)=1
\]
and
\[
\gcd\left(x,\b_{2}\right)=\gcd\left(\a_{1}+k_{2}\b_{2},\b_{2}\right)=\gcd\left(\a_{2},\b_{2}\right)=1.
\]

\end{proof}

It is well known that the $\gcd$ of a finite set of integers $\a_{1},\ldots,\a_{r}$
can be presented as an integral combination of $\a_{1},\ldots,\a_{r}$;
the content of the following claim is that the integral coefficients
in this combination can be chosen to satisfy some desired congruence.

\begin{claim}
\label{gcd with coprime integers}Let $\a_{1},\ldots,\a_{r}\in\ZZ$
such that $\gcd\left(\a_{1},\ldots,\a_{r}\right)=d$, and let $p\neq2$
a prime that does not divide $d$. Then there exist $t_{1},\ldots,t_{r}\in\ZZ$
such that $p\nmid t_{1},\ldots,p\nmid t_{r}$ and $t_{1}\a_{1}+\ldots+t_{r}\a_{r}=d$.\end{claim}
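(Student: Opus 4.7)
The approach is to translate the problem to a question over $\mathbb{F}_p$. Start from any Bezout identity $s_1 \a_1 + \cdots + s_r \a_r = d$, which exists since $\gcd(\a_1,\ldots,\a_r) = d$; then the full set of integer solutions to $\sum_i t_i \a_i = d$ is the affine lattice $s + L$, where $L = \{t \in \ZZ^r : \sum_i t_i \a_i = 0\}$. The task reduces to exhibiting some $t \in s + L$ whose every coordinate is nonzero modulo $p$.

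The key technical step is to identify the image of $s + L$ in $\mathbb{F}_p^r$ with the entire affine hyperplane $H := \{\bar t \in \mathbb{F}_p^r : \sum_i \bar\a_i \bar t_i = \bar d\}$. The nontrivial inclusion is that every $\bar u \in H$ admits an integer lift $t \in s + L$. Pick an arbitrary integer lift $u$ of $\bar u$; then $\sum_i u_i \a_i = d + pk$ for some $k \in \ZZ$. Seeking $t = u + p w$ with $\sum_i t_i \a_i = d$ amounts to solving $\sum_i w_i \a_i = -k$, which is possible iff $d \mid k$. But $d$ divides both $\sum_i u_i \a_i$ and $d$, hence divides $pk$; combined with $\gcd(d,p) = 1$, this forces $d \mid k$. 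This is precisely where the hypothesis $p \nmid d$ is used, and it is the main obstacle --- without it the image of $s + L$ in $\mathbb{F}_p^r$ is strictly smaller than $H$ and the whole strategy collapses.

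Once this reduction is in place, it remains to exhibit a point of $H$ with all coordinates in $\mathbb{F}_p^\times$. The case $r = 1$ is trivial, so assume $r \geq 2$. Since $\bar d \neq 0$, some $\bar \a_i$ is nonzero; after reindexing take $\bar \a_1 \neq 0$, so that $\bar t_1 = \bar \a_1^{-1}\bigl(\bar d - \sum_{i \geq 2} \bar \a_i \bar t_i\bigr)$ is determined by the remaining coordinates. If $\bar \a_i = 0$ for all $i \geq 2$, any choice of $\bar t_2,\ldots,\bar t_r \in \mathbb{F}_p^\times$ gives $\bar t_1 = \bar\a_1^{-1} \bar d \in \mathbb{F}_p^\times$. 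Otherwise take $\bar \a_2 \neq 0$; for each fixed $(\bar t_3,\ldots,\bar t_r) \in (\mathbb{F}_p^\times)^{r-2}$, at most one value $\bar t_2^{\star} \in \mathbb{F}_p$ would force $\bar t_1 = 0$, and since $|\mathbb{F}_p^\times| = p - 1 \geq 2$ (this is where $p \neq 2$ is used), a valid $\bar t_2 \in \mathbb{F}_p^\times \setminus \{\bar t_2^{\star}\}$ exists. Lifting this $\bar t \in H$ via the previous step produces integers $t_1,\ldots,t_r$ with $\sum_i t_i \a_i = d$ and $t_i \not\equiv 0 \pmod p$ for every $i$, which is the desired conclusion.
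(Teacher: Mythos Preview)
Your proof is correct, and it takes a genuinely different route from the paper's argument. The paper proceeds by direct manipulation: starting from an arbitrary Bezout identity $\mu_1\a_1+\cdots+\mu_r\a_r=d$, it locates an index $r$ with $p\nmid\mu_r\a_r$, then repairs the ``bad'' coefficients (those divisible by $p$) by adding $\a_r$ or $2\a_r$ to each and compensating in the $r$-th slot; a short case analysis, using $p\neq 2$ to ensure $p\nmid 2\a_r$, shows that one of these two explicit modifications works.

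Your approach is more structural. You first prove that the reduction mod~$p$ of the solution set $\{t:\sum t_i\a_i=d\}$ is exactly the affine hyperplane $H=\{\bar t:\sum\bar\a_i\bar t_i=\bar d\}\subset\mathbb{F}_p^r$; the lifting step uses $p\nmid d$ cleanly and isolates it as the sole arithmetic obstruction. Then the problem becomes purely combinatorial over $\mathbb{F}_p$: find a point of $H$ with no zero coordinate, which you dispatch by a short parameter count using $p\geq 3$. This separation of concerns makes the roles of the two hypotheses $p\nmid d$ and $p\neq 2$ more transparent than in the paper's ad~hoc construction, and the argument generalizes immediately (e.g.\ to avoiding any prescribed single residue in each coordinate, not just zero). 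The paper's proof, by contrast, is shorter to write down and entirely self-contained, requiring no passage to $\mathbb{F}_p$.
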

\begin{proof}
Let $\mu_{1},\ldots,\mu_{r}\in\ZZ$ such that
\[
\mu_{1}\a_{1}+\ldots+\mu_{r}\a_{r}=d.
\]
Since $p\nmid d$, there exists $i\in\left\{ 1,\ldots,r\right\} $
such that $p\nmid\mu_{i}\a_{i}$. Assume $p\nmid\mu_{r}\a_{r}$, namely
$p\nmid\mu_{r}$ and $p\nmid\a_{r}$. If $p\nmid\mu_{i}$ for all
$i=1,\ldots r-1$, we are done. Otherwise, rearrange the indexes such
that $p\mid\mu_{i}$ for $i=1,\ldots k$, for $1\leq k<r$. Consider
the following presentation of $d$ is an integral combination of $\a_{1},\ldots,\a_{r}$:
\[
\left(\mu_{1}+\a_{r}\right)\a_{1}+\ldots+\left(\mu_{k}+\a_{r}\right)\a_{k}+\mu_{k+1}\a_{k+1}+\ldots\mu_{r-1}\a_{r-1}+\left(\mu_{r}-\a_{1}-\ldots-\a_{k}\right)\a_{r}=d.
\]
Note that $p\nmid\mu_{i}+\a_{r}$ for all $i=1,\ldots,k$, since
$p\mid\mu_{i}$ and $p\nmid\a_{r}$. By assumption $p\nmid\mu_{k+1},\ldots,p\nmid\mu_{r-1}$.
If $p\nmid\mu_{r}-\a_{1}-\ldots-\a_{k}$, we are done. Otherwise,
consider the following presentation of $d$ is an integral combination
of $\a_{1},\ldots,\a_{r}$:
\[
\left(\mu_{1}+2\a_{r}\right)\a_{1}+\ldots+\left(\mu_{k}+2\a_{r}\right)\a_{k}+\mu_{k+1}\a_{k+1}+\ldots\mu_{r-1}\a_{r-1}+\left(\mu_{r}-2\a_{1}-\ldots-2\a_{k}\right)\a_{r}=d.
\]
Note that $p\nmid\mu_{i}+2\a_{r}$ for all $i=1,\ldots,k$, since
$p\mid\mu_{i}$ and $p\nmid2\a_{r}$ (as $p\neq2$ and $p\nmid\a_{r}$).
By assumption $p\nmid\mu_{k+1},\ldots,p\nmid\mu_{r-1}$. Finally,
$p\nmid\mu_{r}-2\a_{1}-\ldots-2\a_{k}$; indeed, if $p$ divides both
$\mu_{r}-2\a_{1}-\ldots-2\a_{k}$ and $\mu_{r}-\a_{1}-\ldots-\a_{k}$,
then it must divide $\a_{1}+\ldots+\a_{k}$, and therefore  $p\mid\mu_{r}$,
a contradiction.
\end{proof}

\begin{proof}[Proof of Lemma \ref{lem: create 2-coprime}]
 If $\ga$ is a power of $2$, the claim is trivial; indeed, since
$\gcd\left(q_{i},2^{s_{i}}\right)=1$, the arithmetic progression
$\left\{ q_{i}+2^{s_{i}}\ZZ\right\} $ contains infinitely many primes
(by Dirichlet's Theorem).

Otherwise, let $p$ be an odd prime factor of $\ga$, and $\nu_{p}$
be an integer satisfying
\begin{eqnarray*}
p & \nmid & \nu_{p},\\
p & \nmid & \left(\nu_{p}\cdot\gcd\left(\a_{1},\ldots,\a_{n}\right)+\b\right).
\end{eqnarray*}
Such $\nu_{p}$ exists, because for any integer $\dl_{p}$ for which
\[
\dl_{p}\not\equiv0,\b\left(\mbox{mod }p\right)
\]
we can choose
\[
\nu_{p}\equiv\frac{\dl_{p}-\b}{\gcd\left(\a_{1},\ldots,\a_{n}\right)}\left(\mbox{mod }p\right)
\]
(since $\gcd\left(\a_{1},\ldots,\a_{n}\right)\not\equiv0\left(\mbox{mod }p\right)$),
and then
\[
\nu_{p}\not\equiv0\left(\mbox{mod }p\right)
\]
and
\[
\nu_{p}\cdot\gcd\left(\a_{1},\ldots,\a_{n}\right)+\b\equiv\dl_{p}\not\equiv0\left(\mbox{mod }p\right).
\]
By Claim \ref{gcd with coprime integers}, there exist $n$ integers
$t_{1}^{\left(p\right)},t_{2}^{\left(p\right)},...,t_{n}^{\left(p\right)}$
co-prime to $p$ such that
\[
t_{1}^{\left(p\right)}\a_{1}+t_{2}^{\left(p\right)}\a_{2}+...+t_{n}^{\left(p\right)}\a_{n}=\gcd\left(\a_{1},\ldots,\a_{n}\right).
\]
Then,
\begin{eqnarray*}
\nu_{p}t_{1}^{\left(p\right)}\a_{1}+\nu_{p}t_{2}^{\left(p\right)}\a_{2}+...+\nu_{p}t_{n}^{\left(p\right)}\a_{n}+\b & = & \nu_{p}\cdot\gcd\left(\a_{1},\ldots,\a_{n}\right)+\b\\
 & \not\equiv & 0\left(\mbox{mod }p\right);
\end{eqnarray*}
in particular, if
\[
\left(y_{1},\ldots,y_{n}\right)\equiv\left(\nu_{p}t_{1}^{\left(p\right)},\ldots,\nu_{p}t_{n}^{\left(p\right)}\right)\left(\mbox{mod }p\right),
\]
then
\begin{eqnarray}
 &  & \a_{1}y_{1}+\ldots+\a_{n}y_{n}+\b\nonumber \\
 & \equiv & \a_{1}\cdot\nu_{p}t_{1}^{\left(p\right)}+\a_{2}\cdot\nu_{p}t_{2}^{\left(p\right)}+...+\a_{n}\cdot\nu_{p}t_{n}^{\left(p\right)}+\b\left(\mbox{mod }p\right)\label{eq: p does not divide f(y)}\\
 & \not\equiv & 0\left(\mbox{mod }p\right).\nonumber
\end{eqnarray}
Finally, consider the following set in $\ZZ^{n}$:
\begin{equation}
\left\{ \left(y_{1},\ldots,y_{n}\right)\left|\begin{split} & y_{i}\mbox{ is prime for all \ensuremath{i}}\\
 & y_{i}\equiv q_{i}\left(\mbox{mod }2^{s_{i}}\right)\mbox{ for all \ensuremath{i}}\\
 & \left(y_{1},\ldots,y_{n}\right)\equiv\left(\nu_{p}t_{1}^{\left(p\right)},\ldots,\nu_{p}t_{n}^{\left(p\right)}\right)\left(\mbox{mod }p\right)\mbox{ for all }2\neq p\mid\ga
\end{split}
\right.\right\} .\label{eq: all that modulo}
\end{equation}
Observe that every $y_{i}$ assumes prime values in the following
finite intersection of arithmetic progressions:
\[
\left(\bigcap_{2\neq p\mid\ga}\left\{ \nu_{p}t_{i}^{\left(p\right)}+\mathbb{N}\cdot p\right\} \right)\cap\left\{ q_{i}+\mathbb{N}\cdot2^{s_{i}}\right\} ,
\]
and this intersection contains infinitely many primes, by Fact \ref{Dirichlet expansion};
indeed, $\gcd\left(\nu_{p}t_{i}^{\left(p\right)},p\right)=1$ for
every $p$ and every $i=1,\ldots,n$, $\gcd\left(q_{i},2^{s_{i}}\right)=1$,
 and the elements of $\left\{ \left\{ p:2\neq p\mid\ga\right\} ,\left\{ 2^{s_{i}}\right\} \right\} $
are pairwise co-prime. Hence, every $y_{i}$ assumes infinitely many
values, and the set \ref{eq: all that modulo} is Zariski-dense in
$\F^{n}$.

We conclude the proof by observing that the set \ref{eq: all that modulo}
is contained in the set \ref{eq:create 2-coprime}. First of all,
if $y_{i}\equiv q_{i}\left(\mbox{mod }2^{s_{i}}\right)$ and $q_{i}$
is odd, then in particular $y_{i}$ is odd. By equation \ref{eq: p does not divide f(y)},
every odd prime factor $p$ of $\ga$ does not divide $\linpol$.
\end{proof}

\bibliographystyle{plain}

\end{document}